\theoremstyle{plain}
\newtheorem{theorem}{Theorem}[section]
\newtheorem{remark}{Remark}[section]
\numberwithin{equation}{section}
\newcommand{\N}{\mathbb{N}}
\newcommand{\R}{\mathbb{R}}
\renewcommand{\d}{\mathrm{d}}
\title{Numerical Reconstruction in Magnetic Particle Imaging}
\author{Tobias Kluth\thanks{Center for Industrial Mathematics, University of Bremen, Bibliothekstr. 5, 28357 Bremen, Germany (\texttt{tkluth@math.uni-bremen.de})} \and
Bangti Jin\thanks{Department of Computer Science, University College London, Gower Street, London WC1E 6BT, UK (\texttt{b.jin@ucl.ac.uk, bangti.jin@gmail.com})}}
\begin{document}
\maketitle

\begin{abstract}
Magnetic particle imaging (MPI) is a medical imaging modality of recent origin, and it exploits the nonlinear
magnetization phenomenon to recover the spatially dependent concentration of the nanoparticles. Currently, image
reconstruction in MPI is frequently carried out by standard Tikhonov regularization with nonnegativity constraint,
which is then minimized by a Kaczmarz type method. In this work, we revisit several issues in the numerical
reconstruction in MPI from the perspective of modern inverse theory, i.e., the choice of data fidelity, and
choosing a suitable regularization parameter and accelerating Kaczmarz iteration via randomized singular value
decomposition. These algorithmic tricks are straightforward to implement and easy to incorporate in existing
reconstruction algorithms. Their significant potentials are illustrated by extensive numerical experiments on a
publicly available dataset.\\
{\bf Keywords}: magnetic particle imaging, reconstruction, randomized singular value decomposition
\end{abstract}

%
%
%

\section{Introduction}
Magnetic particle imaging (MPI) is a relatively new medical imaging modality \cite{Gleich2005}. It exploits the nonlinear magnetization
behavior of ferromagnetic nanoparticles in an applied magnetic field to reconstruct the spatially dependent concentration of nanoparticles.
The experimental setup is as follows. A static magnetic field (selection field), given by a gradient field, generates a field
free point or a field free line. Its superposition with a spatially homogeneous but time-dependent field (drive field) moves
the field free point / line along a predefined trajectory defining the field-of-view. The most common trajectory is the so-called
Lissajous curve. The change of the applied field induces a
change of the nanoparticle magnetization, which can be measured and used to recover the concentration of the nanoparticles.

MPI has a number of distinct features: high data acquisition speed, high sensitivity, potentially high spatial resolution and free from the need
of harmful radiation. This makes MPI especially attractive for {\it in-vivo} applications, and the list of potential medical
applications is long and growing. The potential for imaging blood flow was demonstrated  in {\it in-vivo}
experiments using a healthy mouse \cite{weizenecker2009three}. The feasibility of a circulating tracer for long-term
monitoring was recently investigated \cite{khandhar2017evaluation}. The high temporal resolution of MPI is shown to be suitable
for potential flow estimation \cite{franke2017}, tracking medical instruments \cite{haegele2012magnetic}
and tracking and guiding instruments for angioplasty \cite{Salamon:2016cz}.
Further promising applications of MPI include cancer detection \cite{Yu2017} and cancer treatment by hyperthermia
\cite{murase2015usefulness}.

Hence, the numerical reconstruction in MPI is of enormous practical importance, and has received much attention. In the literature, there are
mainly two different groups of approaches, i.e., data-based v.s. model based, dependent of the description of the forward map. The data-based
approach employs experimentally calibrated forward operators, whereas the model-based approach employs mathematical models to
describe the physical process. Currently, the former delivers the state of art numerical reconstructions. In either case, MPI
reconstruction techniques often boil down to solving a linear inverse problem using standard regularization techniques (see, e.g.,
the monographs \cite{EnglHankeNeubauer:1996,SchusterKlatenbacher:2012,ItoJin:2015}). The most popular idea is standard Tikhonov
regularization with nonnegativity constraint, which is then minimized by Kaczmarz iteration \cite{weizenecker2009three,Knopp2010e,
RahmerHalkola:2015}. The Kaczmarz method \cite{Kaczmarz:1937} is very attractive for large volume of data, due to its low
operational complexity per iteration.  This idea was also combined with a preconditioning (row normalization) and row exclusion
to improve the image reconstruction \cite{Knopp2010e}. Only very recently, more advanced variational regularization techniques,
e.g., nonnegative fused lasso penalty \cite{Storath:2017}, total least-squares approach \cite{KluthMaass:2017}, approximation
error modeling \cite{BrandtSeppanen:2018} and deep image prior \cite{DittmerKluth:2018}, have been proposed and evaluated. The
total variation penalty allows recovering piecewise constant concentrations accurately. The approaches in \cite{KluthMaass:2017,
BrandtSeppanen:2018} allow incorporating model errors into the reconstruction process for enhanced imaging quality. We refer to
the recent survey \cite[Section 6]{KnoppGdaniec:2017} for an overview of other reconstruction methods. It is worth noting that
all these reconstruction techniques can be very expensive for three-dimensional problems, where the available datasets are of
relatively large volume. Therefore, there is a significant demand in developing fast MPI image reconstruction algorithms (possibly
with improved resolution).

There have been several important efforts \cite{Lampe_Fast_2012,Schmiester2017,Knopp_Online_2016,knopp2015local}
in accelerating MPI reconstruction. One idea is to employ sparse approximations of the linear forward operator
in predefined basis sets, achieved by first applying discrete orthonormal transformations (e.g., Fourier transform,
cosine transform or Chebyshev transform) and then thresholding small elements. The sparse approximation enables
reducing the computing times of iterative solvers (e.g., CGNE and LSQR) \cite{Lampe_Fast_2012}
or potential direct inversion techniques \cite{Schmiester2017}. This idea simultaneously provides a memory-efficient
sparse and approximate representation \cite{Lampe_Fast_2012,knopp2015local,Schmiester2017}. Alternatively, one can
reduce the dimension of the forward map using a row selection technique, based on an SNR type quality measure (see
Section \ref{ssec:freq_selection}) \cite{Knopp_Online_2016}. The speedup is achieved by dimension 
reduction in the data space.

In this work, we revisit several issues in the numerical reconstruction in MPI in the lens of modern inverse theory
(see, e.g., \cite{EnglHankeNeubauer:1996,SchusterKlatenbacher:2012,ItoJin:2015}) and contribute to the development
of robust, accurate and fast reconstruction techniques. First, we highlight the importance of noise covariance in the
reconstruction algorithm, and propose a simple whitening procedure from the perspective of maximal likelihood estimation,
leading to the standard least-squares type fidelity for the whitened problem. Second, we propose a dimension reduction
procedure in the data space to accelerate the benchmark MPI reconstruction algorithm using randomized singular value
decomposition (SVD). It exploits the inherent ill-posed nature of the MPI imaging problem, that is, the system matrix
admits a low-rank approximation, in order to reduce the effective number of equations. This step can be easily incorporated
into any existing algorithms. Third, we discuss the choice of the crucial regularization parameter and describe two popular
rules from the inverse problem community, i.e., discrepancy principle and quasi-optimality criterion. Last, we present
extensive numerical experiments on a publicly available dataset, i.e., the ``shape'' phantom from \texttt{Open MPI dataset}
(available at \url{https://www.tuhh.de/ibi/research/open-mpi-data.html}), to demonstrate the performance of the proposed
algorithmic improvements. These represents the main contributions of the work. Our findings include that the whitening step can
improve the reconstruction accuracy, the randomized SVD can accelerate the benchmark algorithm by tens of times, and
the quasi-optimality criterion is able to determine a suitable regularization parameter in a purely data-driven manner.
Thus, these techniques together may enable automated fast and accurate MPI reconstruction.

The rest of the paper is organized as follows. In Section \ref{sec:forward}, we discuss the proper formulation of the MPI
imaging problem, including system matrix calibration, frequency selection and whitening. In Section 3, we describe the classical
reconstruction method based on Kaczmarz iteration, its acceleration via randomized SVD and parameter choice rules. Then in
Section 4, we present extensive numerical results to illustrate the proposed approaches. In Section \ref{sec:conc},
we present concluding remarks and additional discussions. In an appendix, we provide an error estimate of the approximate
minimizer with the low rank approximation, so as to justify the acceleration procedure.

\section{The MPI forward map}\label{sec:forward}

The accurate mathematical modeling of MPI is still in its infancy. Several mathematical models have
been proposed; see the recent survey \cite{Kluth:2018} for an overview. Nonetheless, state of art numerical
reconstructions are achieved by experimentally calibrated forward operators, which we describe 
in Section \ref{ssec:system_matrix} below.

Mathematically, the physical process can be modeled as follows. Let $\Omega\subset\mathbb{R}^3$ be the spatial
domain occupied by the object of interest, and $c:\Omega\to \mathbb{R}_+$ be the concentration of the magnetic nanoparticles. Then
the measured voltage signal $v_\ell : I:=[0,T] \rightarrow \R$, for $\ell=1,\hdots,L$, obtained at $L\in \N$ receive coils, $0<T<\infty$, is given by
\begin{align}
 v_\ell (t) =&  \int_\Omega c(x) \underset{=s_\ell(x,t)}{\underbrace{\int_I -{a}_\ell(t-t') \mu_0 p_\ell(x)^t\dot{ \bar{m}}(x,t)  \d t' }}\d x 
  +\underset{=v_{\mathrm{E},\ell}(t)}{\underbrace{\int_I \int_{\R^3} -{a}_\ell(t-t')\mu_0 p_\ell(x)^t\dot{ H}(x,t)  \d x \d t'}}, \label{eq:complete-problem}
\end{align}
where the superscript $t$ denotes the transpose of a vector (or a matrix), and the notation $\cdot{}$ denotes taking derivative
with respect to the time $t$. The relevant parameters in the model \eqref{eq:complete-problem} are defined below
\begin{itemize}
  \item $s_\ell:\Omega \times I \rightarrow \R$: the system functions characterizing the magnetic behavior of the nanoparticles
  \item $\bar{m}:\Omega \times I \rightarrow \R^3$: mean magnetic moment of the nanoparticles
  \item $\mu_0>0$: the magnetic permeability in vacuum
  \item  $a_\ell: \bar{I}:=[-T:T]
\rightarrow \R$: the analog filters in the signal acquisition chain
\item  $p_\ell: \R^3 \rightarrow \R^3$: the sensitivity profiles of the receive coil units
\item $H:\R^3 \times I\rightarrow \R^3$: the applied magnetic field, which also induces a voltage in the receive coil
\item $v_{\mathrm{E},\ell} : I \rightarrow \R$: direct feedthrough
\end{itemize}
The analog filters $a_\ell$ are employed to filter out the direct feedthrough $v_{\rm E,\ell}$, and in practice, they are
commonly band stop filters adapted to excitation frequencies of the drive field. However, the direct feedthrough $v_{\rm E,\ell}$ is
usually not perfectly removed by the analog filter $a_\ell$. One big challenge in the modeling is that the analytic forms
of the filters $a_\ell$ are rarely available. A second challenge is the modeling of the mean magnetic moment $\bar m$.
One often assumes that the moment $\bar m$ is independent of the concentration $c$, and thus ignore possible particle-particle 
interactions (which is however present for high concentrations \cite{loewa2016}).
Then one popular way to relate the moment $\bar m$ to the applied magnetic field $H$ is Langevin theory for paramagnetism,
leading to the so-called equilibrium model \cite{Kluth:2018}. These considerations lead to a simplified affine linear forward map $F:X\to Y^L$:
\begin{align*}
  c \mapsto \left( \int_\Omega s_\ell(x,t) c(x) \ {\rm d}x +  v_{\mathrm{E},\ell} (t) \right)_{\ell=1}^L,
\end{align*}
for suitable function spaces $X$ and $Y$, e.g., $X=L^2(\Omega)$, $Y=L^2(I)$, and $\{s_\ell\}_{\ell=1}^L \subset L^2(\Omega
\times I)$. The task in MPI is to recover the concentration $c$ from the measured voltages $(v_\ell)_{\ell=1}^L\in Y^L$. However,
due to the aforementioned practical complications, the precise kernels $s_\ell$ are usually unavailable, and instead they are
calibrated experimentally for MPI image reconstruction.

\subsection{System matrix calibration} \label{ssec:system_matrix}
First we describe the calibration process for obtaining the system matrix. Let $\Gamma \subset \R^3$ be a reference
volume placed at the origin, which is often taken to be a small cube. Then one selects a set of calibration positions
$\{x^{(i)}\}_{i=1}^m\subset \Omega$, which are often chosen such that the sets $\{x^{(i)}+\Gamma\}_{i=1}^m$ form a
partition of the domain $\Omega$, i.e., they are pairwise disjoint and $\Omega=\cup_{i=1}^m\{x^{(i)} +\Gamma\}$. Let $\chi_S$ denote
the characteristic function of a set $S$. Then the set of piecewise constant functions $\{\chi_{x^{(i)}+\Gamma}\}_{i=1}^m$
forms an orthonormal basis (ONB) for a finite dimensional space, which can be used for approximating the concentration
$c$ in the domain $\Omega$. In the experiment, a small sample is placed at these predefined grid points $\{x^{(i)}\}_{i=1}^m$,
which is described as $c^{(i)} = c_0 \chi_{x^{(i)}+\Gamma}$ for some $c_0>0$ and represents one sample volume for calibration.

The measurements $\{v_\ell^{(i)}=\frac{1}{c_0} F_\ell c^{(i)}\}_{i=1}^m$, $\ell=1,\ldots,L$, are then used to
characterize the discrete data-based forward operator via a discrete system matrix. Mathematically, this can be
formulated using the following map
\begin{align}
 Q_n: L^2(I)^L &\rightarrow \mathbb{R}^{ n:=\sum_{i=1}^L 2|J_i|} \notag \\
(v_\ell)_{\ell=1}^L & \mapsto \left[ \begin{array}{c}
     \left(\begin{array}{c}\mathrm{Re}(\langle v_1 , \psi_j \rangle) \\ \mathrm{Im}(\langle v_1 , \psi_j \rangle) \end{array}\right)_{j\in J_1 } \\ \hline
\vdots \\ \hline
\left(\begin{array}{c}\mathrm{Re}(\langle v_L , \psi_j \rangle) \\ \mathrm{Im}(\langle v_L , \psi_j \rangle) \end{array}\right)_{j\in J_L }
    \end{array}\right],\label{eqn:map}
\end{align}
where $\{\psi_j\}_{j\in\mathbb{N}} \subset L^2(I)$ is an ONB of $L^2(I)$, which is commonly
taken to be the Fourier basis of time-periodic signals in $L^2(I)$, i.e. $\psi_j(t)=T^{-\frac12} (-1)^j
e^{i2\pi j t/T}$. The finite index sets $\{J_\ell\}_{\ell=1}^L\subset \mathbb{Z}$ serve as
a preprocessing step prior to image reconstruction, to be described  below.

The map $Q_n$ in \eqref{eqn:map} consists of concatenating multiple receive coil signals, splitting real and imaginary
parts (if necessary), index / frequency selection, and discretization via projection onto a finite subset of the
ONB $\{ \psi_j \}$ (indexed by $J_\ell$). The system matrix $S$ is then given by
\begin{equation}
 S= \left[ \begin{array}{c|c|c}
Q_n((v_\ell^{(1)})_\ell) & \hdots & Q_n((v_\ell^{(m)})_\ell)
    \end{array}\right] \in \R^{n\times m}.
\end{equation}
For the measured signals $\{v_\ell\}_{\ell=1}^L$, we build the measurement vector $v=Q_n((v_\ell)_{\ell=1}^L)$ analogously.

The background measurement $v^{(0)}=F\mathbf{0}$ (or more precisely, the mean over multiple measurements) is
used to remove the influence of the direct feedthrough $v_{\rm E,\ell}$. Then by subtracting the vector $v_0=
Q_n((v^{(0)}_\ell)_{\ell=1}^L)$ and rank-one matrix $S_0 = v_0 \mathbf{1}_m^t$ (with $\mathbf{1}_m\in \mathbb{R}^m$
with all entries equal to unit), we obtain the following linear MPI reconstruction problem
\begin{equation}
 Ax=y,
\end{equation}
where $A\in \mathbb{R}^{n\times m}$, $y\in\mathbb{R}^n$ and $x\in\mathbb{R}^m$ are defined by
\begin{equation*}
  A=S-S_0, \quad y=v-v_0,\quad \mbox{and}\quad c=\sum_{i=1}^m x_i \chi_{x^{(i)}+\Gamma}.
\end{equation*}
That is, we have used a piecewise constant representation of the concentration $c$,
with $x_i$ being the concentration $c$ on the cell $x^{(i)}+\Gamma$.

It is worth noting that the calibration procedure is laborious, time consuming and highly problem dependent, and has
limited spatial resolution. For example, it requires a recalibration whenever the experimental setting changes. Therefore,
there is a huge demand in developing accurate model-based approaches or hybrid
approaches for MPI image reconstruction. We refer interested readers to \cite{Kluth:2018} for relevant mathematical models and
\cite{MarzWeinmann:2016,ErbWeinmann:2018,KluthJinLi:2017} for preliminary mathematical analysis.

\begin{remark}
Note that different strategies have been proposed to perform the background subtraction for the system matrix in
the literature \cite{weizenecker2009three,Them2016}, which may require additional effort during the system
matrix calibration, in view of costly robot movements.
\end{remark}

\subsection{Frequency selection}\label{ssec:freq_selection}
In MPI there are two standard preprocessing approaches, i.e., band pass approach and SNR-type
thresholding, and they are often combined via the index sets $\{J_\ell\}_{\ell=1}^L$. Let
$I_\mathrm{BP}=\{j\in \mathbb{Z}|\ b_1 \leq |j|/T \leq b_2 \}$ be the band pass indices for
frequency band limits $0\leq b_1 < b_2 \leq \infty$. The main purpose of band pass is to
filter out the direct feedthrough $v_{\rm E,\ell}$ (although not perfectly), and outside the frequency band
$I_{\rm BP}$, the signal is deemed to be too noisy and simply discarded. For the SNR-type thresholding, one
standard quality measure is determined by computing a ratio of mean absolute values from
individual measurements $v_\ell^{(i)}$ (cf. Section \ref{ssec:system_matrix}) and a set of
empty scanner measurements $\{ v_{\ell,0}^{(k)} \}_{k=1}^K$ \cite{Franke:2016} obtained
during the calibration process:
\begin{equation}
 d_{\ell,j}= \frac{\frac{1}{N}\sum_{i=1}^N |\langle v_\ell^{(i)} -\mu_\ell^{(i)}, \psi_j \rangle
 |}{\frac{1}{K} \sum_{k=1}^K |\langle v_{\ell,0}^{(k)} - \mu_\ell, \psi_j \rangle |},
\end{equation}
where $\mu_\ell=\frac{1}{K} \sum_{k=1}^K v_{\ell,0}^{(k)}$ is the mean measurement, and
$\mu_\ell^{(i)}=\kappa_i v_{\ell,0}^{(k_i)} + (1-\kappa_i) v_{\ell,0}^{(k_i+1)}$ is a convex
combination of the previous ($k_i$-th) and following ($k_i+1$-th) empty scanner measurement
with respect to the $i$-th calibration scan. The parameters $\kappa_i \in [0,1]$ are chosen
equidistant for all calibration scans between two subsequent empty scanner measurements.
Then for a given threshold $\tau \geq 0$, we define
\begin{equation}
 J_\ell=\{ j \in I_\mathrm{BP} |  d_{\ell,j}\geq \tau \},\quad \ \ell =1,\hdots,L.
\end{equation}

\begin{remark}
The SNR-type thresholding was also used to obtain a dimensionality reduction in the system of linear
equations in \cite{Knopp_Online_2016} to enable online reconstruction.
\end{remark}

\subsection{Whitening}\label{ssec:whitening}
The calibration process leads to a linear inverse problem
\begin{equation*}
  Ax=y^\delta  \quad \mbox{with } y^\delta = y^\dag + \eta,
\end{equation*}
where $\eta$ denotes the noise in the data, due to the imperfect data acquisition process.
In practice, it is often assumed to follow a Gaussian distribution $N(\mu,C)$ with mean $\mu\in\mathbb{R}^n$ and covariance
$C\in \mathbb{R}^{n\times n}$ (real symmetric positive semidefinite), invoking the central limit theorem (for multiple measurements). These statistical parameters
are then estimated from repetitive measurements. The mean is often approximately zero after
background subtraction. The full covariance matrix $C$ has a large number of parameters, and requires a
large volume of data for a reliable estimate, which is not necessarily available in practice.
Then one often imposes suitable structures on the covariance $C$, e.g., diagonal covariance, or
uses more advanced options, e.g., sparse inverse covariance \cite{FriedmanHastie:2008}. In MPI
experiments, the covariance $C$ is often not a scalar multiple of the
identity matrix (i.e., the noise components are not necessarily independent and identically distributed).
Then it is important to exploit the structure
of the covariance $C$ in image reconstruction, in the spirit of statistical inference. This can be
achieved using a whitening matrix $W$ such that $W(\eta - \mu)$ follows a zero mean Gaussian
distribution with identity covariance. The whitening matrix $W$ can be determined from the
eigendecomposition $(Q,\Lambda)$ of the covariance $C$ (i.e.,
$C= Q\Lambda Q^t$) by $W=\Lambda^{-\frac12}Q^t$. Alternatively one may employ the Cholesky
decomposition to whiten the noise. Then we arrive at the following linear problem
\begin{equation}\label{eqn:whiten}
WAx = W(y^\delta - \mu).
\end{equation}
The whitening step enables the use of the standard least-squares formulation in MPI reconstruction,
in the spirit of the classical maximum likelihood approach, i.e.,
\begin{equation}\label{eqn:whiten-ml}
  \|WAx-W(y^\delta -\mu)\|^2.
\end{equation}
Conceptually, a large variance indicates that the corresponding measurement may be not so reliable,
and thus may behave like an outlier within the dataset, for which an inadvertent use of the standard
least-squares formulation may significantly sacrifice the reconstruction accuracy. Instead, it should be weighed down in the
reconstruction step, which is precisely the role played by the whitening step. Clearly, the whitening in
\eqref{eqn:whiten-ml} is equivalent to the weighted least-squares $(Ax-(y^\delta-\mu))^tC^{-1}(Ax-(y^\delta-\mu))$,
which corresponds to the maximum likelihood estimate for the data $y^\delta$. This formulation also properly accounts
for the noise statistics. However, the explicit whitening construction is advantageous for accelerating
reconstruction via the randomized SVD described in Section \ref{ssec:rsvd} below.

\begin{remark}
The weighting was also used in \cite{Knopp2010e}, with the weight $w_k$ given by the energy of the $k$th row $a_k$
of the system matrix $A$, i.e., $w_k=\|a_k\|$. Thus the weighting in \cite{Knopp2010e} represents a 
form of preconditioning, which differs from the covariance interpretation in \eqref{eqn:whiten}, despite the formal similarity.
\end{remark}

For a calibrated system matrix $A$ as in Section \ref{ssec:system_matrix}, the whitening process has to be adapted
properly. Specifically, for an ONB $\{b_i\}_i\subset X$, we have
\begin{equation*}
w_i=A^\dagger b_i + \eta_i,
\end{equation*}
where $A^\dagger:X\rightarrow Y$ denotes the (unknown) true forward map and $\eta_i$ follow the same
distribution as the noise $\eta$, i.e., $(\eta_i-\mu)\sim N(0,C)$. Then the (mean) corrected and
noisy forward map $A$ is given by
\begin{equation*}
Ax=\sum_i \langle x, b_i \rangle (w_i-\mu) = A^\dagger x + \sum_i \langle x, b_i \rangle (\eta_i-\mu).
\end{equation*}
Thus, the noise term due to modeling error (in the forward map $A$) (relative to the exact one $A^\dag$)
is given by $\sum_i \langle x, b_i \rangle (\eta_i-\mu)$. It is important to observe that the statistics of this term is actually
dependent of the unknown concentration $x$: the mean is still zero, but the covariance is changed via a linear map depending
on $x$. In practical inversion, this error term is often lumped into the data error, and combined with the measurement
error in the data $y^\delta$, whose noise statistics are then $x$-dependent. This short discussion highlights the distinct role 
of modeling error in the data-based approach. In our
discussions below, we shall ignore the modeling error (for the acceleration step) and employ the whitening
procedure described above. Clearly, more suitable approaches should employ alternatives, e.g., total
least-squares approach \cite{KluthMaass:2017} or approximation error modeling \cite{BrandtSeppanen:2018},
which are, however, beyond the scope of this work.

\section{Enhanced image reconstruction}\label{sec:alg}
Now we describe the common MPI reconstruction method, its acceleration via
randomized SVD and the proper choice of the regularization parameter.
\subsection{The common approach}\label{ssec:Kaczmarz}
Currently, the most popular and successful idea in MPI reconstruction is based on the following constrained Tikhonov regularization
with a quadratic penalty:
\begin{equation}\label{eqn:std-Tikhonov}
  x^\dag = \arg\min_{x\geq 0}\|Ax-y^\delta\|^2 + \alpha\|x\|^2,
\end{equation}
where $\alpha>0$ is the regularization parameter, controlling the tradeoff between the two terms \cite{ItoJin:2015};
see Section \ref{ssec:rule} below for two parameter choice rules, i.e., discrepancy principle and quasi-optimality
criterion. The nonnegativity constraint $x\geq 0$ is understood componentwise, and reflects the fact that the
concentration $x$ is nonnegative. The constraint is essential for obtaining physically meaningful reconstructions.
The whitening approach in Section \ref{ssec:whitening} may be implemented in the form \eqref{eqn:std-Tikhonov}
straightforwardly by penalizing the fidelity functional in equation \eqref{eqn:whiten-ml}, and thus all the discussions
below adapt accordingly.

In practice, a variant of the popular Kaczmarz method \cite{Kaczmarz:1937}, developed in \cite{Dax:1993}, is often employed
for solving the constrained optimization problem \eqref{eqn:std-Tikhonov} in the MPI reconstruction. It has demonstrated
excellent empirical performance \cite{weizenecker2009three,Knopp2010e,RahmerHalkola:2015},
and has been implemented in commercial MPI scanners (included in ParaVision\textsuperscript{\textregistered} (Bruker BioSpin MRI
GmbH, Germany) as reported in \cite{franke2017}). One distinct feature of the Kaczmarz method is that at each iteration,
it operates only on one equation, instead of the whole linear system, and thus its computational complexity per iteration
is independent of the amount of data. This feature makes the algorithm especially attractive for problems with large datasets
e.g., 3D MPI, and traditionally it has been very successful within the computed tomography community \cite{HermanLentLutz:1978,
Natterer:1986,JiaoJinLu:2017}. The complete
procedure of the variant in \cite{Dax:1993} is  given in Algorithm \ref{alg:Kaczmarz}. The algorithm often reaches the desired
convergence within tens of sweeps through the equations, and thus its complexity is roughly proportional to the number $n$ of rows
in the matrix $A$. Next we shall employ it as the benchmark algorithm, and propose a preprocessing step to accelerate the computation.

\begin{algorithm}[hbt!]
  \centering
  \caption{Kaczmarz method for problem \eqref{eqn:std-Tikhonov}.\label{alg:Kaczmarz}}
  \begin{algorithmic}[1]
  \STATE Input matrix $A\in\mathbb{R}^{n\times m}$, $y^\delta\in\mathbb{R}^n$, and $\alpha>0$ \\
    Optional: initial value $x_0 \in \R^m$ ($0$ default), relaxation parameter $\omega \in (0,2)$ ($1$ default);
  \STATE Initialize $x=x_0$, $z=0 \in \R^n$, $\bar{z}=0 \in \R^m$;
    \FOR{$k=1,\ldots,K$}
     \STATE $i=( k \ \mathrm{mod} \ n )+1$; \quad \textbackslash\textbackslash row index
     \STATE $\eta = - \omega \frac{\langle a_i, x \rangle + \sqrt{\alpha} z_i - y^\delta_i }{\|a_i\|^2 + \alpha}$;
     \quad  \textbackslash\textbackslash $a_i$ is $i$-th row of $A$
     \STATE $z_i \leftarrow z_i + \eta \sqrt{\alpha}$;
     \STATE $x \leftarrow x + \eta a_i^t$;
    \IF{$i=n$ or $k=K$}
      \STATE $\bar{\eta}=-(\mathrm{min}(\bar{z}_j,\omega x_j))_{j=1,\hdots,m}$;
      \STATE $\bar{z} \leftarrow \bar{z} + \bar{\eta}$;
      \STATE $x \leftarrow x + \bar{\eta}$; \quad \textbackslash\textbackslash positivity constraint
    \ENDIF
    \ENDFOR
   \STATE Return the approximation $x_K\leftarrow x$.
  \end{algorithmic}
\end{algorithm}

\subsection{Acceleration by randomized SVD}\label{ssec:rsvd}

Now we describe a simple acceleration method for Algorithm \ref{alg:Kaczmarz} based on randomized singular value
decomposition (SVD). Recall that SVD of a matrix $A\in \mathbb{R}^{n\times m}$ is given by
\begin{equation*}
  A = U\Sigma V^t,
\end{equation*}
where $U=[u_1\ u_2\ \ldots\ u_n]\in\mathbb{R}^{n\times n}$ and $V=[v_1\ v_2 \ \ldots\ v_m]\in\mathbb{R}^{m\times m}$
are column orthonormal matrices, $\Sigma\in\mathbb{R}^{n\times m}$ is a diagonal matrix, with the diagonal entries ordered
in a nonincreasing manner: $\sigma_1\geq \sigma_2\geq \ldots\geq \sigma_r>\sigma_{r+1}=\ldots=\sigma_{\min(m,n)}$,
where $r$ is the rank of the matrix $A$. Traditional methods for computing SVD, e.g., Lanczos bidiagonalization, are
not attractive for general dense matrices as arising in MPI. For example,  the complexity of
Golub-Reinsch algorithm for computing SVD is $4n^2m + 8m^2n +9m^3$ (for $n\geq m$) \cite[p. 254]{GolubVanLoan:1996}.
Thus, it can be prohibitively expensive for large-scale matrices. The randomized SVD (rSVD) provides an
efficient way to construct a low-rank approximation by randomly mixing the columns of $A$ \cite{HalkoMartinssonTropp:2011}.
The overall procedure is given in Algorithm \ref{alg:rsvd} for the case $n\geq m$, and the case $n<m$ can be obtained
by applying Algorithm \ref{alg:rsvd} to the transposed matrix $A^t$.

\begin{algorithm}[hbt!]
  \centering
  \caption{rSVD for $A\in\mathbb{R}^{n\times m}$, $n\geq m$.\label{alg:rsvd}}
  \begin{algorithmic}[1]
    \STATE Input matrix $A\in\mathbb{R}^{n\times m}$, $n\geq m$, and target rank $k$;
    \STATE Set parameters $p$ (default $p=5$), and $q$ (default $q=0$);
    \STATE Sample a random matrix $\Omega=(\omega_{ij})\in\mathbb{R}^{m\times (k+p)}$, with $\omega_{ij}\sim N(0,1)$;
    \STATE Compute the randomized matrix $Y=(AA^*)^qA\Omega$;
    \STATE Find an orthonormal basis $Q$ of $\mathrm{range}(Y)$;
    \STATE Form the matrix $B=Q^*A$;
    \STATE Compute the SVD of $B=WSV^*$;
    \STATE Return the rank $k$ approximation $(\tilde U_k,\tilde \Sigma_k,\tilde V_k)$, cf. \eqref{eqn:rsvd}.
  \end{algorithmic}
\end{algorithm}

In Algorithm \ref{alg:rsvd}, Step 4 is to extract the column space $\mathcal{R}(A)$ of $A$, i.e., $\mathcal{R}(Y)\subset
\mathcal{R}(A)$, and Step 5 is to find an orthonormal basis for $\mathcal{R}(Y)$, e.g., via QR decompositon or skinny SVD. The
remaining steps can be regarded as one subspace iteration for computing SVD of the matrix $QQ^tA$. Since the involved
matrices are of much smaller size, these SVDs can be carried out efficiently. The accuracy of $\mathcal{R}(Y)$ to
$\mathcal{R}(A)$ is crucial to the success of the algorithm. A positive exponent $q$ can improve the accuracy when the
singular values of $A$ decay slowly, and the oversampling parameter $p$ is to improve the accuracy of the range probing.
The low-rank approximation $\tilde A_k$ by rSVD is given by
\begin{equation}\label{eqn:rsvd}
  \tilde A_k = \tilde U_k\tilde \Sigma_k \tilde V_k^t, \mbox{ with }\tilde U_k=(QW)_{:,1:k}, \ \ \tilde\Sigma_k = S_{1:k,1:k}, \ \ \tilde V_k=V_{:,1:k},
\end{equation}
where the notation $1:k$ denotes taking the first $k$ columns/rows of the matrix. The complexity of Algorithm
\ref{alg:rsvd} is around $4(q+1)kmn$, which is much lower than computing SVD of $A$ directly. Clearly, the
efficiency of the approach relies crucially on the low-rank structure of $A$. In the context of MPI, it was rigorous
justified for the equilibrium model in \cite{KluthJinLi:2017}: by means of singular value decay estimates, the MPI
inverse problem is shown to be severely ill-posed for the equilibrium model with common experimental setups.

With the rSVD $(\tilde U_k,\tilde \Sigma_k,\tilde V_k)$ at hand, we approximate constrained Tikhonov
regularization \eqref{eqn:std-Tikhonov} by
\begin{align}
  &\quad \arg\min_{x\geq 0}\|Ax-y^\delta\|^2 + \alpha \|x\|^2 \nonumber\\
  & \approx \arg\min_{x\geq0}\|\tilde U_k\tilde \Sigma_k\tilde V_k^tx - y^\delta\|^2 + \alpha \|x\|^2\nonumber\\
    & = \arg\min_{x\geq0} \|\tilde\Sigma_k\tilde V_k^tx - \tilde U_k^ty^\delta\|^2 + \alpha\|x\|^2. \label{eqn:std-Tikhonov-apprx}
\end{align}
The number $k$ of rows in the approximate optimization problem \eqref{eqn:std-Tikhonov-apprx} is much smaller than $n$, enabling a
significant speedup of Algorithm \ref{alg:Kaczmarz}. In essence, rSVD is a preprocessing step to extract essential information
content in $A$, and can also be viewed as a dimensionality reduction strategy in the data space; see Appendix \ref{app:error}
for error estimates on the minimizer due to the low-rank approximation. Note that this step does not alter the whole reconstruction procedure.

So far we have described the acceleration procedure for problem \eqref{eqn:std-Tikhonov}. It applies equally well
to the whitened problem \eqref{eqn:whiten} derived from Section \ref{ssec:whitening}, when the covariance $C$ of the noise $\eta$
is not a scalar multiple of the identity matrix. This can be achieved simply by applying rSVD to the whitened
matrix $WA$ to construct an accurate low-rank approximation. Clearly, the whitening matrix $W$ may influence
the spectral behavior of $WA$, which is generally different from that of $A$. In passing, we note that the reduced
problem \eqref{eqn:std-Tikhonov-apprx} may be solved by another iterative solvers, e.g., CGNE or LSQR \cite{Saad:2003}, and
acceleration is also expected, which however will not be further pursued below.

\subsection{Parameter choice}\label{ssec:rule}
One important issue of any imaging algorithm is the proper choice of the regularization parameter $\alpha$ in the regularized
problem \eqref{eqn:std-Tikhonov}. Too small a value for $\alpha$ leads to overfitting, whereas too large a value for $\alpha$
leads to oversmoothing and smearing in the reconstruction. Thus it is very important to choose a proper $\alpha$ value. This
choice generally has been a notoriously challenging issue and is still not satisfactorily resolved. Nonetheless, a large
number of choice rules, e.g., discrepancy principle, quasi-optimality criterion, balancing principle, generalized cross
validation and L-curve criterion, have been proposed \cite{ItoJin:2015}. However, these rules have not been extensively studied within the
MPI community, where only the L-curve criterion has been experimentally evaluated \cite{Knopp_etal2008fc}. One challenge with MPI is
the presence of significant model errors, besides the usual data error, and these rules have to be adapted properly
(see the work \cite{HamarikKindermann:2018} for some recent insights). We describe two popular choice rules, i.e.,
discrepancy principle \cite{Morozov:1966} and quasi-optimality criterion \cite{TikhonovGlaskoKriksin:1979}.

The discrepancy principle due to Morozov \cite{Morozov:1966} chooses a parameter $\alpha$ such that the residual is comparable with the noise level  $\delta$
of the data, i.e., $\delta:=\|y-y^\delta\|$ and the error $\epsilon:=\|A-\tilde A\|$ of the inexact operator $\tilde A$. This may
be carried out as follows. Let $\{\alpha_i\equiv\alpha_0q^i\}_{i\geq0}$ be a geometrical
sequence, with $\alpha_0>0$ and $q\in(0,1)$ being the largest regularization parameter and the decreasing
factor, respectively. Often one sets $\alpha_0$ to $\alpha_0=\|A\|^2$. Then the discrepancy principle
choose the optimal $\alpha_{i^*}$ from the sequence such that
\begin{equation}\label{eqn:discrepancy}
 i^\ast = \arg \min_{i\geq0} \{ \|\tilde A\tilde x^\delta_{\alpha_i} - y^\delta \| \leq \tau \delta + \sigma \epsilon \}.
\end{equation}
Here, the parameters $\tau,\sigma$ are to be specified. A rule of thumb of their choice is as follows: $\tau>1$, say $\tau =1.1$,
and $\sigma>\|x^\dag\|$, e.g., $\sigma = 9/4\|x^\dag\|$ \cite{MaassRieder:1997}. The bound $\|x^\dag\|$ may have to be estimated.
With the choice, then the true solution $x^\dag$ is admissible in the following sense:
\begin{equation*}
  \|\tilde Ax^\dag-y^\delta\|\leq \epsilon\|x^\dag\| + \delta.
\end{equation*}
In order to apply the discrepancy principle, one needs an accurate bound on $\delta$ and $\epsilon$, which is
not always easy to obtain and thus may limit its applicability in practice.

The quasi-optimality criterion \cite{TikhonovGlaskoKriksin:1979} is one popular heuristic rule, which is purely data driven.
In a Hilbert space setting, the rule amounts to minimize the function $\alpha
\|\frac{d}{d\alpha}\tilde x_\alpha^\delta\|$, which can serve as a rough bound on the reconstruction
error \cite{JinLorenz:2010}. On a geometric sequence $\{\alpha_i\equiv \alpha_0q^i\}_{i\geq0}$, the
discrete version reads
\begin{equation}\label{eqn:quasi_optimality}
  i^* = \arg\min_{i\geq 0}\{\|\tilde x_{\alpha_{i+1}}^\delta - \tilde x_{\alpha_i}^\delta\|\},
\end{equation}
and the chosen value is then given by $\alpha_{i^*}$. Clearly, the criterion is straightforward to implement.
Under various structural conditions on the noise, the convergence and rates of the rule were analyzed in
\cite{JinLorenz:2010}.

\section{Numerical results and discussions}\label{sec:numer}

In this section, we present numerical results to illustrate the performance of the proposed algorithmic
tricks, i.e., whitening, acceleration and parameter choices. The experimental setup is as follows.
We employ a measured system matrix, where a band pass filter is applied (with $b_1=80$~kHz and $b_2=625$~kHz),
which yields a system matrix $A\in \R^{n\times m}$ for the $L=3$ receive channels, cf. Section
\ref{ssec:system_matrix}. Background measurements (with the same band filter) are used to obtain a diagonal
whitening operator $W\in \R^{n\times n}$ (cf. Section \ref{ssec:whitening}) and thus also the whitened matrix
$A_W=WA\in\R^{n\times m}$ (for the whitening approach). Let $(\tilde U_k, \tilde \Sigma_k,\tilde V_k)$ be the
rSVD of $A$ given by Algorithm \ref{alg:rsvd}, and analogously $(\tilde U_{W;k},\tilde \Sigma_{W;k},
\tilde V_{W;k})$ for the rSVD of $A_W$. All forward maps are scaled to a unit operator norm.

Below we compare the reconstructions by the proposed method with that by the standard Kaczmarz method
(i.e., Algorithm \ref{alg:Kaczmarz}) and the dimensionality reduction method proposed in
\cite{Knopp_Online_2016}. Specifically, we consider the following reconstruction methods:
\begin{itemize}
  \item \textbf{[STD]}: The reconstructions $x_{\rm STD}$ and $x_{W;\rm STD}$ are respectively obtained by
    \begin{align*}
      x_\mathrm{STD}&=\arg\min_{x\geq 0}\|Ax-y^\delta\|^2 + \alpha\|x\|^2 \quad \mbox{and}\quad x_{W;\mathrm{STD}}
   =\arg\min_{x\geq 0}\|A_Wx-Wy^\delta\|^2 + \alpha\|x\|^2,
   \end{align*}
   with Algorithm \ref{alg:Kaczmarz}; cf. problem \eqref{eqn:std-Tikhonov}.
  \item \textbf{[SNR]}: For a given $k\in\N$, there exists a $\tau_k$ such that a reduced system with $k$ rows is
    obtained via the SNR-type frequency selection for $\tau=\tau_k$ (cf. Section \ref{ssec:freq_selection}) using $Q_k$
    to build the system matrix $A_k$ and the measurement vector $y^\delta_k$ as proposed for online reconstruction in
    \cite{Knopp_Online_2016}. Then the diagonal whitening operator $W_k$ is determined for the reduced system. The reconstructions
    $x_{\rm SNR}$ and $x_{W_k,\rm SNR}$ are respectively obtained by
    \begin{align*}
       x_\mathrm{SNR}=\arg\min_{x\geq 0}\|A_kx-y^\delta_k\|^2 + \alpha\|x\|^2\quad\mbox{and}\quad x_{W_k;\mathrm{SNR}}=\arg\min_{x\geq 0}
    \|A_{k;W_k}x-W_ky^\delta_k\|^2 + \alpha\|x\|^2,
    \end{align*}
    with Algorithm \ref{alg:Kaczmarz}; cf. problem \eqref{eqn:std-Tikhonov}.
   \item \textbf{[rSVD1]}: The (rSVD) reconstructions $x_{\rm rSVD1}$ and $x_{W;\rm rSVD1}$ are respectively obtained by
   \begin{align*}
     x_\mathrm{rSVD1} &=\arg\min_{x\geq0} \|\tilde\Sigma_k\tilde V_k^tx - \tilde U_k^ty^\delta\|^2
   + \alpha\|x\|^2,\\
    x_{W;\mathrm{rSVD1}}&=\arg\min_{x\geq0} \|\tilde\Sigma_{W;k}\tilde V_{W;k}^tx - \tilde U_{W;k}^t
     Wy^\delta\|^2 + \alpha\|x\|^2,
   \end{align*}
   with Algorithm \ref{alg:Kaczmarz} for given $k \in \N$; cf. problem \eqref{eqn:std-Tikhonov-apprx}.
   \item \textbf{[rSVD2]}: The reconstruction $x_{\rm rSVD2}$ is computed via
   \begin{equation*}
     x_\mathrm{rSVD2}=P_{\R_+^m}\tilde V_k \tilde\Sigma_k^{-1;\alpha} \tilde U_k^ty^\delta,
  \end{equation*}
  with $\tilde\Sigma_k^{-1;\alpha}=\mathrm{diag}(\tilde  \Sigma_{k;ii}/(\tilde\Sigma_{k;ii}^2
  + \alpha^2))\in\mathbb{R}^{k\times k}$. The reconstruction $x_{W;\mathrm{rSVD2}}$ is obtained
  similarly. These methods treat the nonnegativity constraint in an ad hoc manner, and can be used
  as rough approximations to $x_{\rm rSVD1}$ and $x_{\rm W;rSVD1}$.
\end{itemize}

These methods are evaluated on a publicly available 3D dataset, i.e., \texttt{open MPI dataset} (downloaded
from \url{https://www.tuhh.de/ibi/research/open-mpi-data.html}, last accessed on January 19, 2019) provided
in the MPI Data Format (MDF) \cite{knopp2018mdf}. The (measured) system matrix data $\{v_\ell^{(i)}\}_{i=1}^m$,
$\ell=1,2,3$, is obtained using a cuboid sample of size 2 mm $\times$ 2 mm $\times$ 1 mm. The calibration is
carried out with Perimag\textsuperscript{\textregistered} tracer with a concentration 100 mmol/l.
The field-of-view has a size of 38 mm $\times$ 38 mm $\times$ 19 mm and the sample positions have a distance
of 2 mm in x- and y-direction and 1 mm in z-direction, resulting in $19\times19\times19=6859$ voxels, which
gives the number $m$ of columns in the system matrix $A$. The entries of $A$ are averaged over 1000 repetitions
and empty scanner measurements are performed and averaged every 19 calibration scans. The phantom measurements
are averaged over 1000 repetitions of the excitation sequence, and with each phantom, an empty measurement
with 1000 repetitions is provided, which are used for the background correction of the measurement and the
system matrix $A$ (cf. Section \ref{ssec:system_matrix}) and also for the approximation of the covariance $C$
respectively the whitening matrix $W$ (see Section \ref{ssec:whitening}).

We validate the proposed methods on the ``shape'' phantom in the dataset. It is a cone defined by a 1 mm radius
tip, an apex angle of 10 degree, and a height of 22 mm. The total volume is 683.9 $\mu$l. Perimag\textsuperscript{\textregistered}
tracer with a concentration of 50 mmol/l is used. See Fig. \ref{fig:phantom_shape} for a schematic illustration
of the phantom and the visualization structure of the 3D reconstructions below.

\begin{figure}[hbt!]
\centering
\begin{minipage}{0.4\textwidth}
\includegraphics[height=5cm]{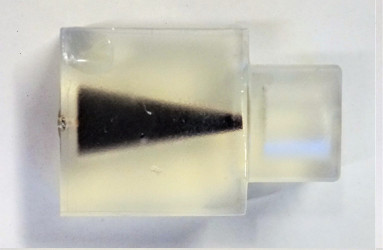}
\end{minipage}
\hspace{2cm}
\begin{minipage}{0.2\textwidth}
\includegraphics[height=5cm]{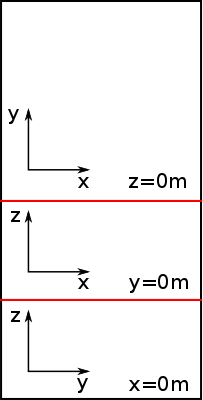}
\end{minipage}
 \caption{``Shape'' phantom from the open MPI dataset (left) and visualization structure
 for the 3D reconstructions (right).}\label{fig:phantom_shape}
\end{figure}

\subsection{The benefit of whitening}
First, we illustrate the benefit of whitening in the standard reconstruction technique (i.e., STD). Due to the
small number of repetitions of the empty measurements (1000 repetitions compared to 23482 indices
in the band limits for each receive coil, when assuming the receive coils are independent), the
covariance $C$ is approximated by a diagonal one to ensure a reliable estimation. The estimated
(diagonal) covariance $C$ is shown in Fig. \ref{fig:cov_diag}. The magnitude of the noise variance
is observed to vary dramatically with the frequency over the frequency band for both  real and
imaginary parts, and the behavior is similar for all three receive coils. The heteroscedastic nature
of the noise necessitates the use of the whitening / weighting in the reconstruction algorithm as
discussed in Section \ref{ssec:whitening} in order to properly account for the noise statistics.

\begin{figure}[hbt!]
\centering
\begin{minipage}{0.3\textwidth}
\centering
$x$-coil \\
\includegraphics[width=\textwidth]{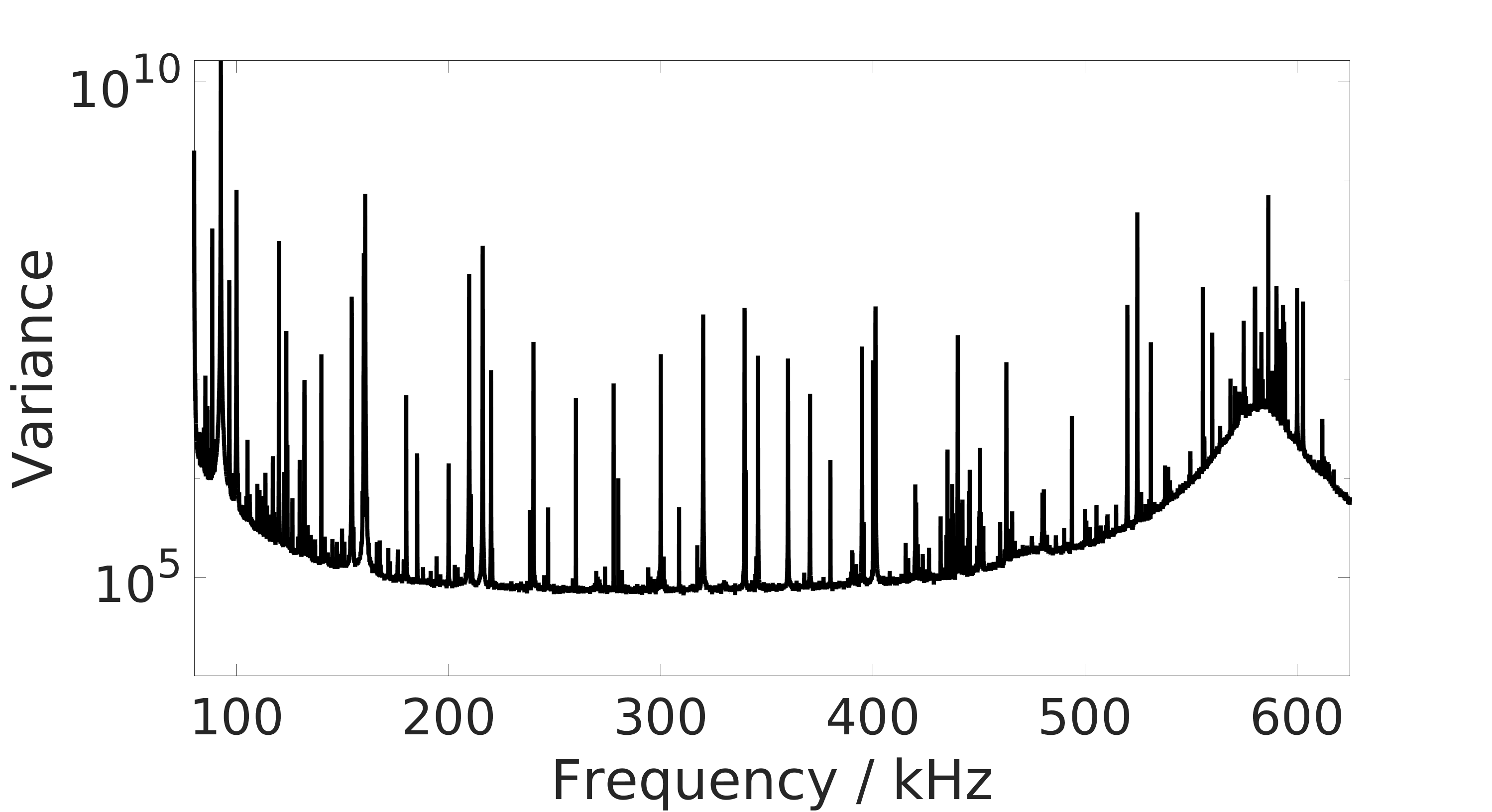} \\
\includegraphics[width=\textwidth]{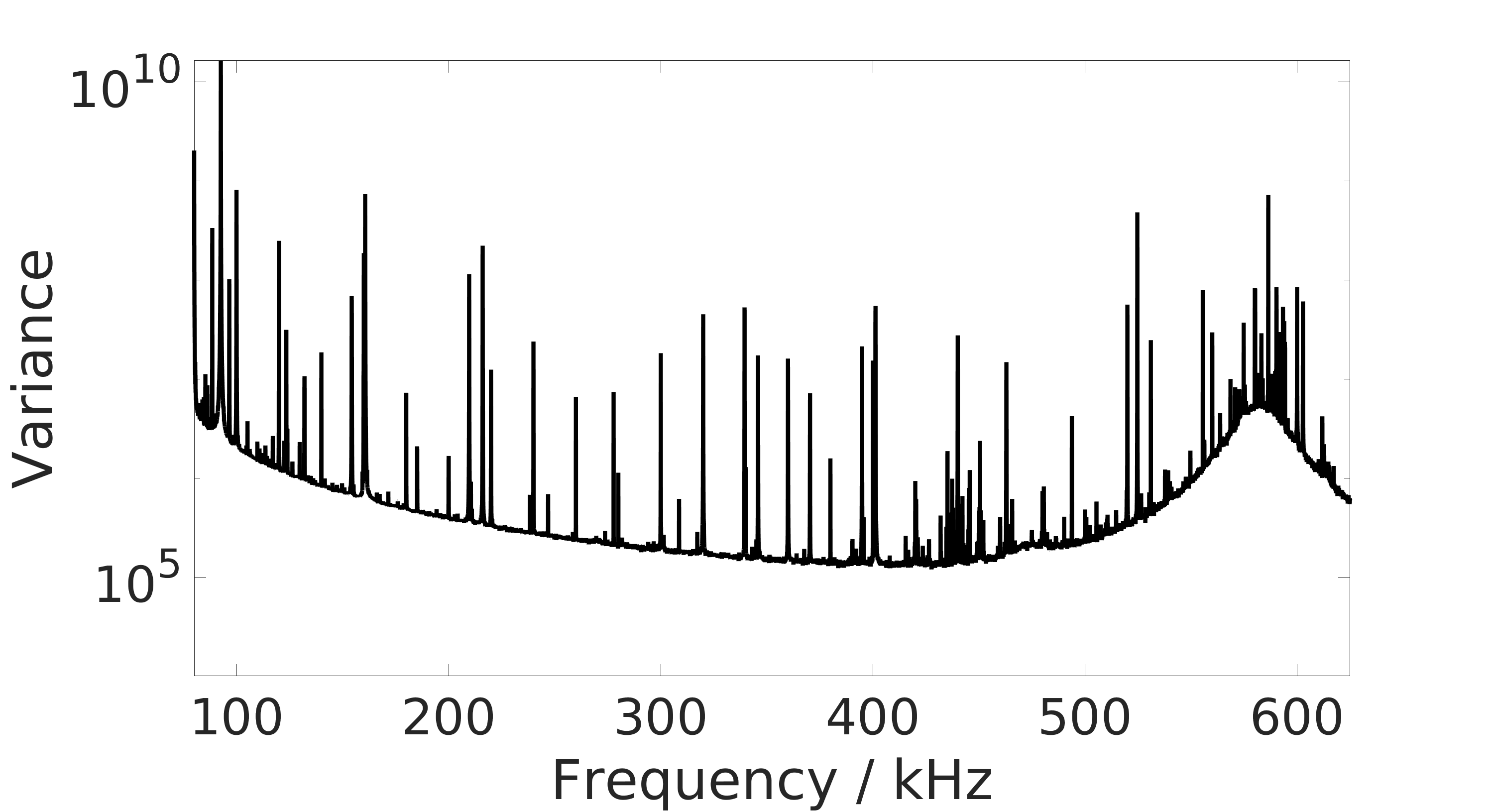} \\
 \end{minipage}
\begin{minipage}{0.3\textwidth}
\centering
$y$-coil \\
\includegraphics[width=\textwidth]{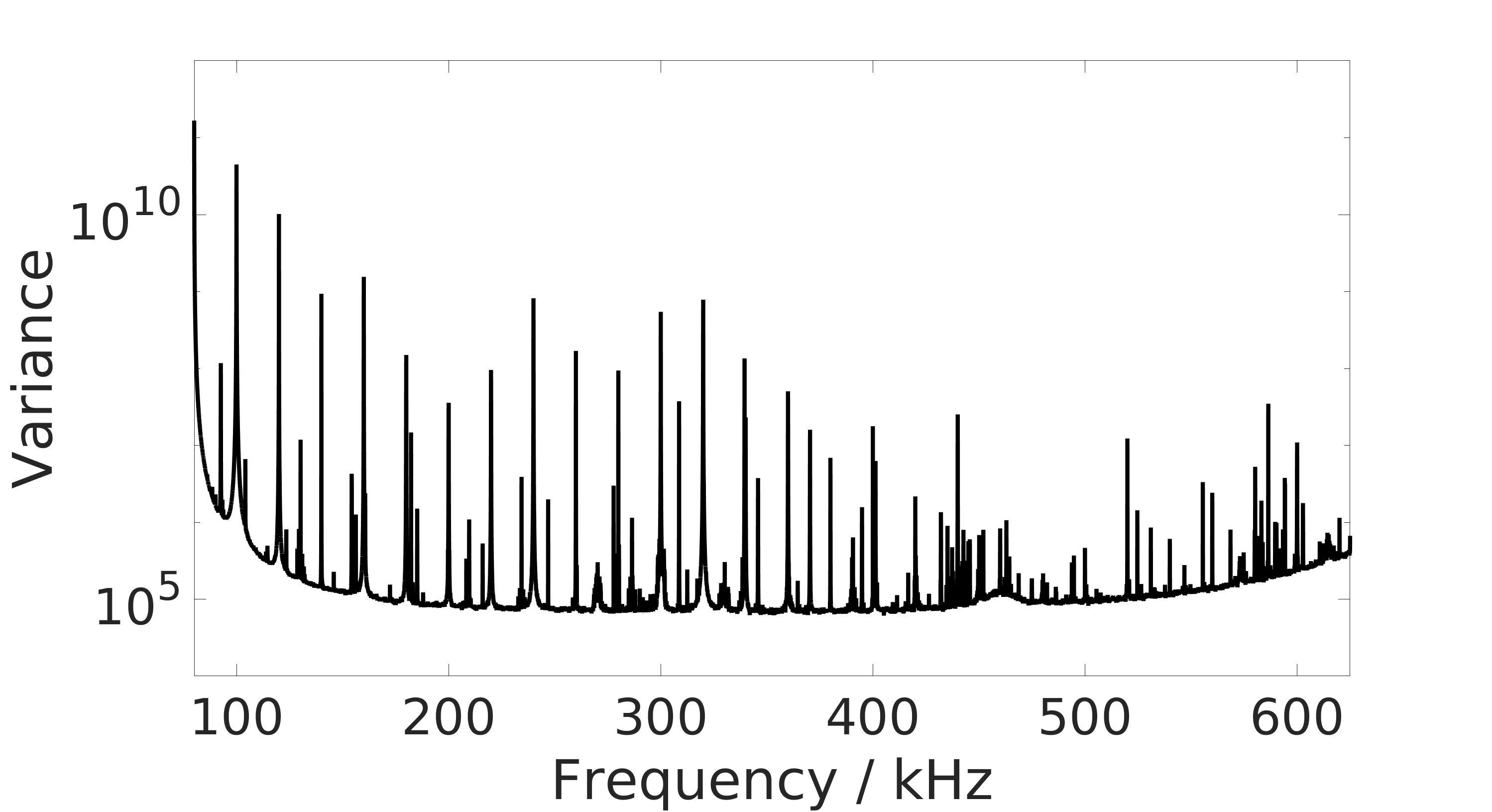} \\
\includegraphics[width=\textwidth]{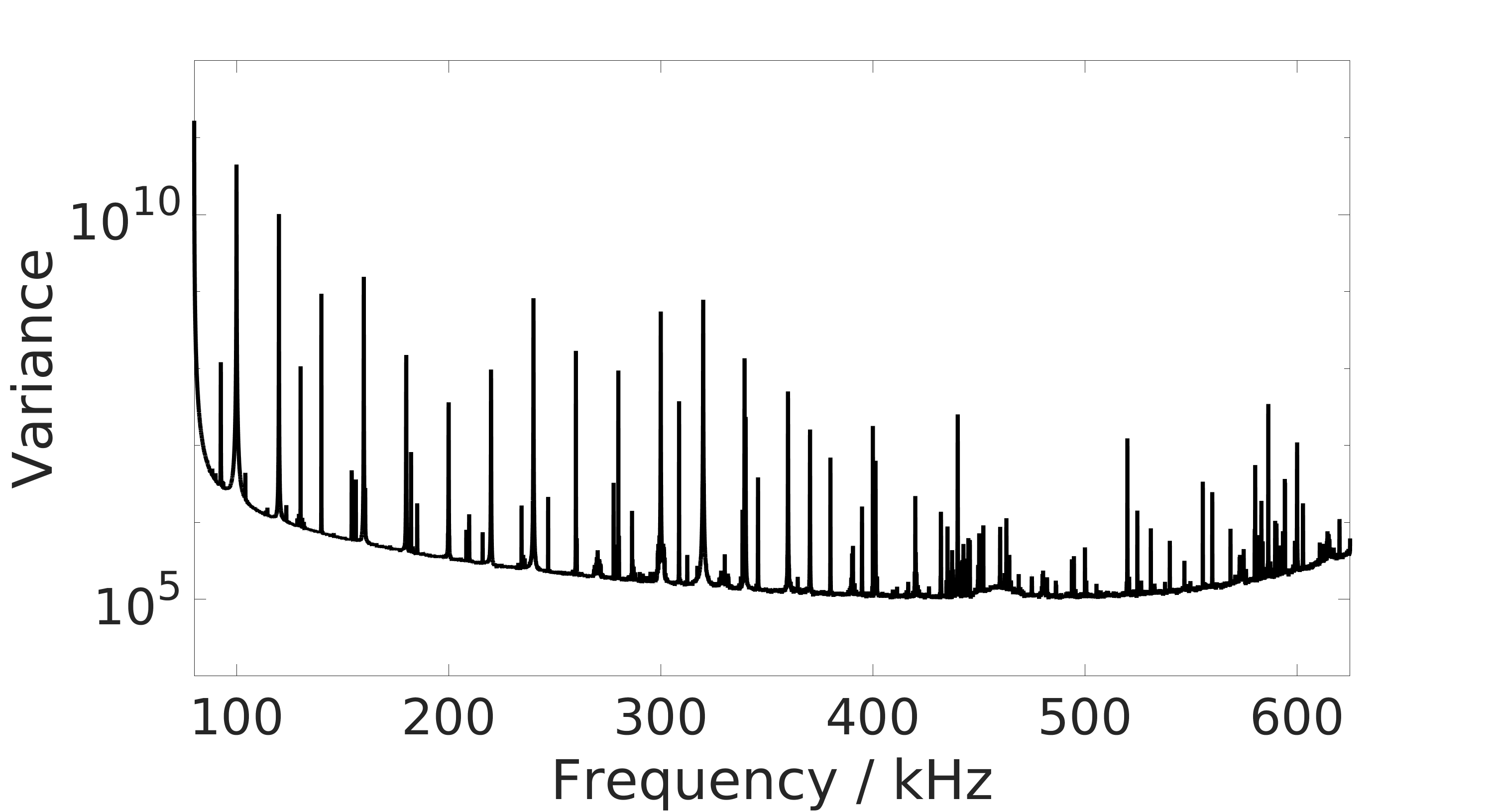} \\
 \end{minipage}
\begin{minipage}{0.3\textwidth}
\centering
$z$-coil \\
\includegraphics[width=\textwidth]{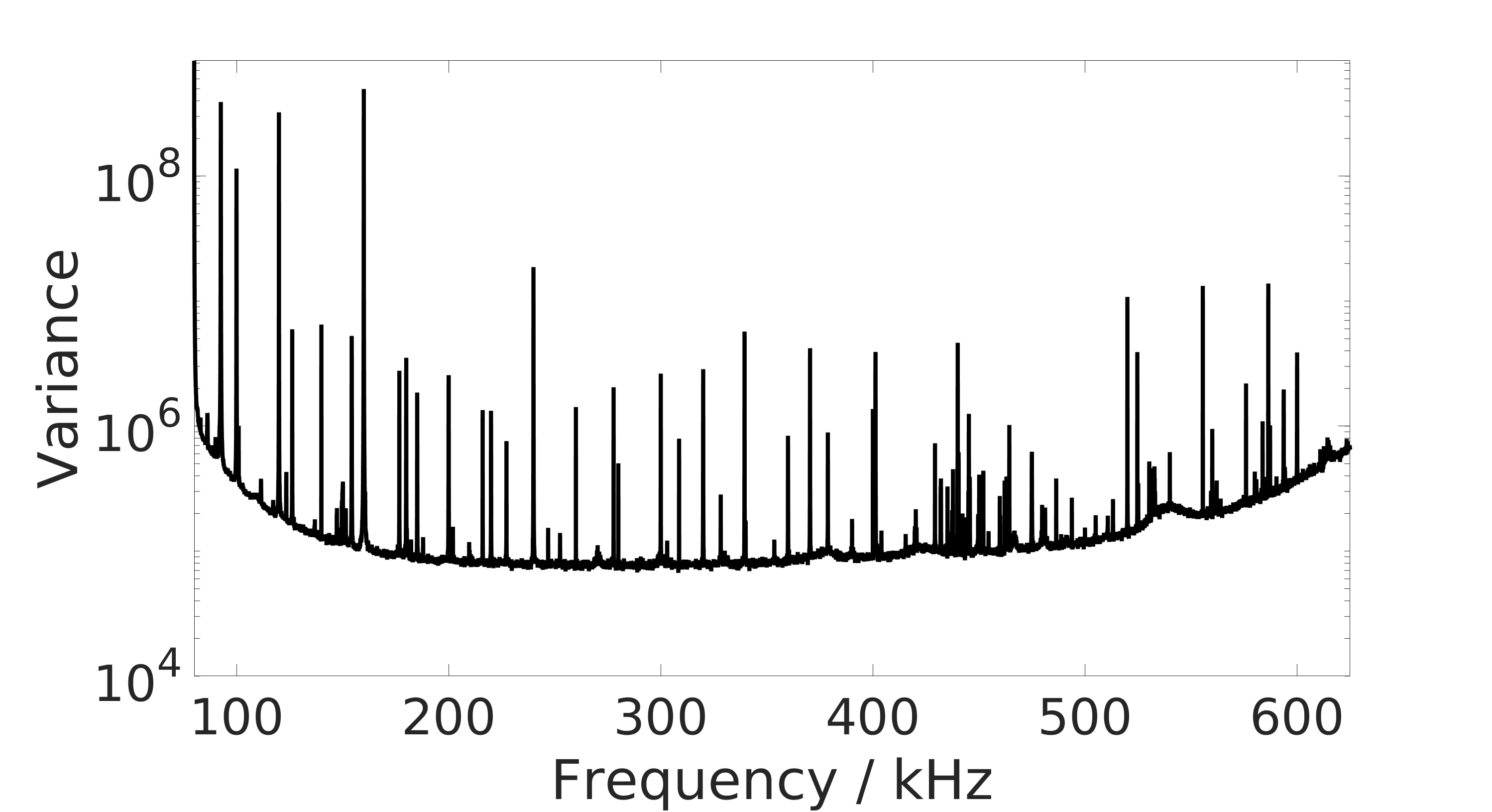} \\
\includegraphics[width=\textwidth]{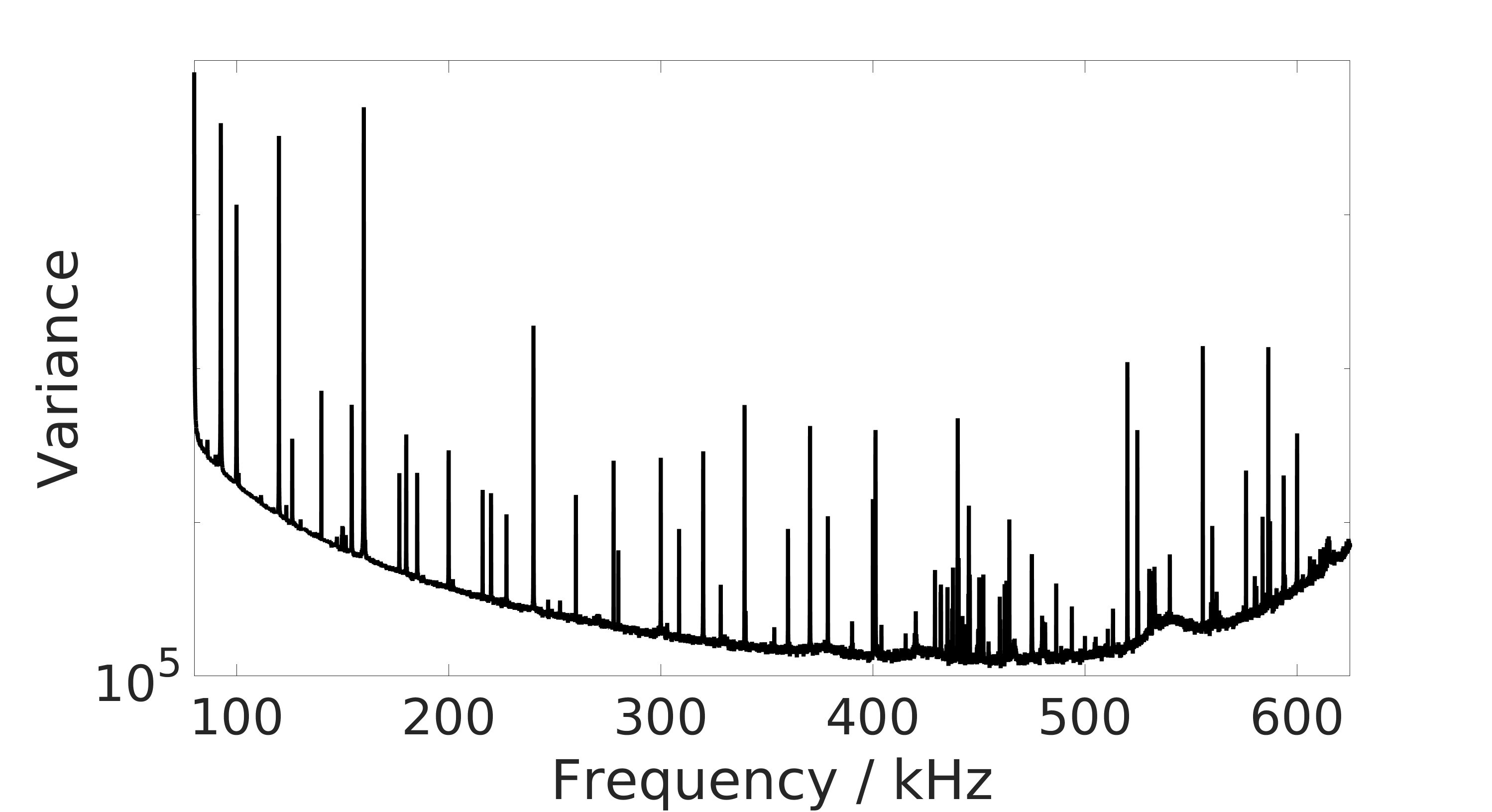} \\
 \end{minipage}
\caption{Variance structure of the diagonal covariance matrix $C$. Visualized
individually for each receive coil with respect to the frequency; real part (top),
imaginary part (bottom).}
\label{fig:cov_diag}
\end{figure}

The STD reconstructions for the non-whitened and whitened cases are shown in Fig. \ref{fig:std_reco}, for three
different $\alpha$ values, including the cases of over, medium and under regularization, respectively. For the medium and small
$\alpha$ values, the reconstructed phantoms for both non-whitened and whitened cases are of similar quality; see
the middle and right columns of Fig. \ref{fig:std_reco}. However, a closer inspection shows that the reconstruction
in the non-whitened case suffers from pronounced background artifacts, whereas, in the whitened case, the artifacts
can be reduced even for much smaller $\alpha$ values. Meanwhile, for a large $\alpha$ value ($\alpha=9.77\times
10^{-2}$) (the left column), the background artifacts disappear from the reconstructions in the non-whitened case
but also the reconstructed cone is overly smoothed, due to over-regularization introduced by the penalty; and these
observations hold also for the whitened case. Thus, the whitening step makes the reconstruction algorithm more robust to the
choice of the $\alpha$ value, which is highly desirable in practice, since its optimal choice is generally very challenging.

In summary, the STD reconstructions in Fig. \ref{fig:std_reco} have similar quality in the whitened
and non-whited cases, except some smaller background artifacts for the non-whitened approach.

\newcommand{\PLH}{{\mkern-2mu\times\mkern-2mu}}

\begin{figure}[hbt!]
 \begin{tabular}{ccc|ccc}
\multicolumn{3}{c|}{Non-whitened} & \multicolumn{3}{c}{Whitened}\\
\hline
$\alpha=9.77 \PLH 10^{-2}$ & $\alpha=3.05 \PLH 10^{-3}$ &$\alpha=9.54 \PLH 10^{-5}$ & $\alpha=9.77 \PLH 10^{-2}$& $\alpha=3.05 \PLH 10^{-3}$ & $\alpha=9.54 \PLH 10^{-5}$ \\
\hline
 \includegraphics[width=0.14\textwidth]{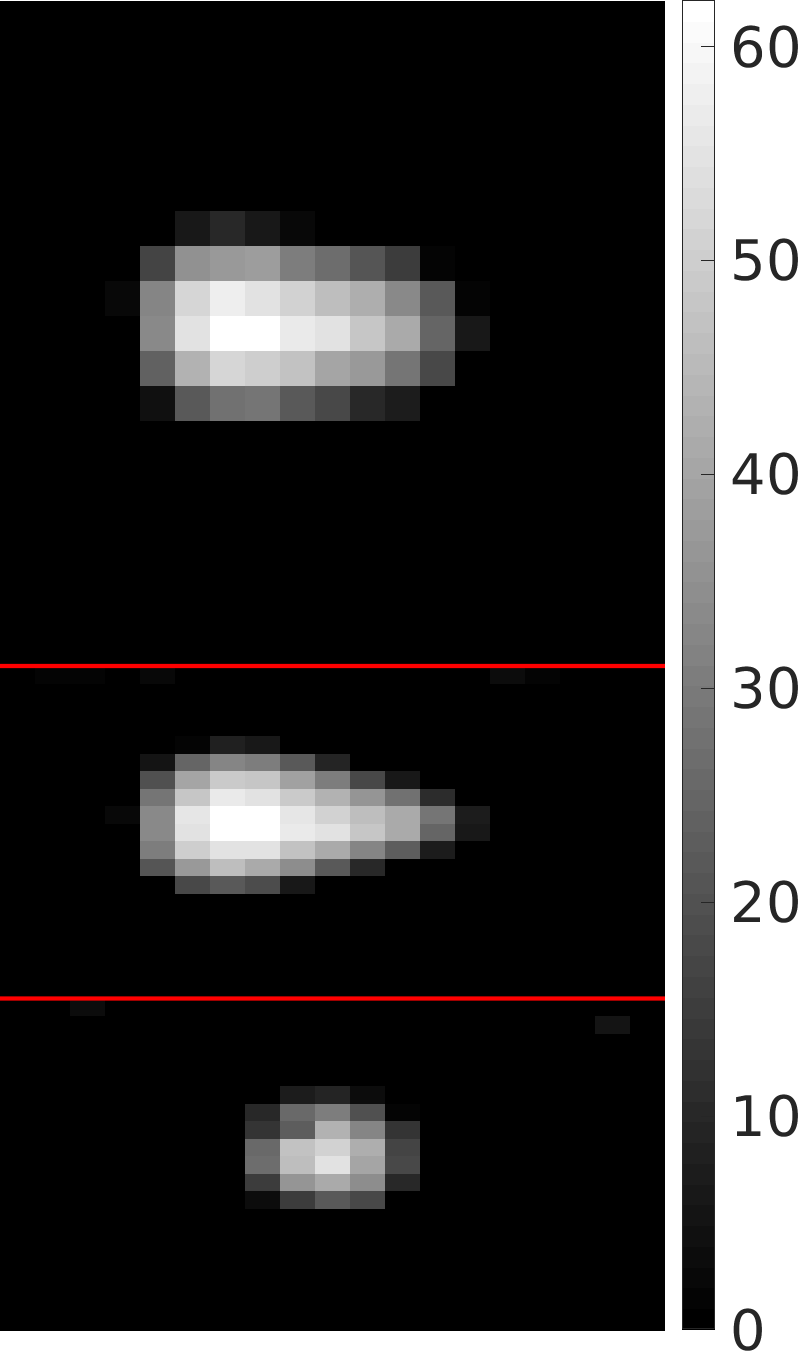}&
 \includegraphics[width=0.14\textwidth]{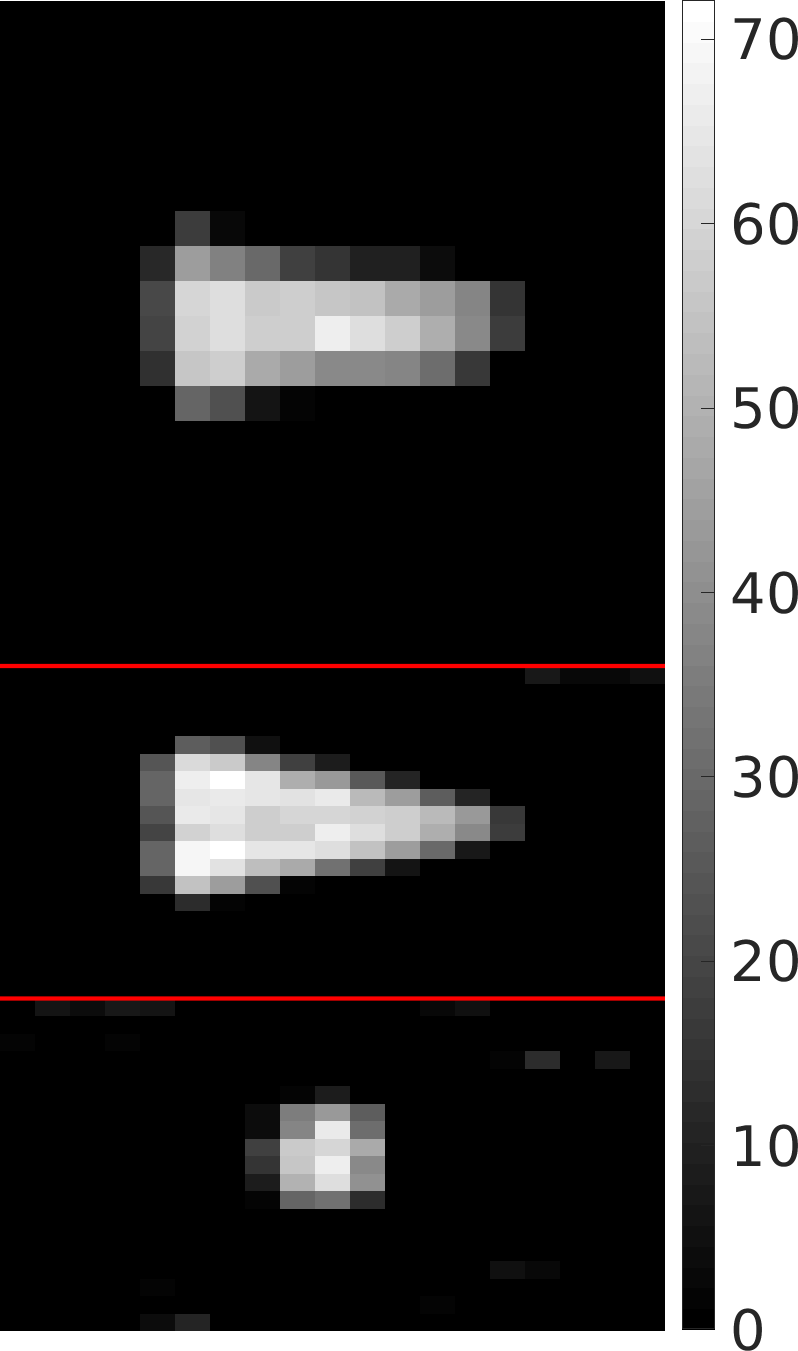}&
 \includegraphics[width=0.14\textwidth]{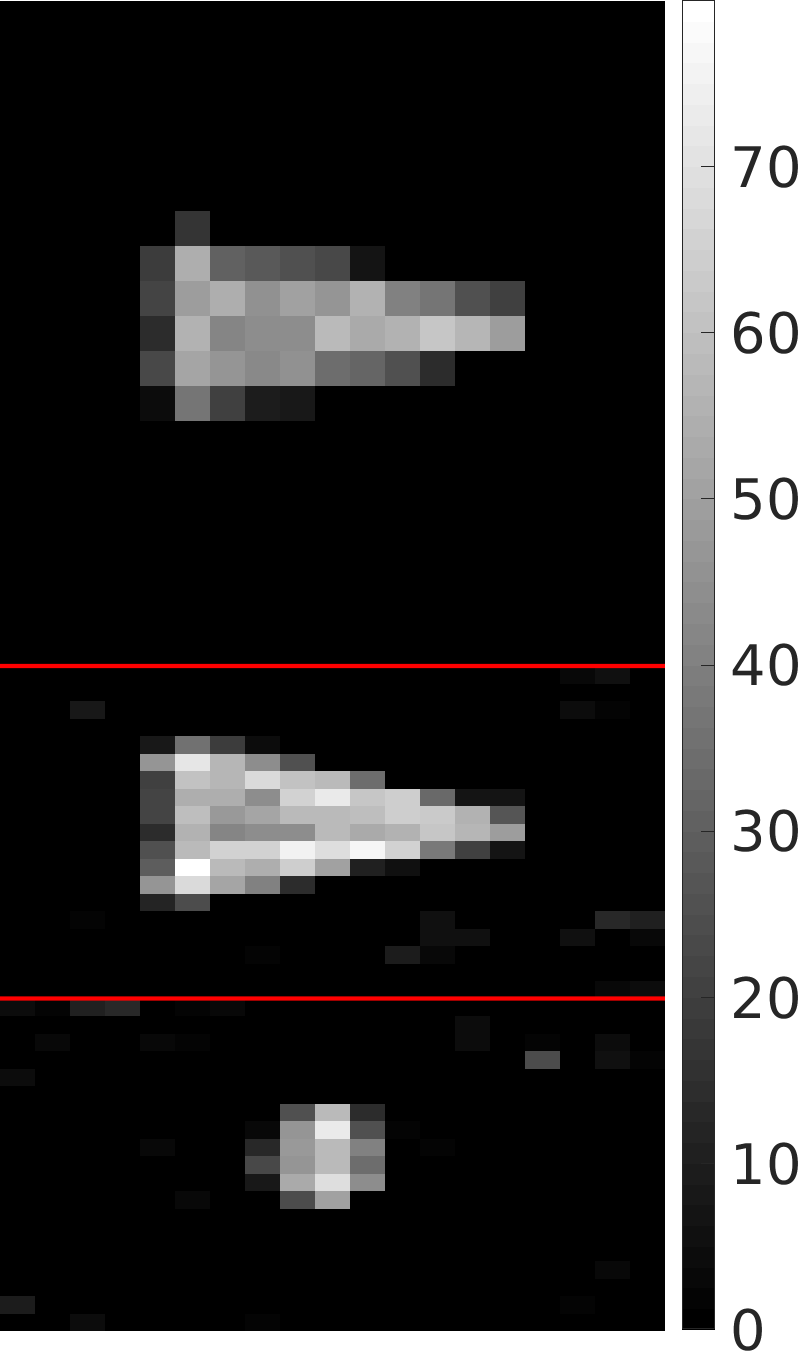}&
 \includegraphics[width=0.14\textwidth]{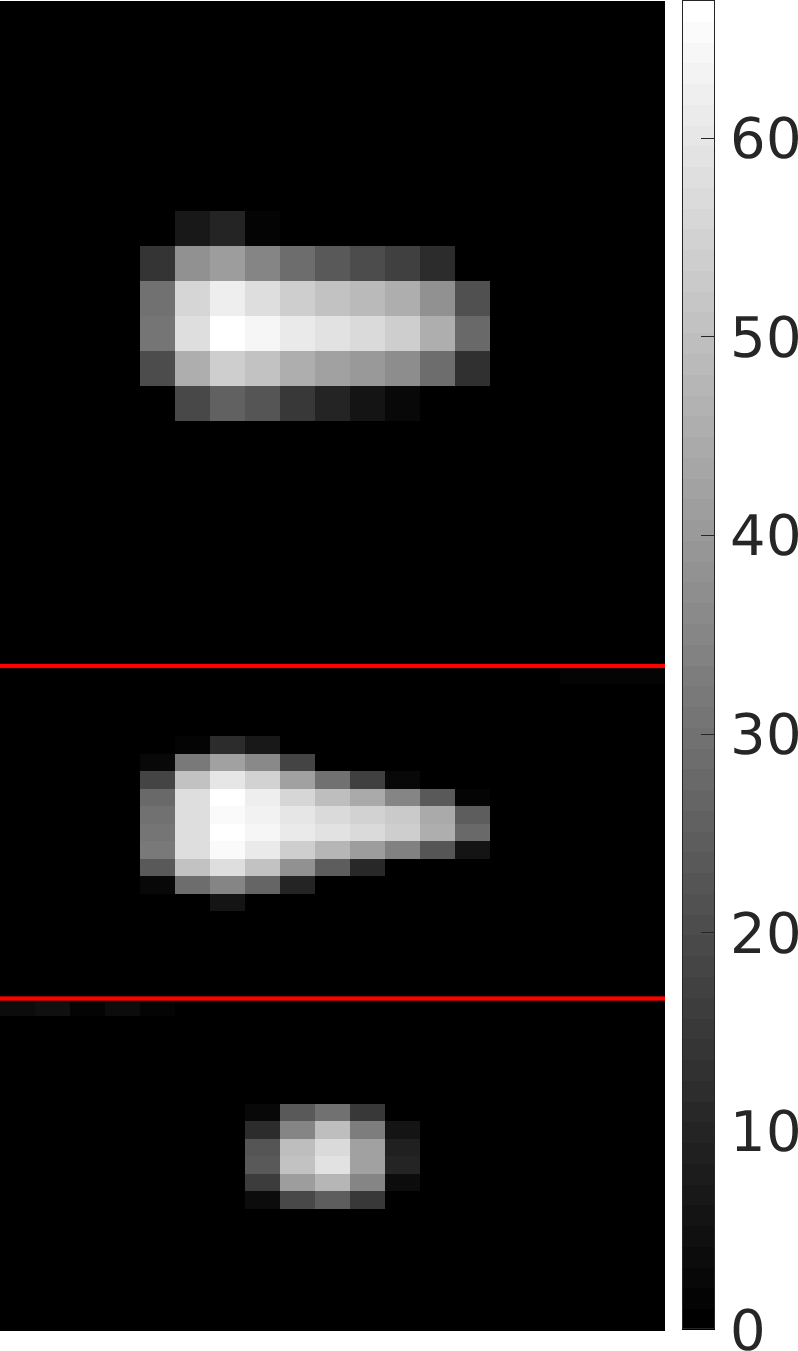}&
 \includegraphics[width=0.14\textwidth]{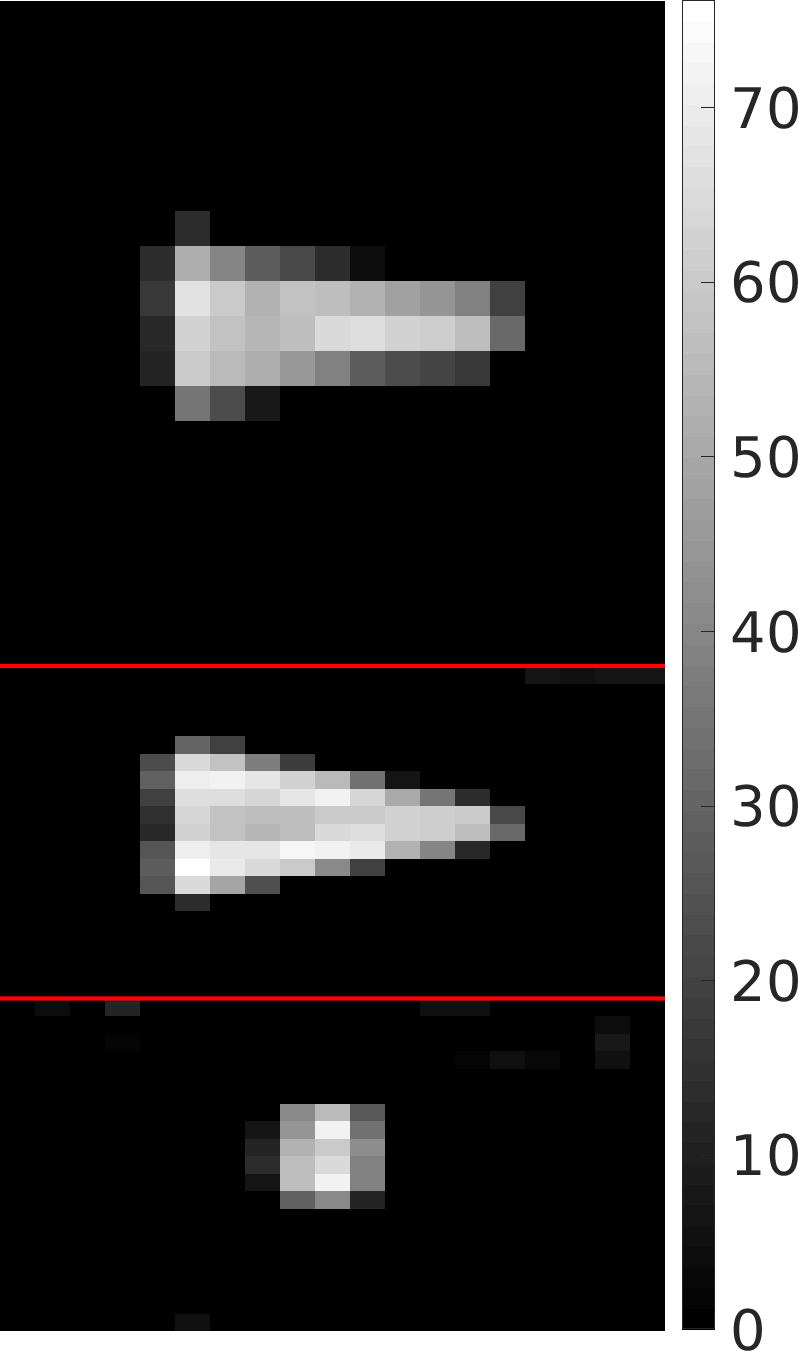}&
 \includegraphics[width=0.14\textwidth]{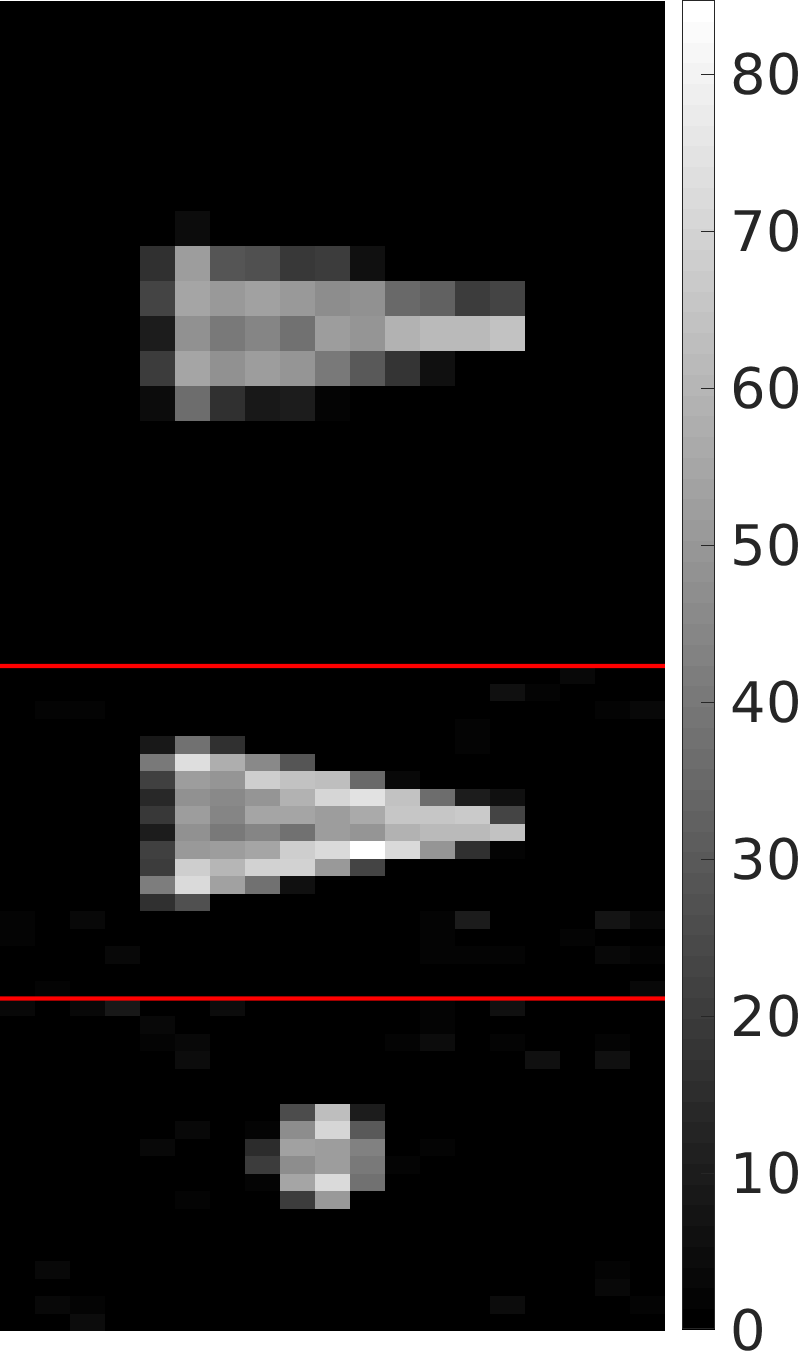}\\
\end{tabular}
\caption{STD reconstruction non-whitened/whitened of the ``shape'' phantom. Illustration structure as in Fig.
\ref{fig:phantom_shape}(right). Concentration in mmol/l.    }
\label{fig:std_reco}
\end{figure}

\subsection{Acceleration via randomized SVD}

\begin{figure}[hbt!]
\centering
\begin{minipage}{8cm}
\includegraphics[width=8cm]{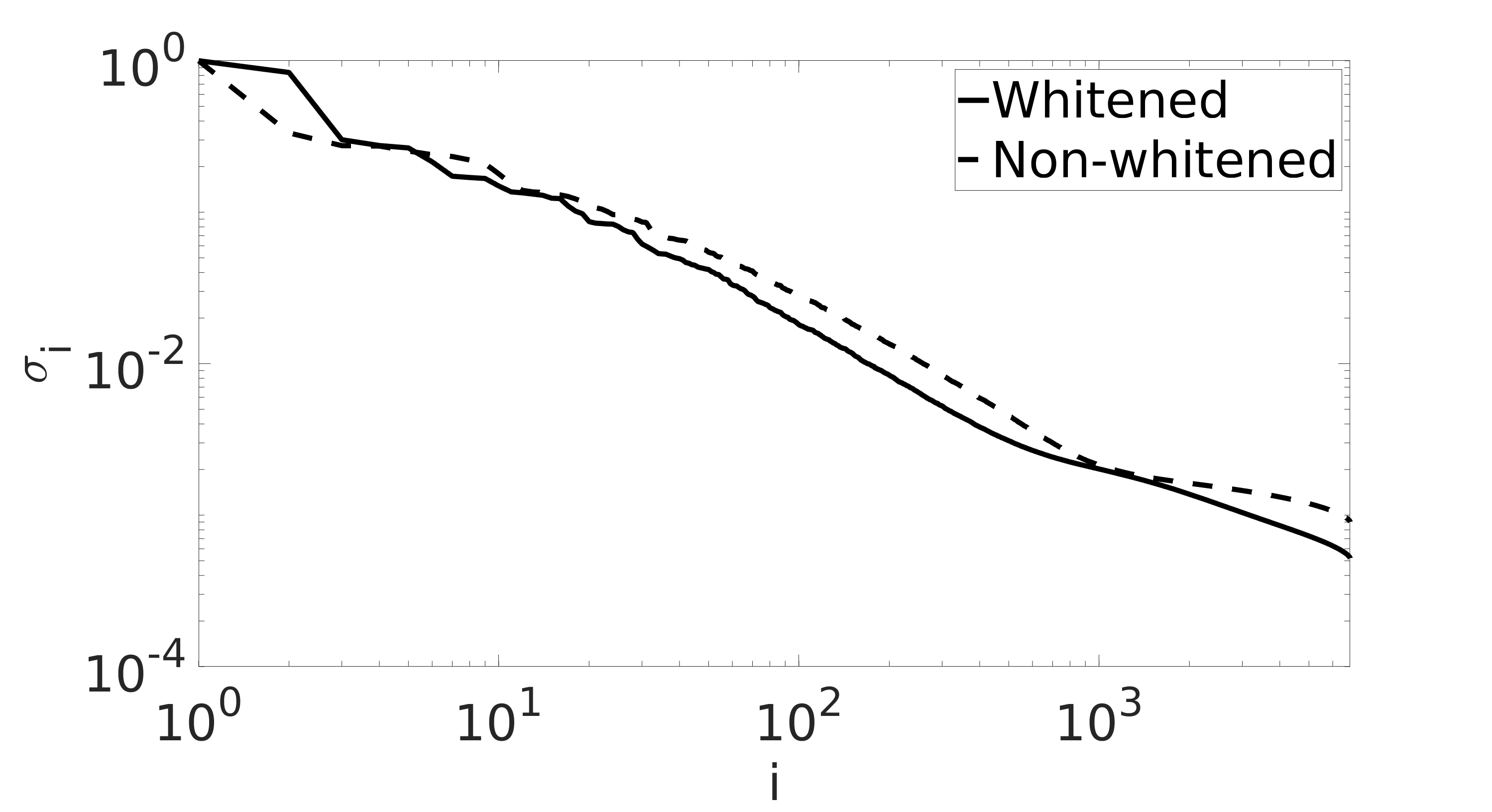}
\end{minipage}
\begin{minipage}{8cm}
 \begin{tabular}{l|cc}
 &\multicolumn{2}{c}{$(\sum_{i=1}^k \sigma_i^2)/ (\sum_{i=1}^m \sigma_i^2)$} \\
$k$ & Whitened & Non-whitened \\
\hline
500 & 99.23 & 99.61 \\
1000 & 99.46 & 99.73 \\
1500 & 99.55 & 99.80 \\
2000 &  99.61 &  99.85 \\
\hline
 \end{tabular}
\end{minipage}
\caption{Illustration of singular value decay (left) of the system $A\in \mathbb{R}^{n\times m}$.
Table (right) including energy percentage $(\sum_{i=1}^k \sigma_i^2)/ (\sum_{i=1}^m \sigma_i^2)$
for low rank approximations of $A\in \mathbb{R}^{n\times m}$; relevant for constructing randomized
SVD approximation. }
\label{tab:sv_energy}
\end{figure}

Now we illustrate randomized SVD for accelerating the Kaczmarz algorithm, and discuss its interplay with whitening.
In Fig. \ref{tab:sv_energy}, we plot the singular values (SVs) of the non-whitened and whitened system matrices. The
SVs decay algebraically with comparable decay rates for both cases, indicating that the MPI inverse 
problem is mildly ill-posed. A useful quantitative measure of the low-rank approximation is the percentage 
$(\sum_{i=1}^k \sigma_i^2)/ (\sum_{i=1}^m \sigma_i^2)$, which roughly corresponds to the optimal error bound 
on the rank-$k$ approximation in the Frobenius norm. According to the table in Fig. \ref{tab:sv_energy}, five 
hundred SVs capture nearly all the energies for both whitened and non-whitened cases and thus can give an accurate 
low rank approximation. 
Interestingly,
in the whitened case, the same number of SVs can capture more energy percentage than that for the non-whitened case,
and thus whitening may yield slightly more accurate low-rank approximations and hold more potential for speedup.
The decay behavior justifies the use of the rSVD approach for accelerating the algorithm in Section \ref{ssec:rsvd}.
In MPI, the SV decay was rigorously proved for simplified models in \cite{ErbWeinmann:2018} and \cite{KluthJinLi:2017}
for the one-dimensional and multi-dimensional cases, respectively.

In view of the SV decay in Fig. \ref{tab:sv_energy} and the energy percentage shown in the table therein, two truncation numbers,
i.e., $k=500$ and $k=1000$, are employed below for the accelerated reconstruction. The numerical results are
presented in Figs. \ref{fig:methods_alpha1}--\ref{fig:methods_alpha3}
for three different $\alpha$ values (as were used in Fig. \ref{fig:std_reco}), which represent over, medium
and under- regularization, respectively. In these different regimes, the behavior of the reconstruction algorithms
differs slightly.

With the $\alpha$ value properly chosen (i.e., $\alpha=9.77\times10^{-2}$), rSVD1 can provide reasonable
reconstructions in the whitened case for both $k$ values (cf. Fig. \ref{fig:methods_alpha2}), and the reconstructions
are comparable with that by STD in Fig. \ref{fig:std_reco}. However, in the non-whitenend case, slight
blurring appears in the reconstructed cone.  With the choice $k=1000$, SNR gives comparable reconstructions,
but with $k=500$, either significant distortions or smoothing appear in the reconstructions for all three $\alpha$ values,
and thus the choice $k=500$ seems insufficient to capture the essential information of the data. Thus, 
rSVD is more effective in compressing the data than SNR. Somewhat surprisingly, rSVD2, the simplest and fastest
approach, can also provide reasonable reconstructions of comparable quality, even for the over-regularized case, but
in the non-whitenend case, it gives reconstructions containing pronounced background artifacts, which, however, can be
greatly reduced in the presence whitening; see Figs. \ref{fig:methods_alpha2} and \ref{fig:methods_alpha3}. This clearly
shows the significant potential of the strategy whitening + SVD2 for MPI reconstruction.

\begin{figure}
\begin{tabular}{ccc|ccc}
\multicolumn{3}{c|}{Non-whitened} & \multicolumn{3}{c}{Whitened}\\
\hline
SNR & rSVD1 & rSVD2 & SNR & rSVD1 & rSVD2 \\
\hline
\multicolumn{3}{l|}{$k=1000$} & \multicolumn{3}{l}{}\\
 \includegraphics[width=0.14\textwidth]{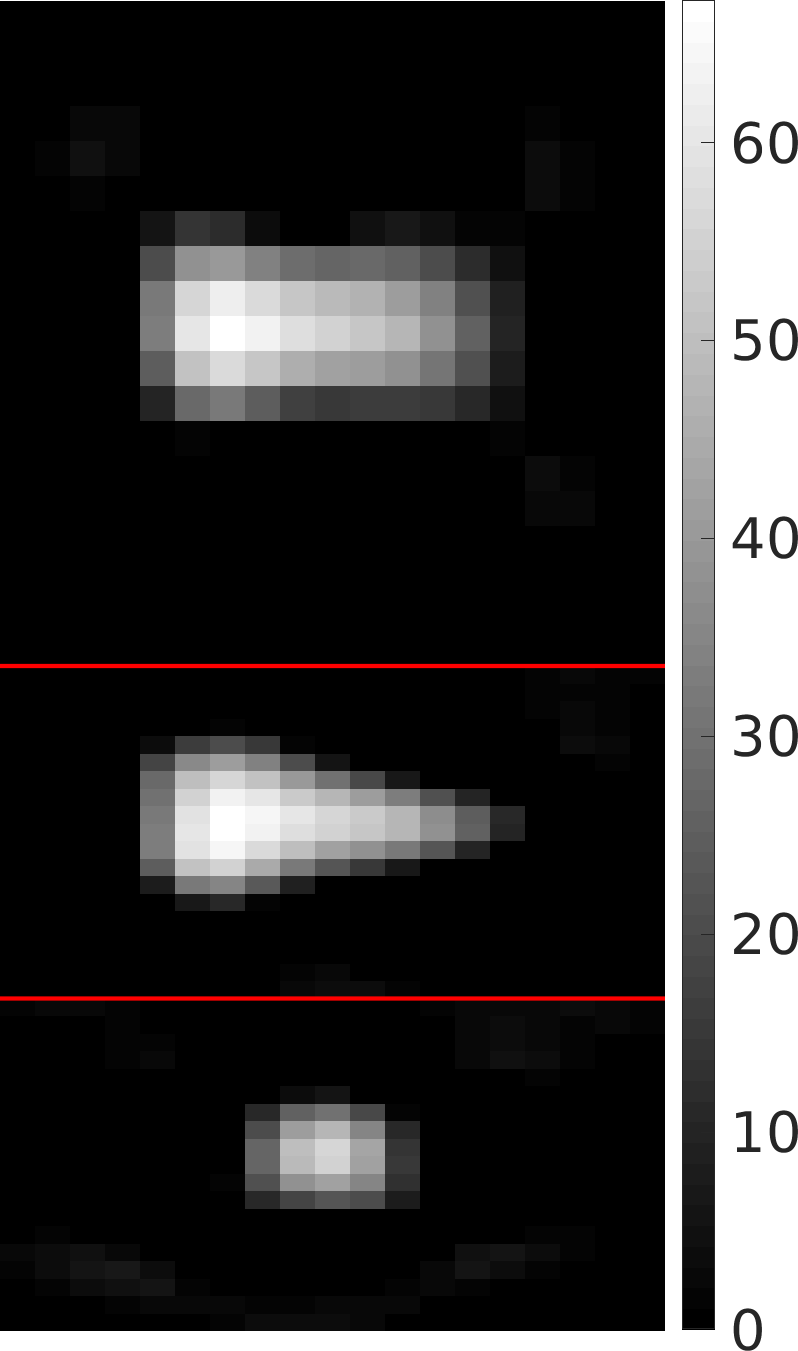}&
 \includegraphics[width=0.14\textwidth]{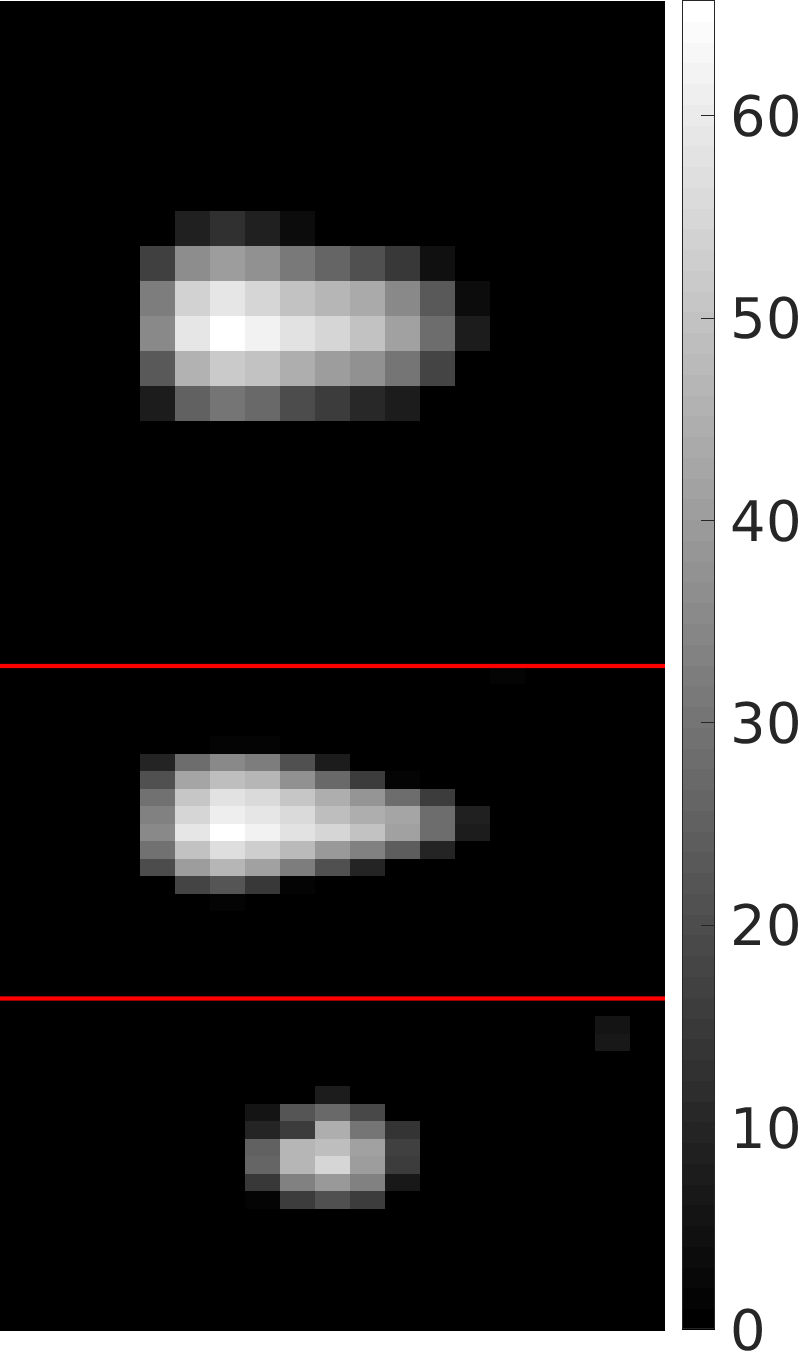}&
 \includegraphics[width=0.14\textwidth]{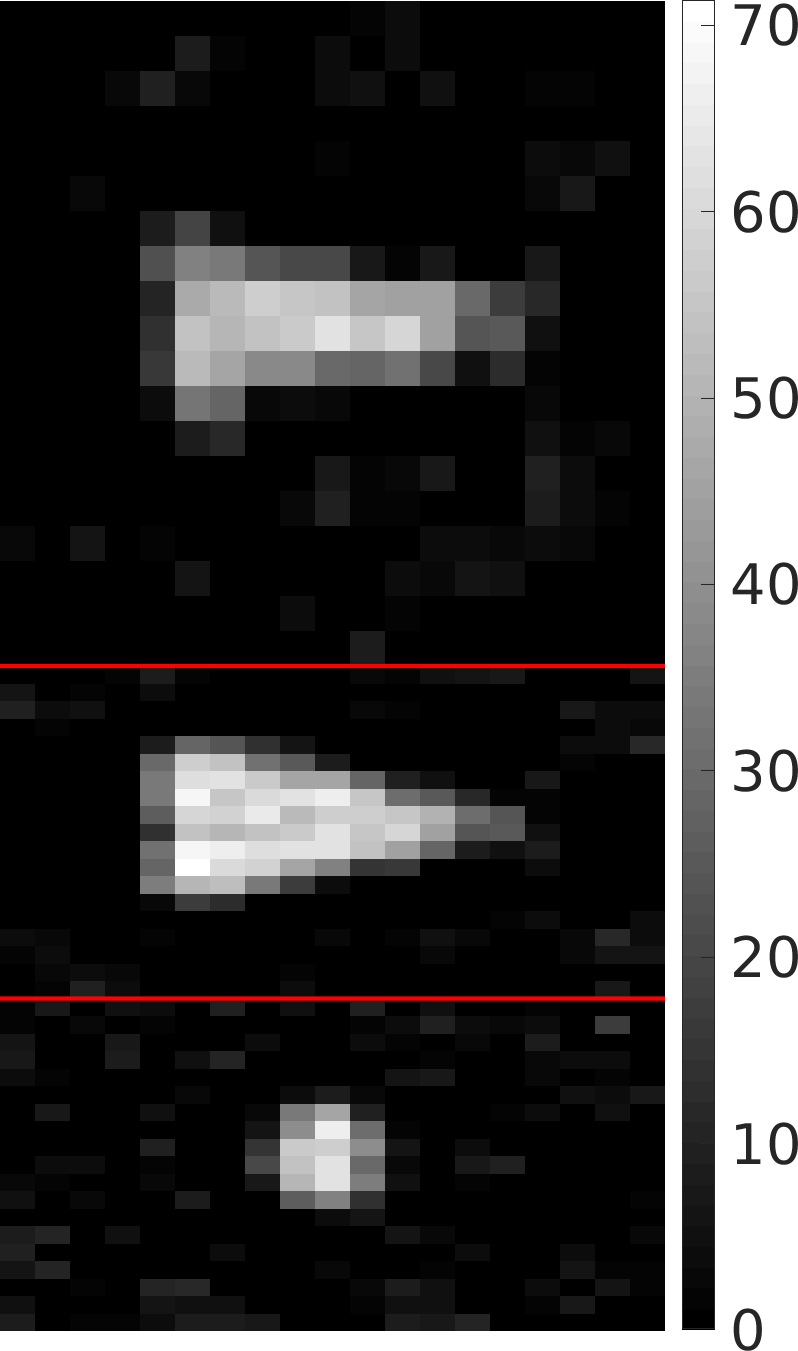}&
 \includegraphics[width=0.14\textwidth]{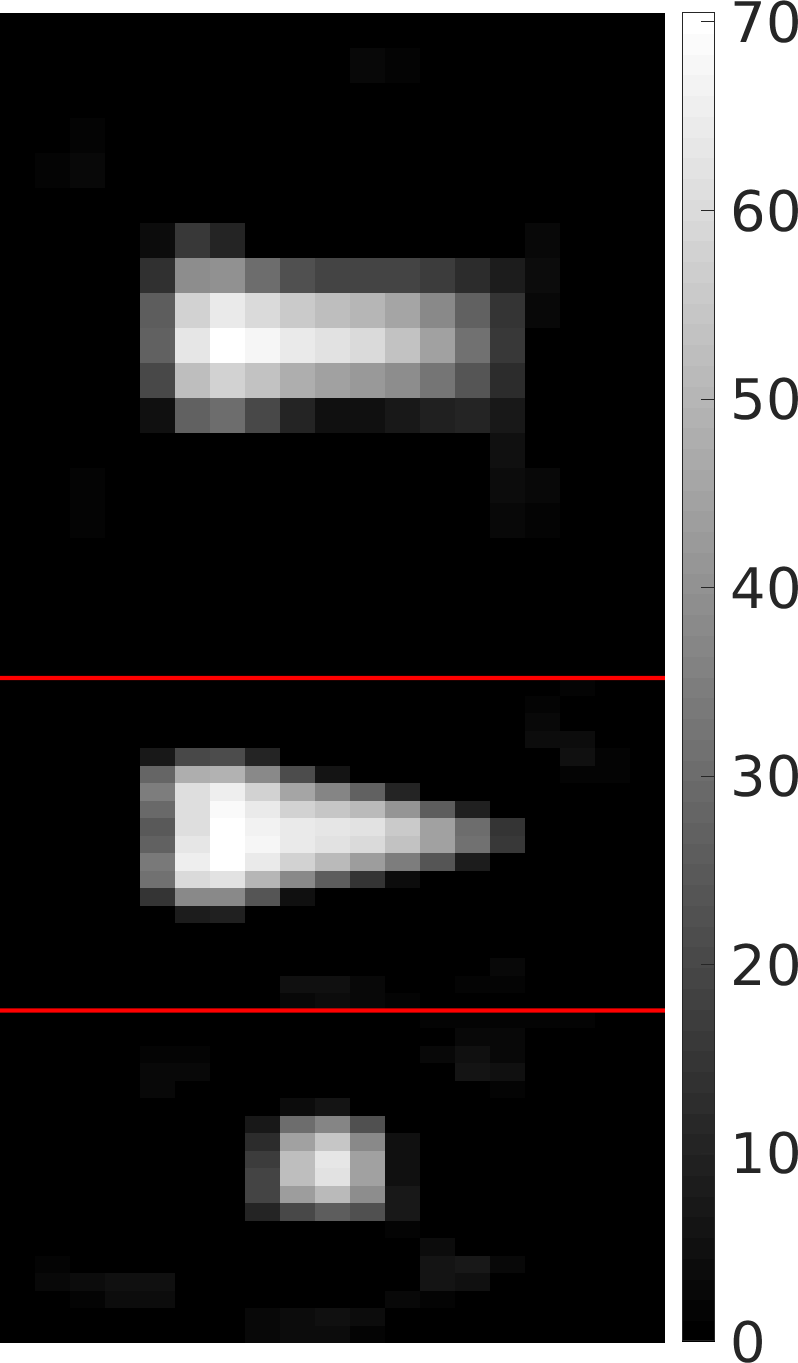}&
 \includegraphics[width=0.14\textwidth]{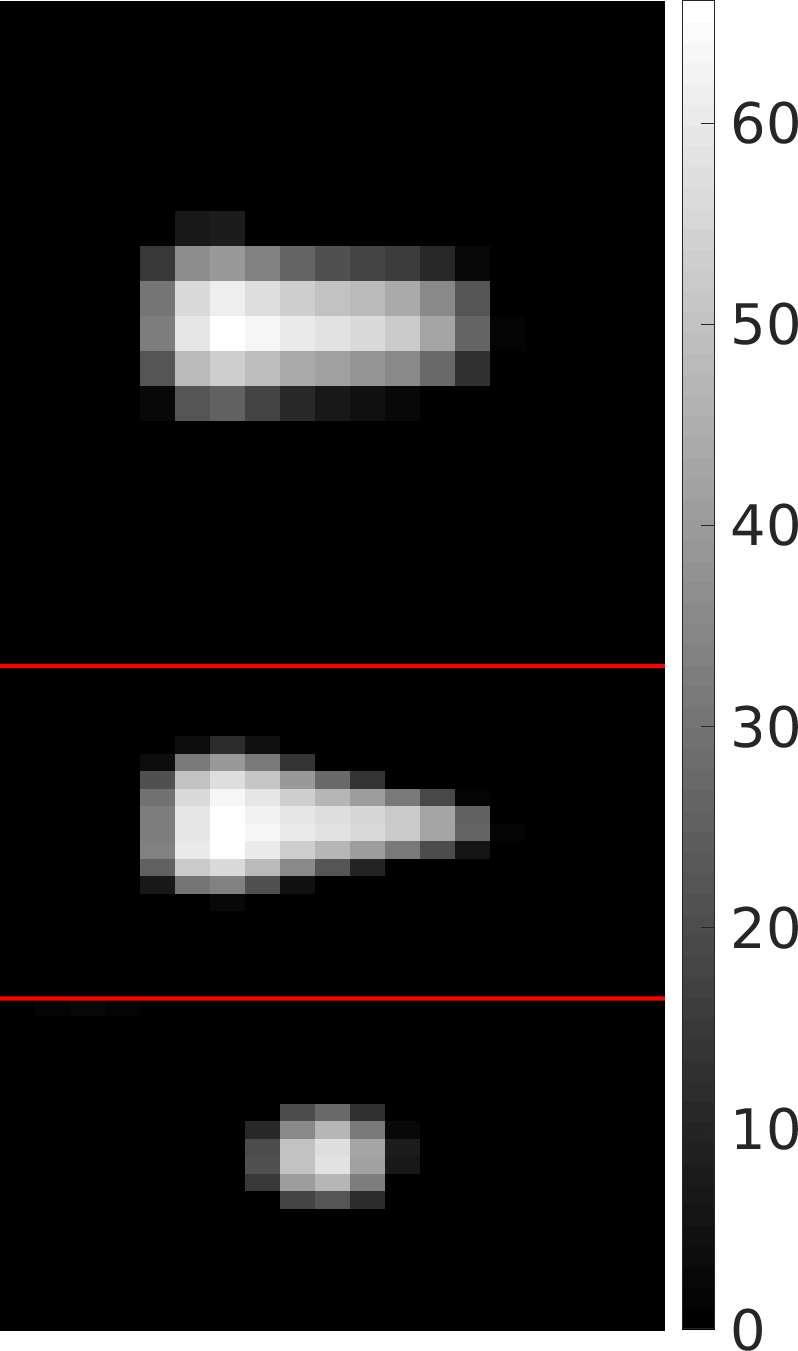}&
 \includegraphics[width=0.14\textwidth]{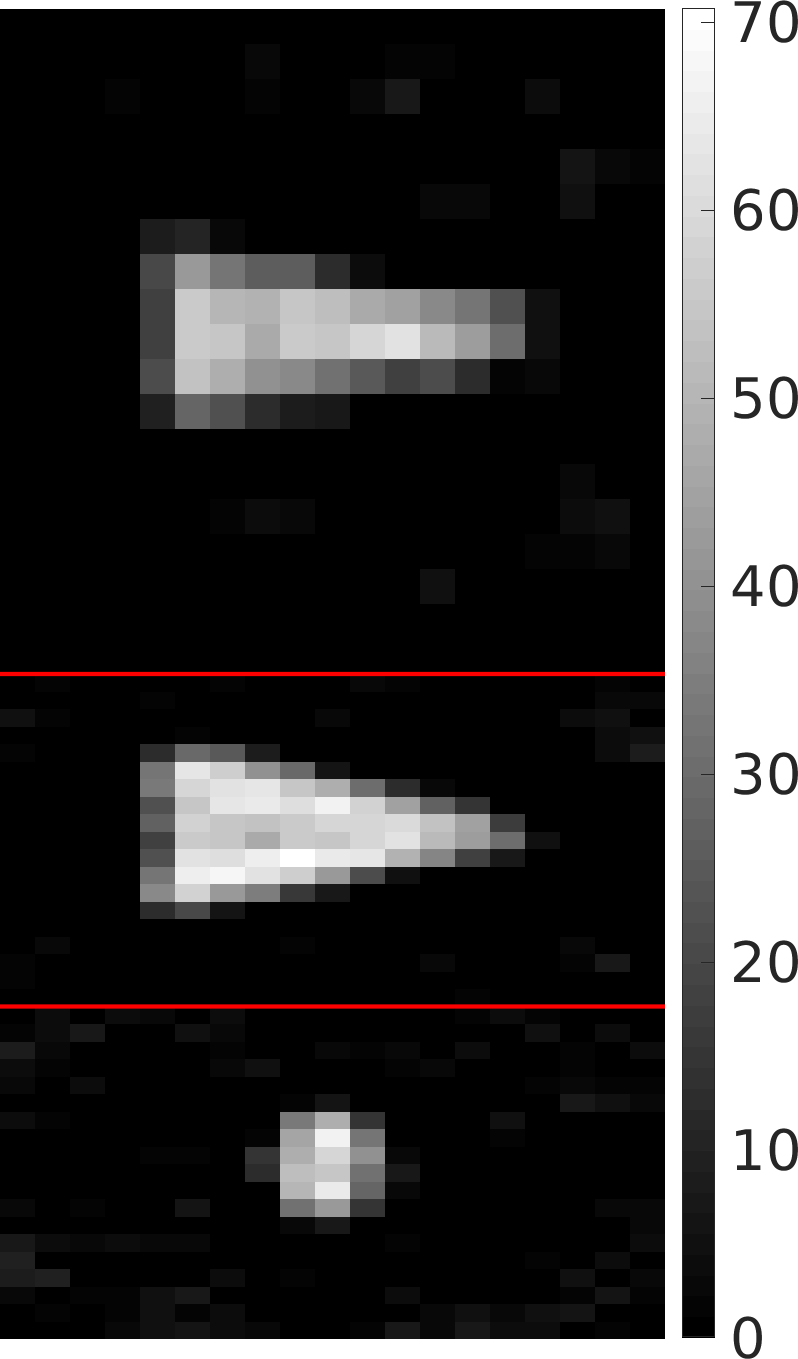}\\
\hline
\multicolumn{3}{l|}{$k=500$} & \multicolumn{3}{l}{}\\
 \includegraphics[width=0.14\textwidth]{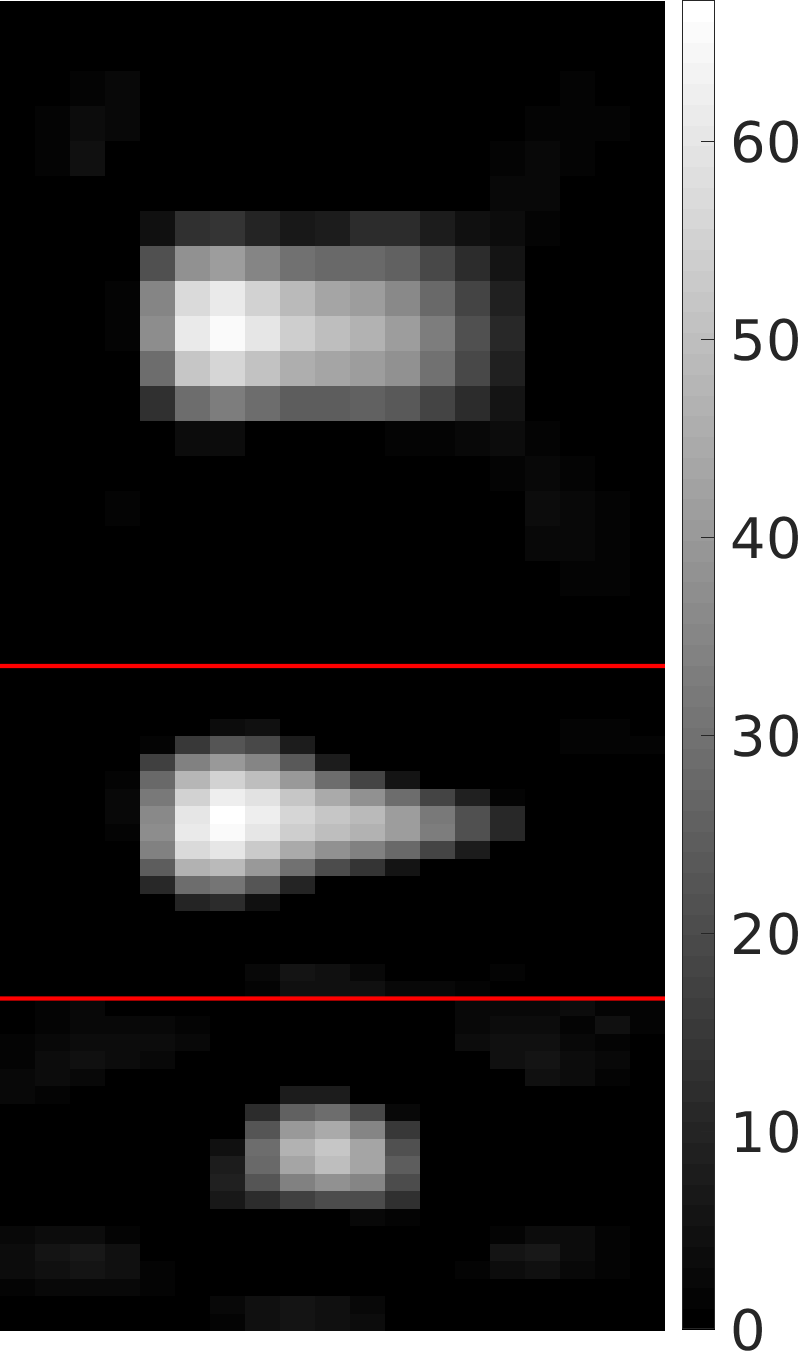}&
 \includegraphics[width=0.14\textwidth]{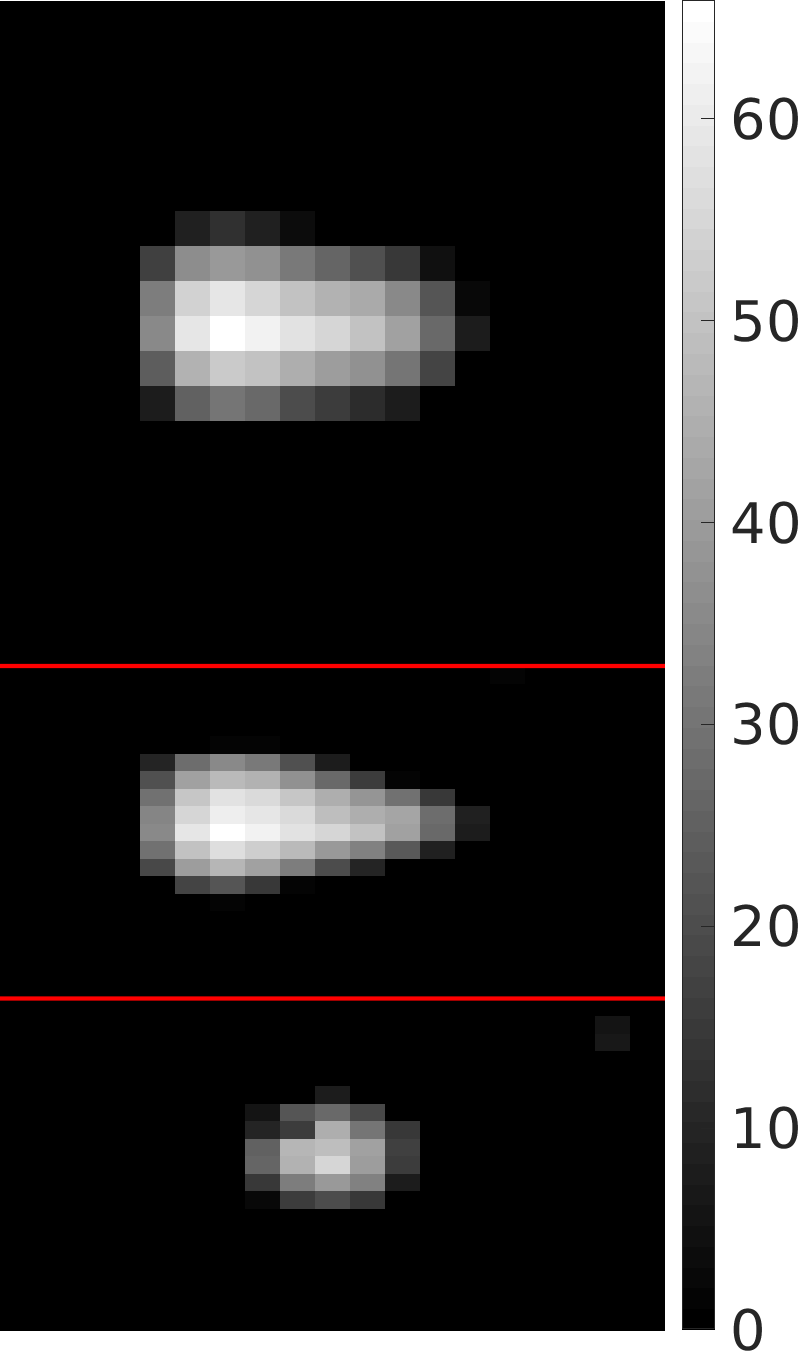}&
 \includegraphics[width=0.14\textwidth]{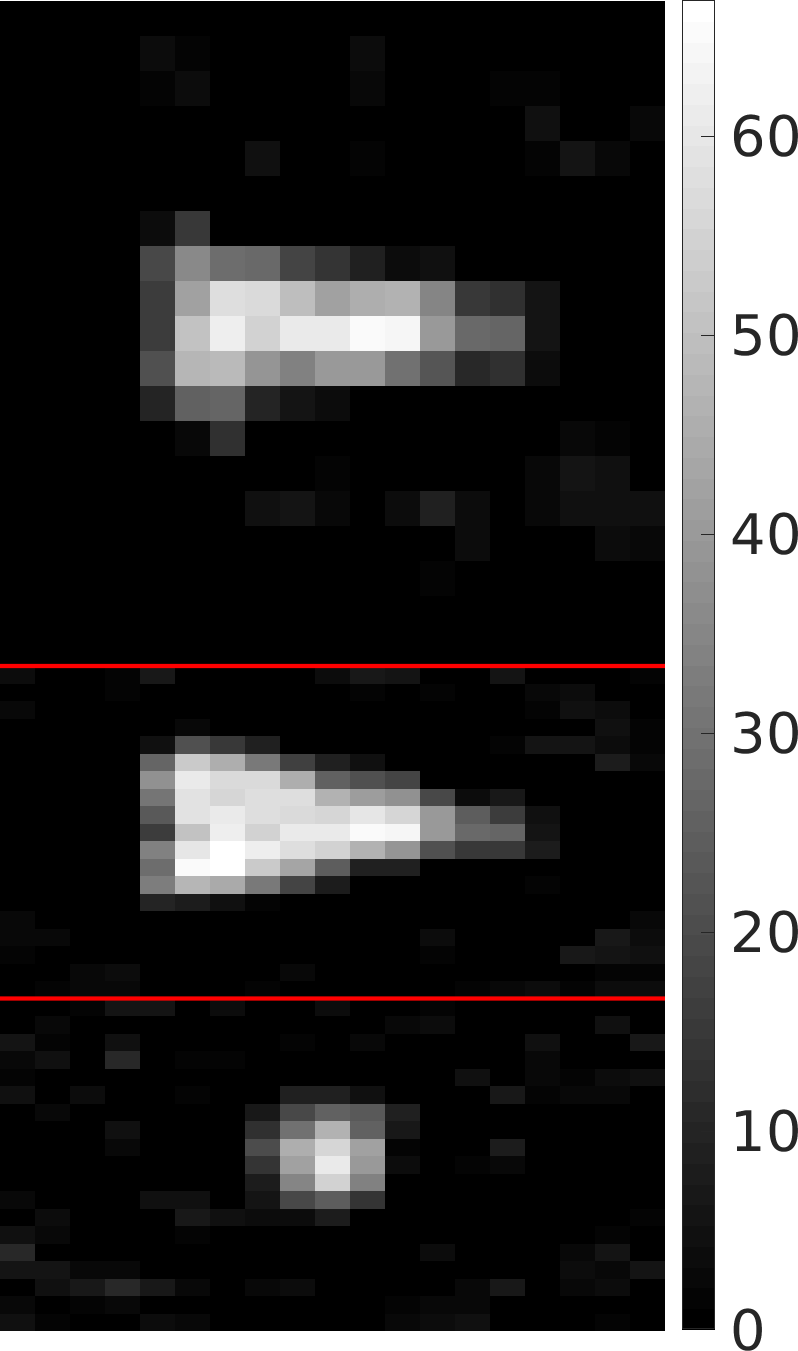}&
 \includegraphics[width=0.14\textwidth]{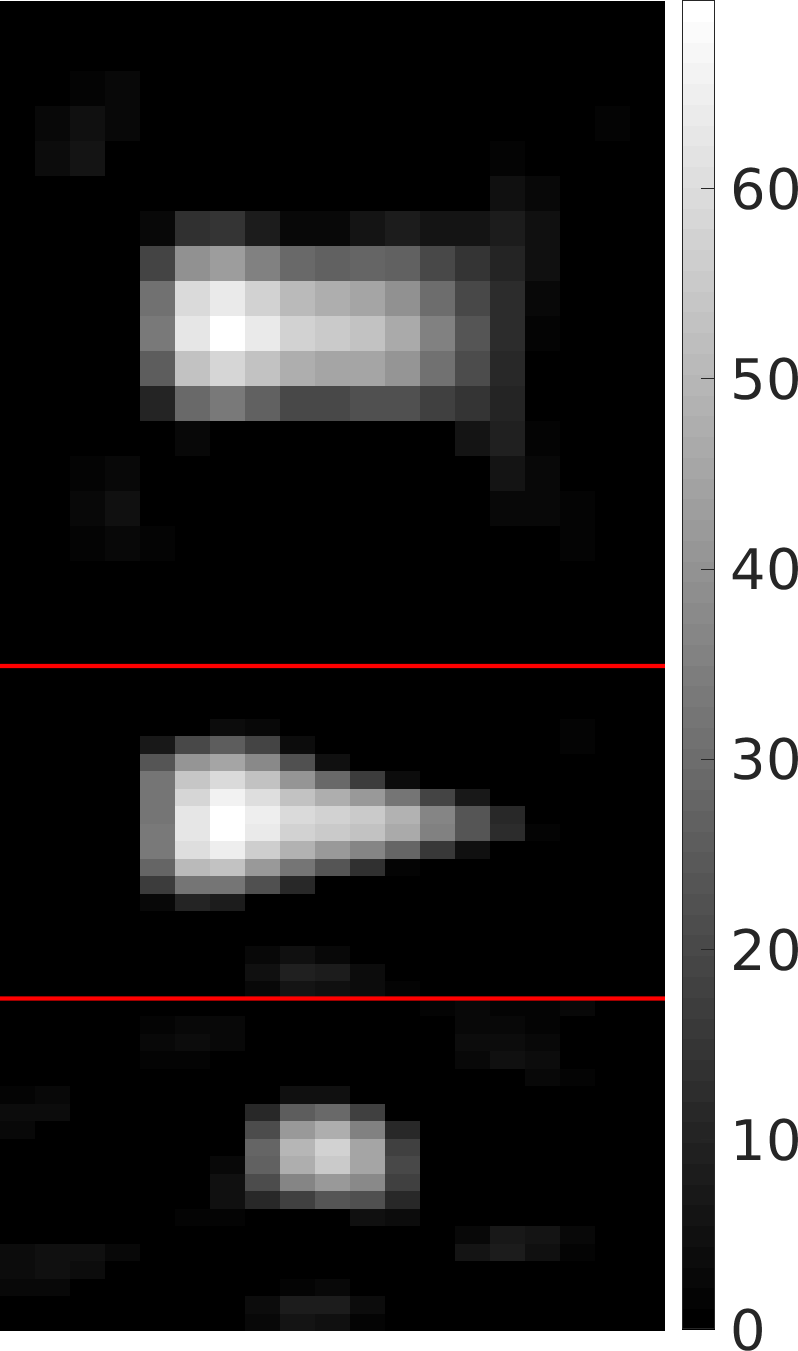}&
 \includegraphics[width=0.14\textwidth]{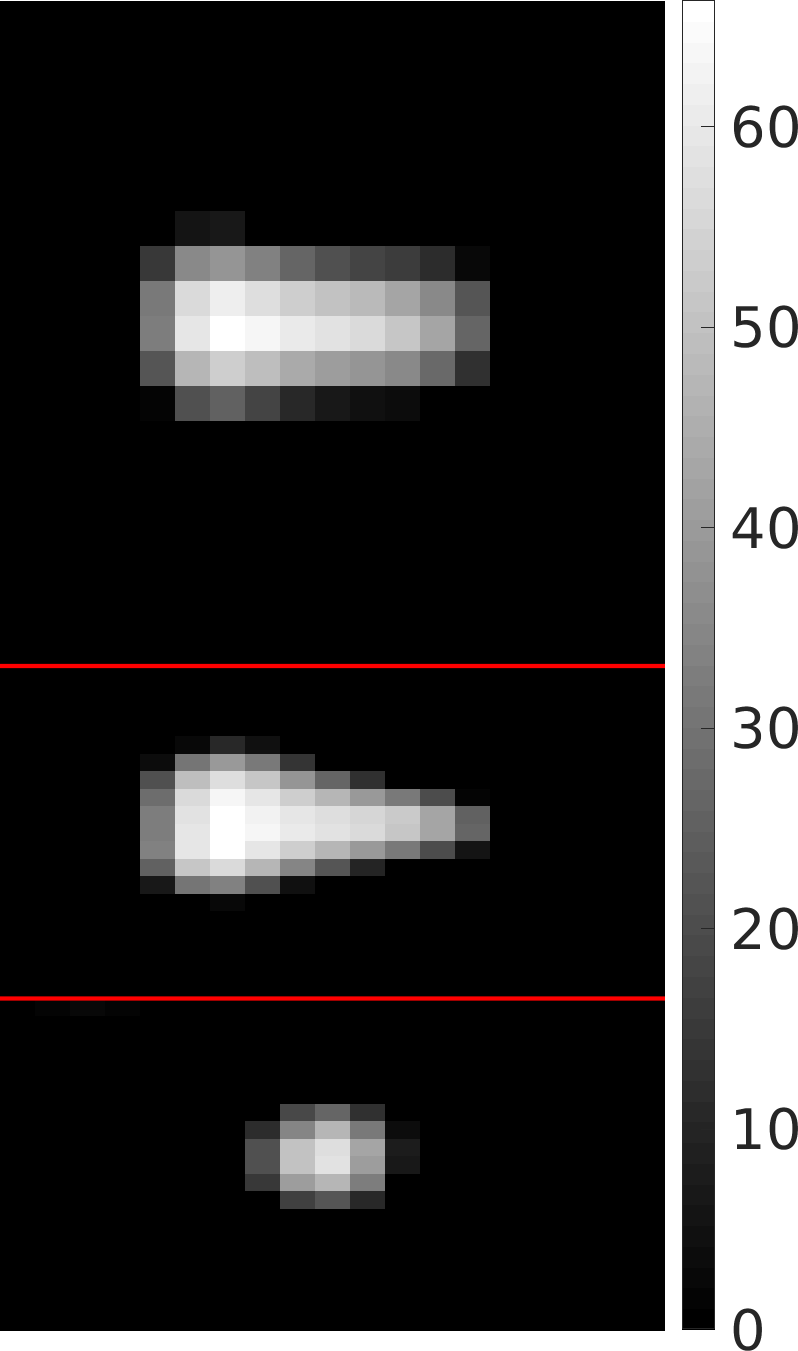}&
 \includegraphics[width=0.14\textwidth]{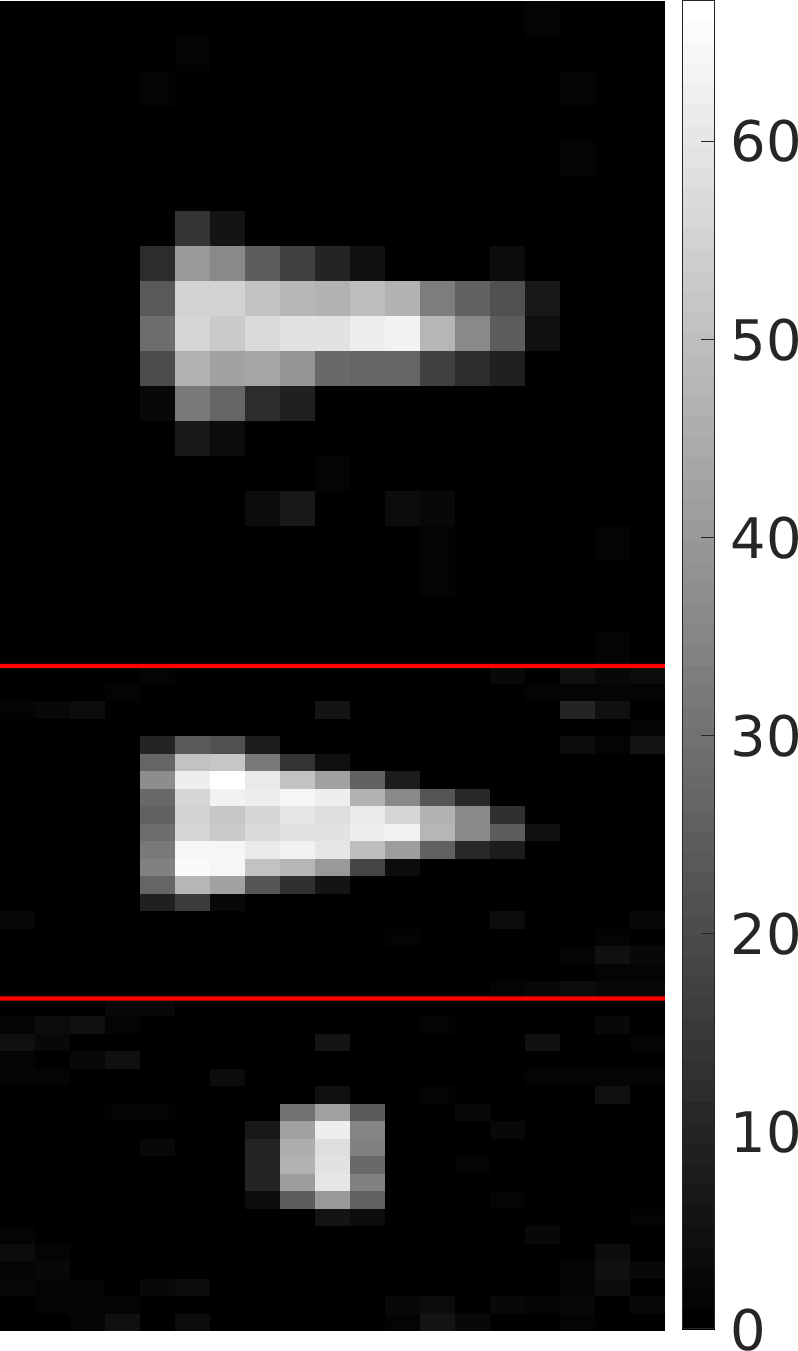}
\end{tabular}
\caption{``Shape'' phantom reconstructions (SNR, rSVD1, rSVD2) for $\alpha=9.77 \PLH 10^{-2}$. Concentration in mmol/l.} 
\label{fig:methods_alpha1}
\end{figure}

\begin{figure}
\begin{tabular}{ccc|ccc}
\multicolumn{3}{c|}{Non-whitened} & \multicolumn{3}{c}{Whitened}\\
\hline
SNR & rSVD1 & rSVD2 & SNR & rSVD1 & rSVD2 \\
\hline
\multicolumn{3}{l|}{$k=1000$} & \multicolumn{3}{l}{}\\
 \includegraphics[width=0.14\textwidth]{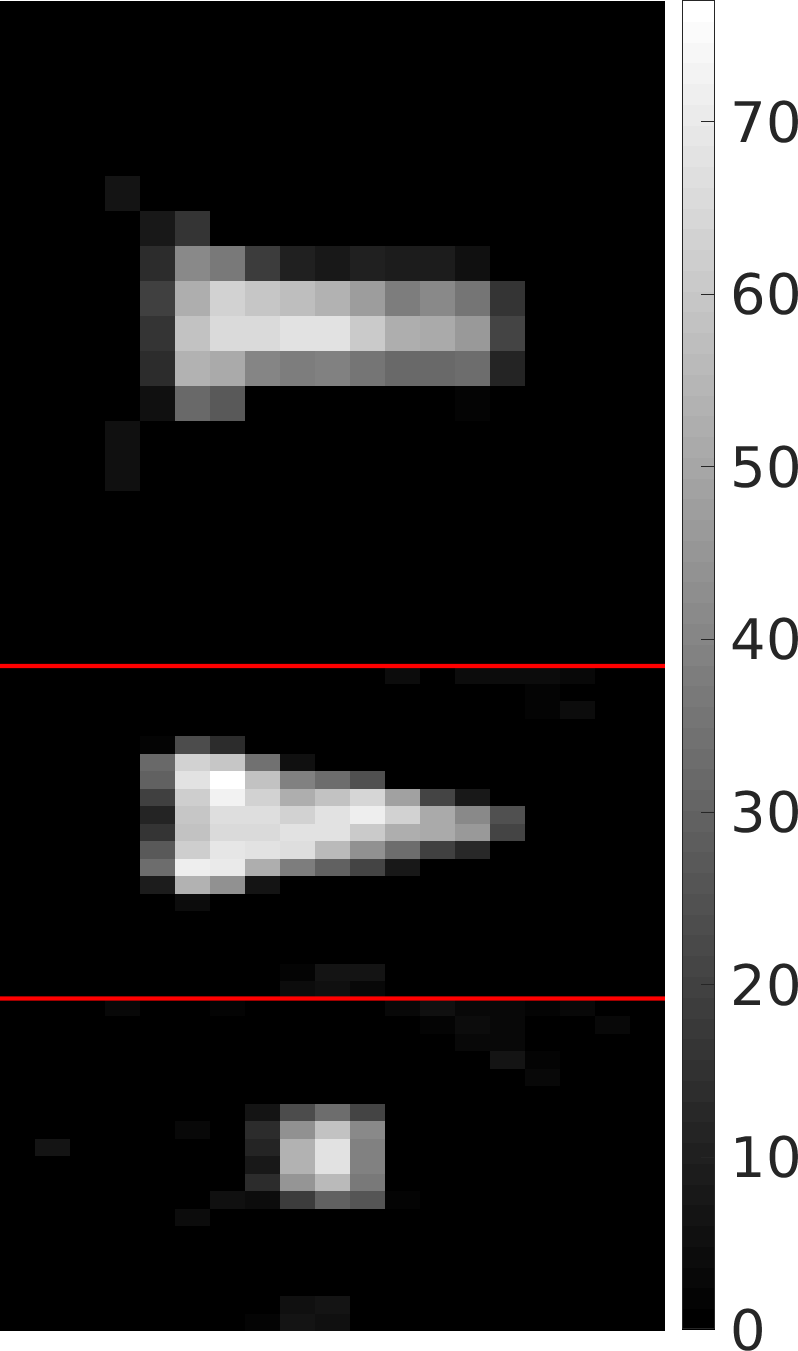}&
 \includegraphics[width=0.14\textwidth]{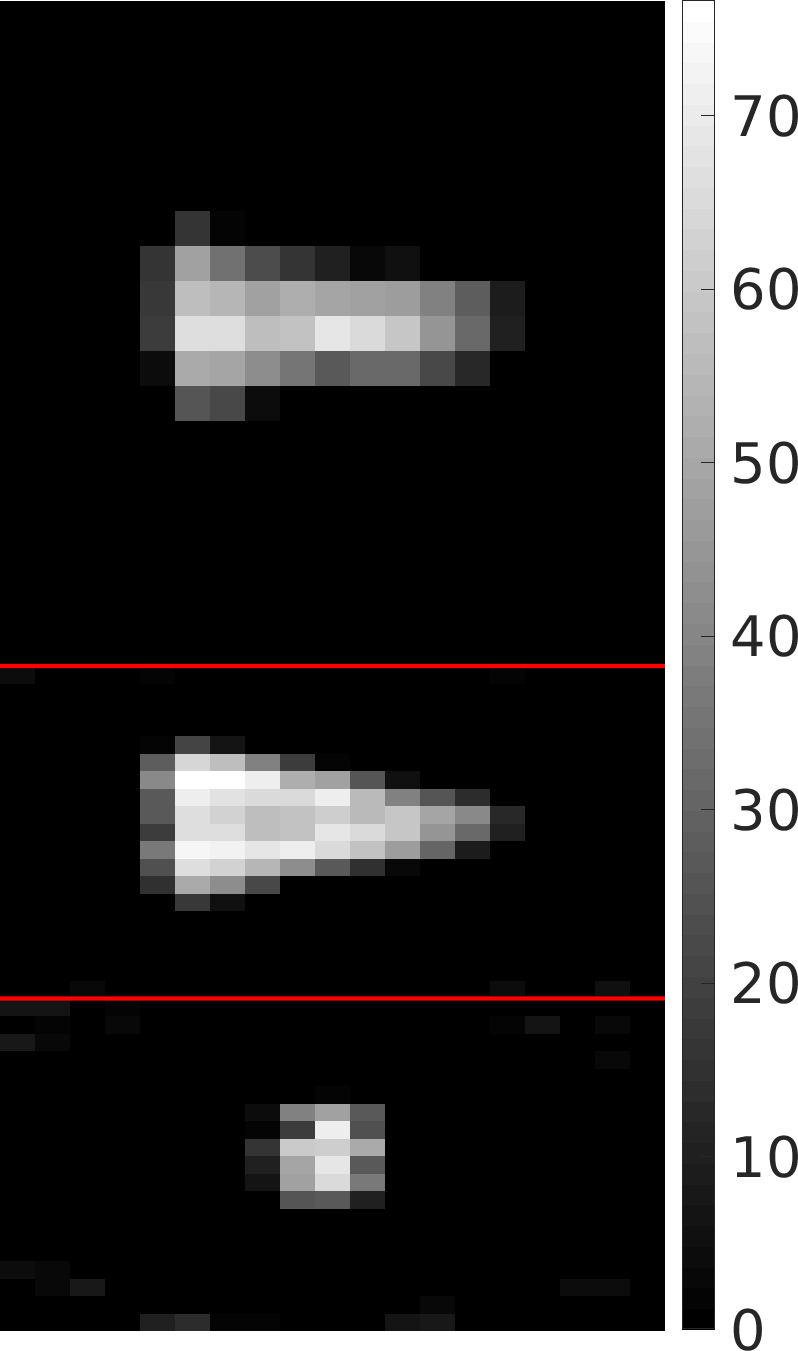}&
 \includegraphics[width=0.14\textwidth]{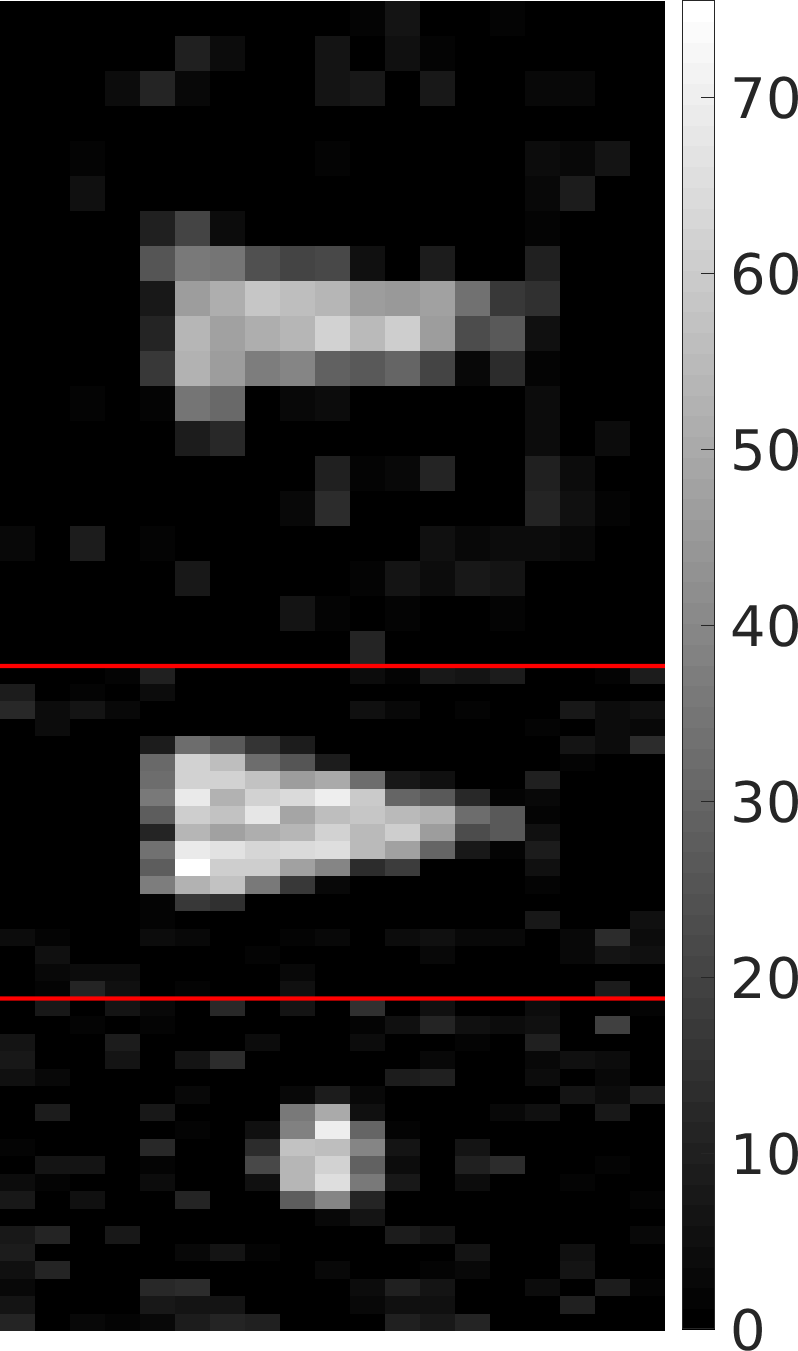}&
 \includegraphics[width=0.14\textwidth]{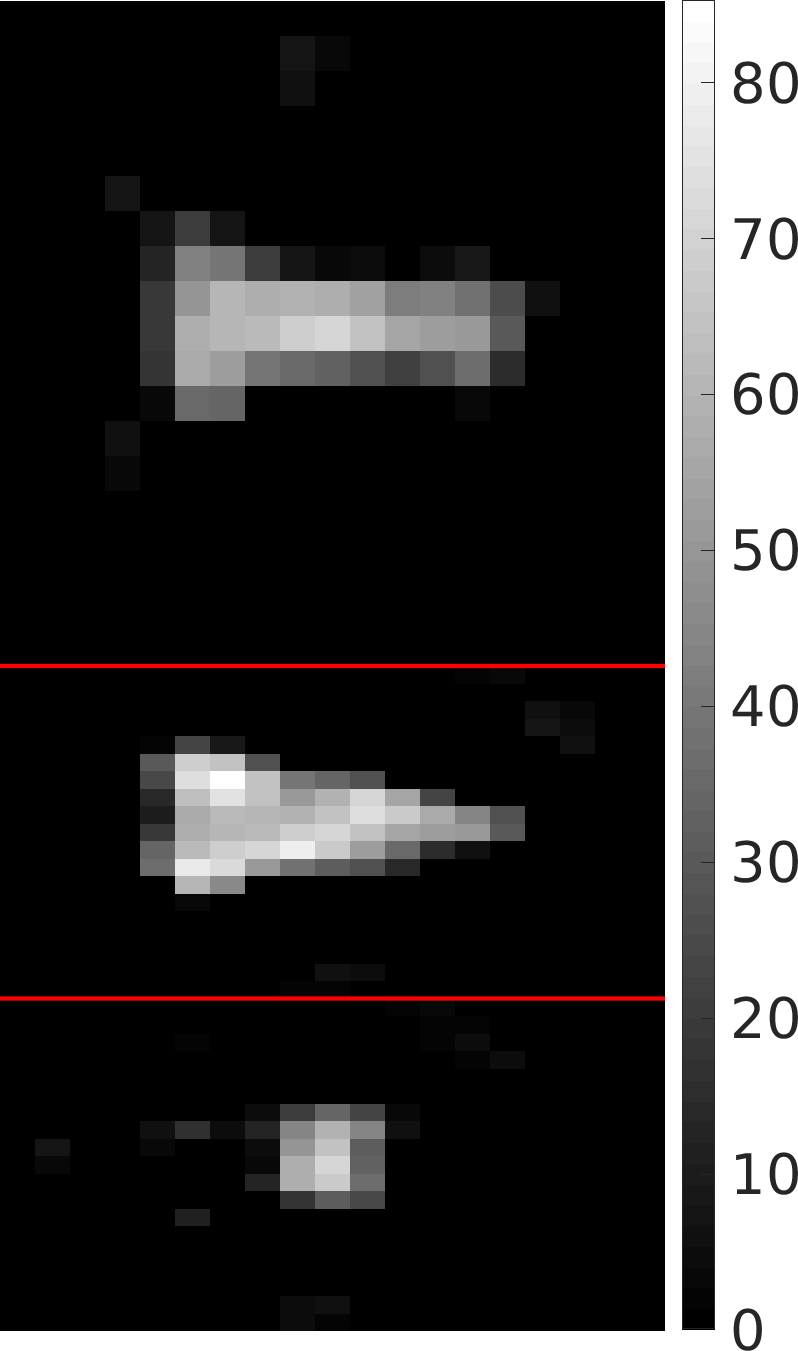}&
 \includegraphics[width=0.14\textwidth]{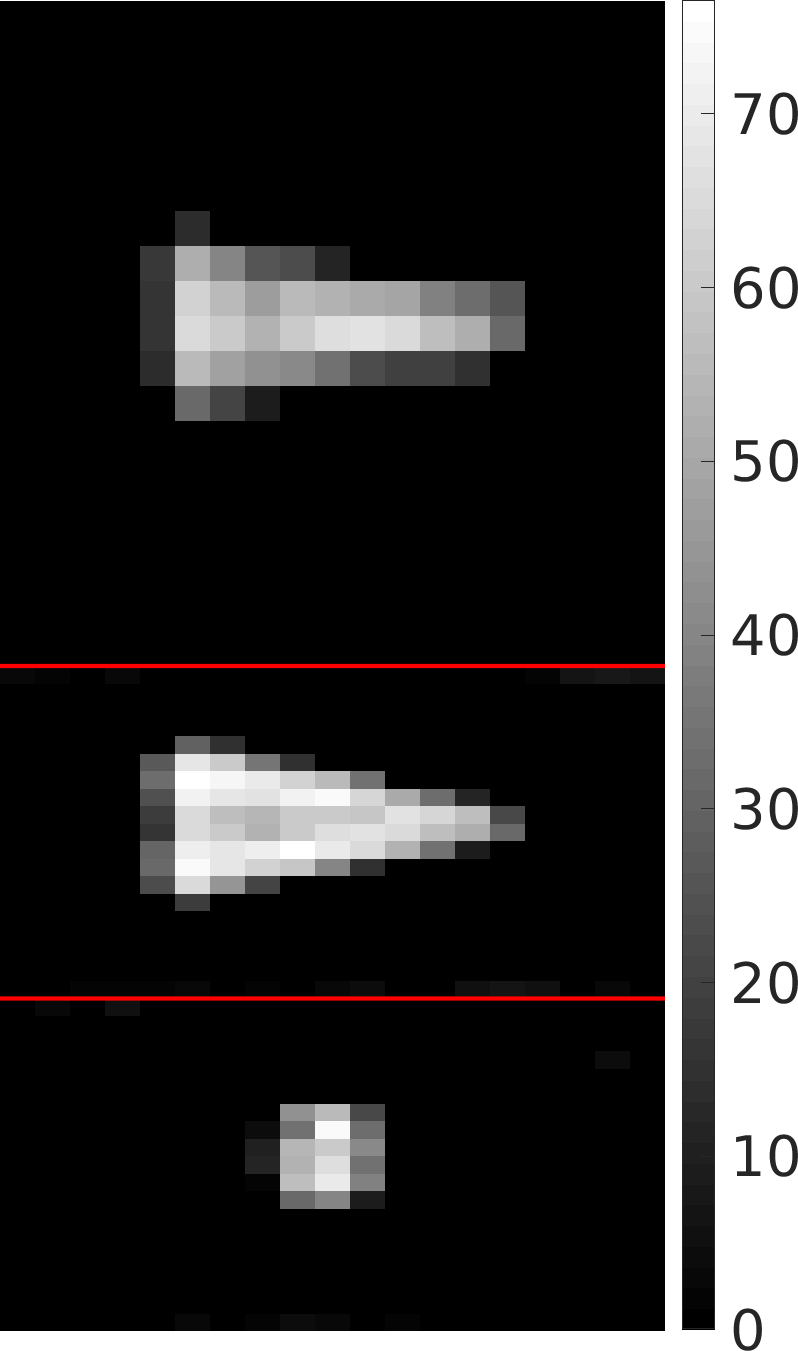}&
 \includegraphics[width=0.14\textwidth]{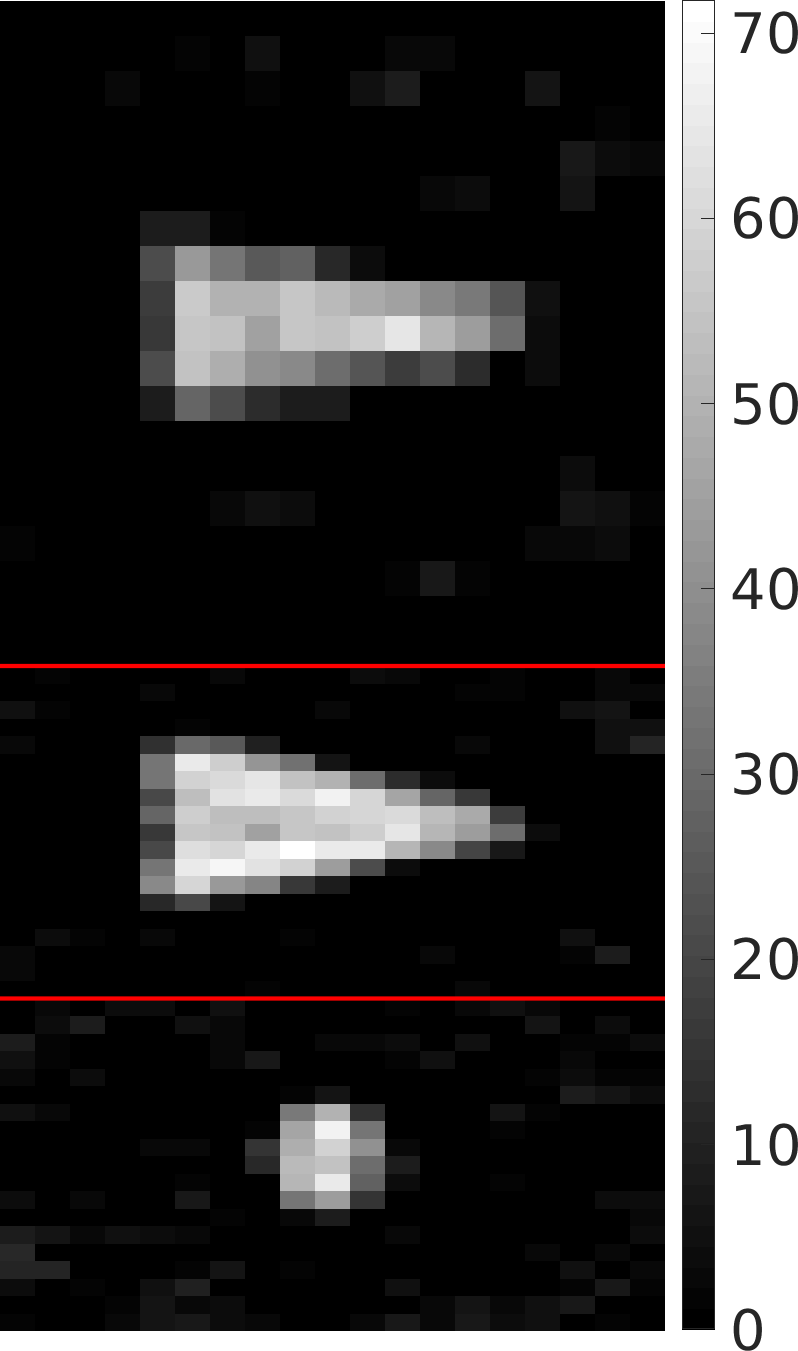}\\
\hline
\multicolumn{3}{l|}{$k=500$} & \multicolumn{3}{l}{}\\
 \includegraphics[width=0.14\textwidth]{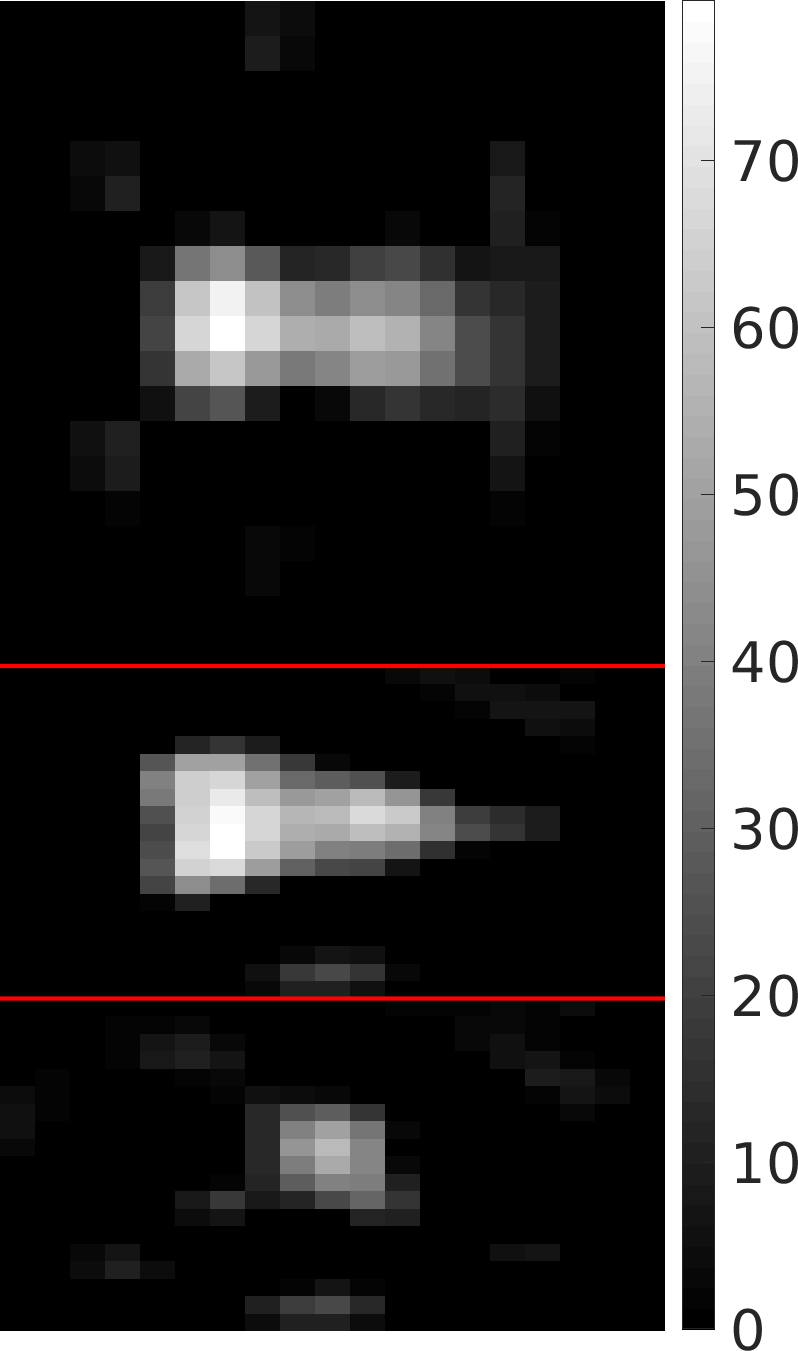}&
 \includegraphics[width=0.14\textwidth]{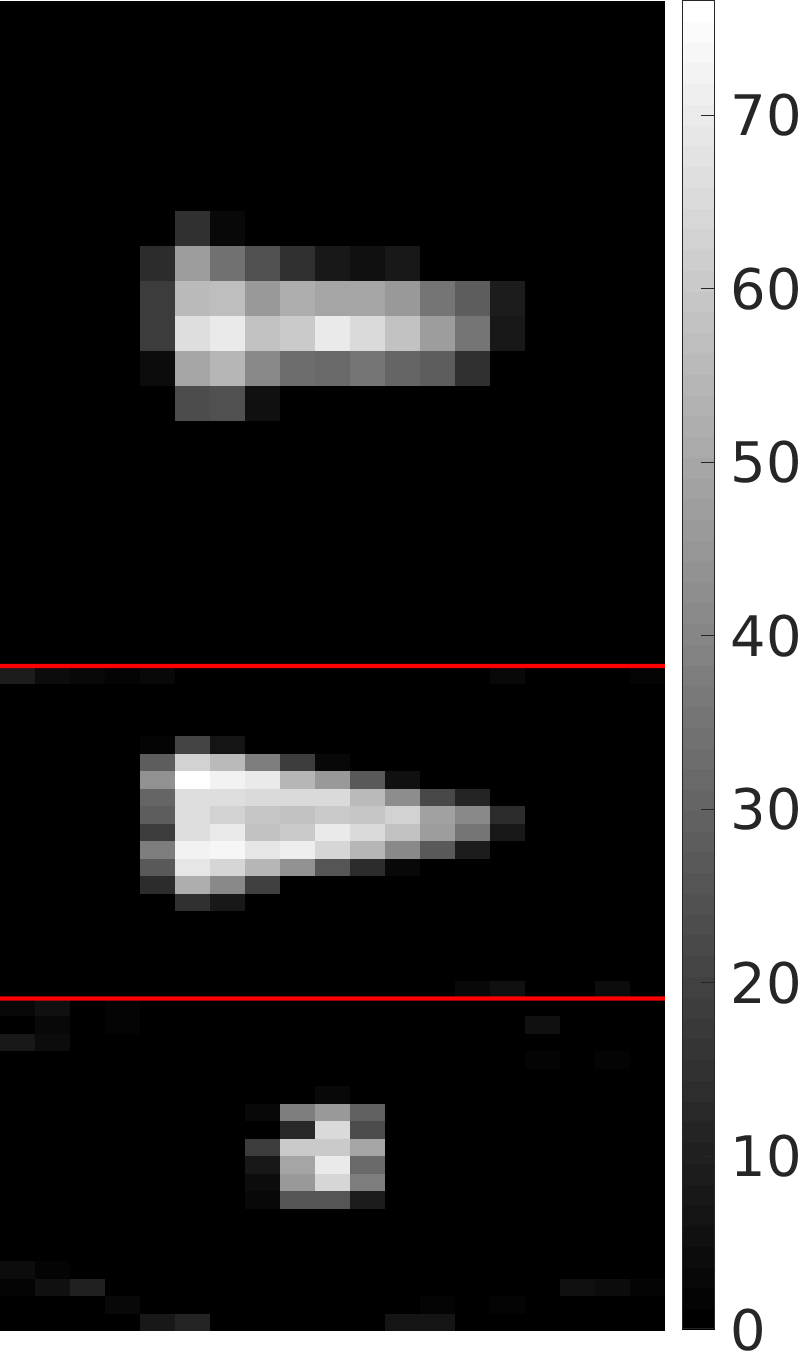}&
 \includegraphics[width=0.14\textwidth]{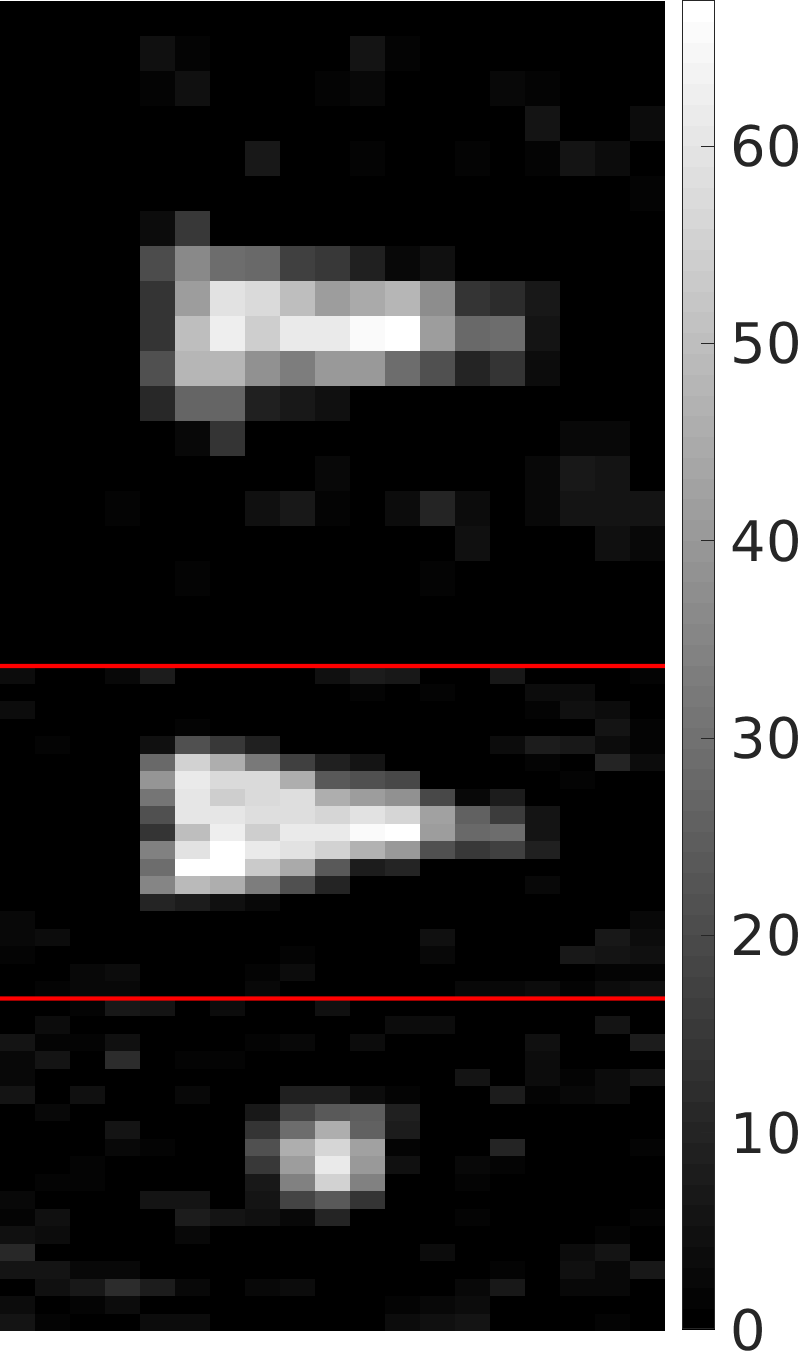}&
 \includegraphics[width=0.14\textwidth]{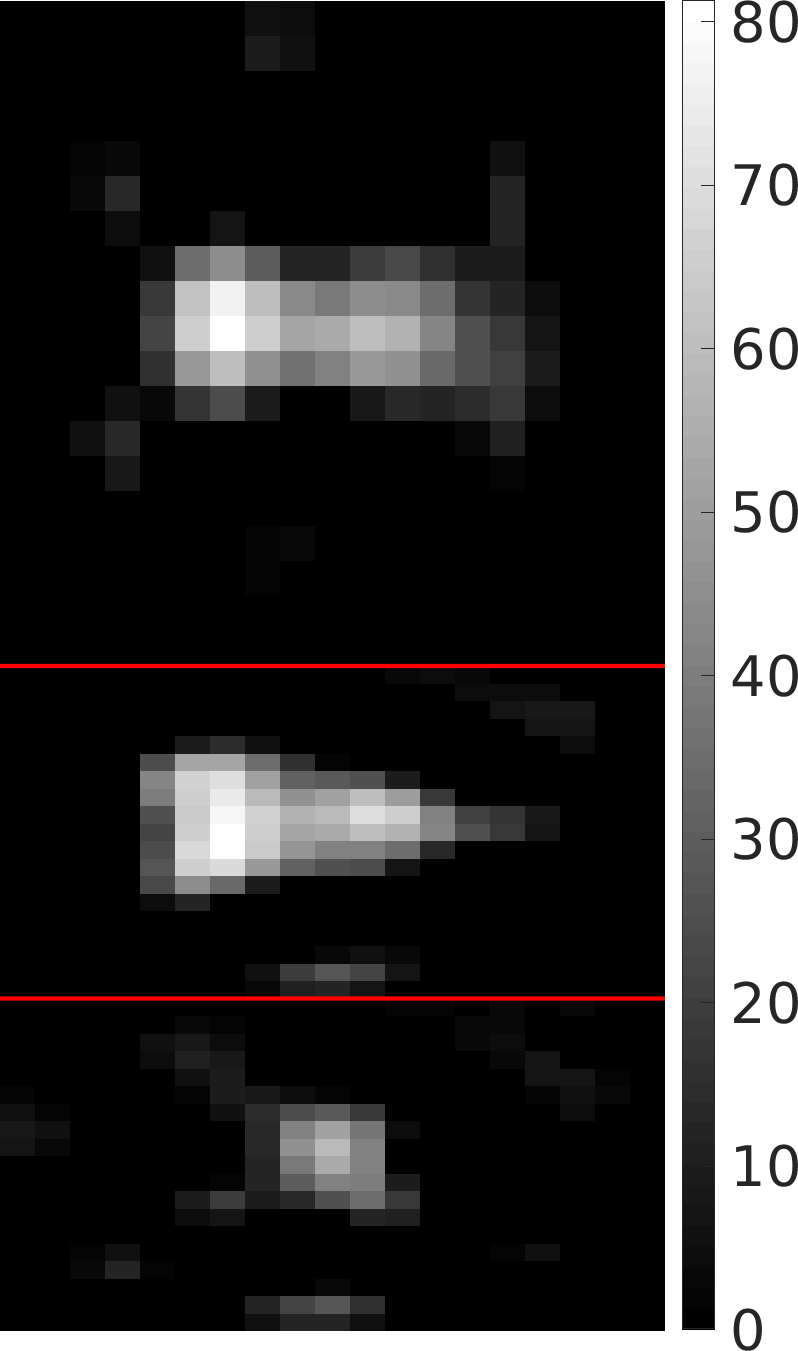}&
 \includegraphics[width=0.14\textwidth]{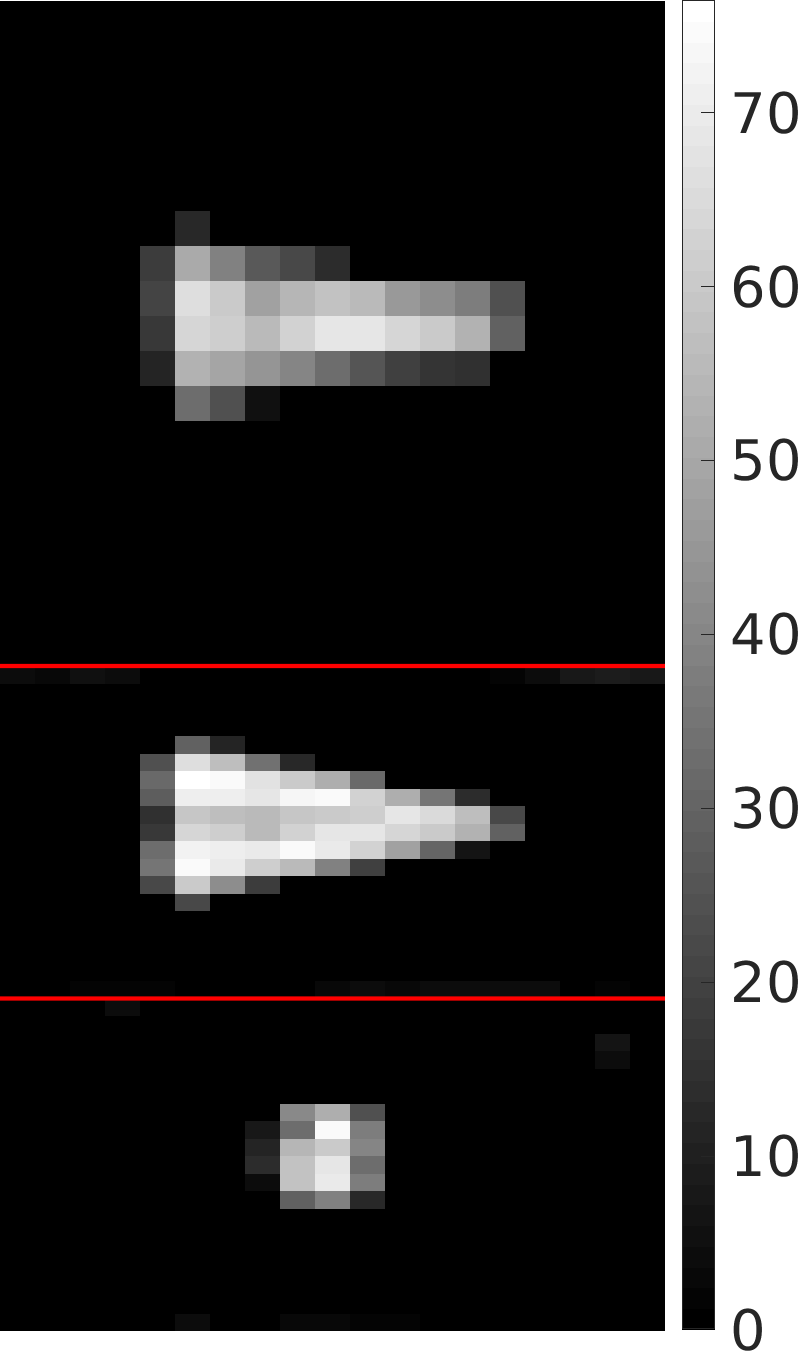}&
 \includegraphics[width=0.14\textwidth]{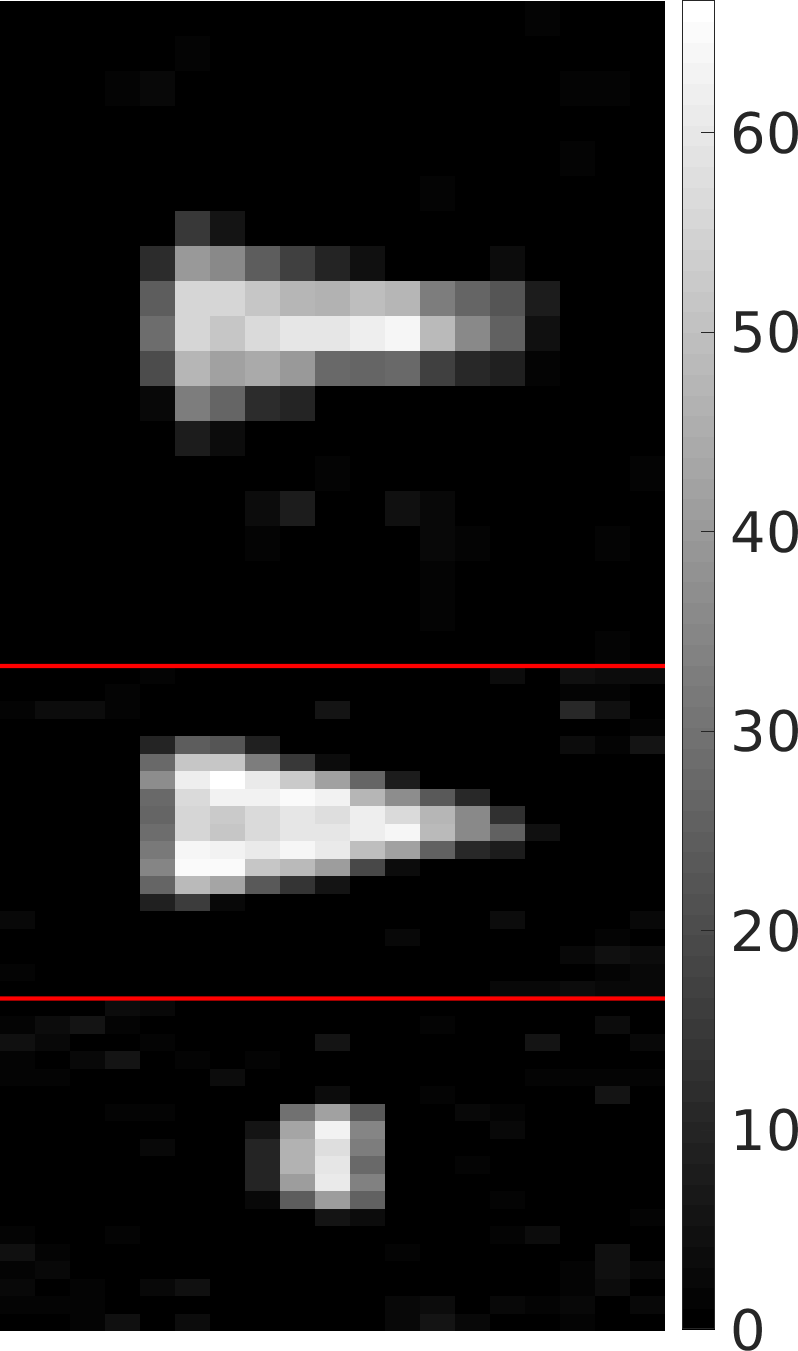}\\
\end{tabular}
\caption{``Shape'' phantom reconstructions (SNR, rSVD1, rSVD2) for $\alpha=3.05 \PLH 10^{-3}$. Concentration in mmol/l.} 
\label{fig:methods_alpha2}
\end{figure}

\begin{figure}
\begin{tabular}{ccc|ccc}
\multicolumn{3}{c|}{Non-whitened} & \multicolumn{3}{c}{Whitened}\\
\hline
SNR & rSVD1 & rSVD2 & SNR & rSVD1 & rSVD2 \\
\hline
\multicolumn{3}{l|}{$k=1000$} & \multicolumn{3}{l}{}\\
 \includegraphics[width=0.14\textwidth]{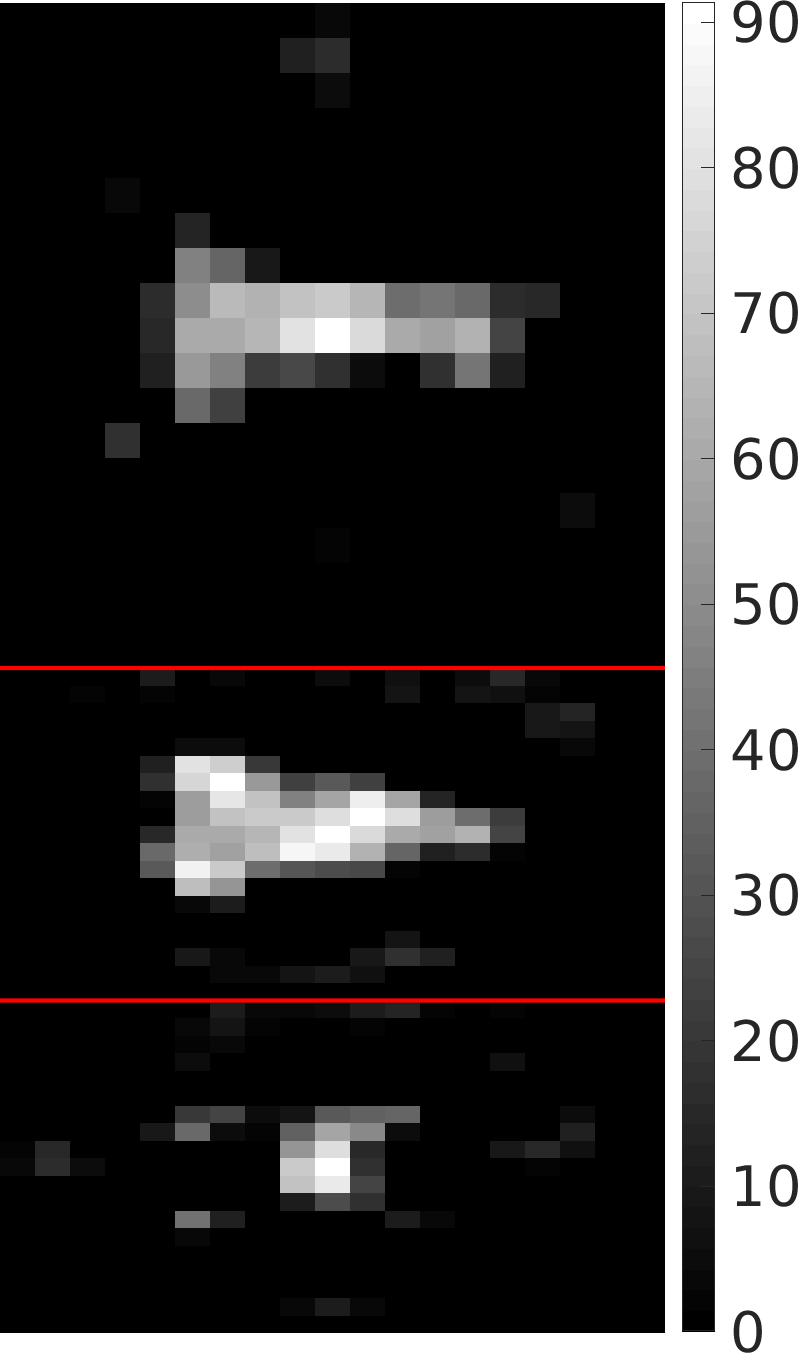}&
 \includegraphics[width=0.14\textwidth]{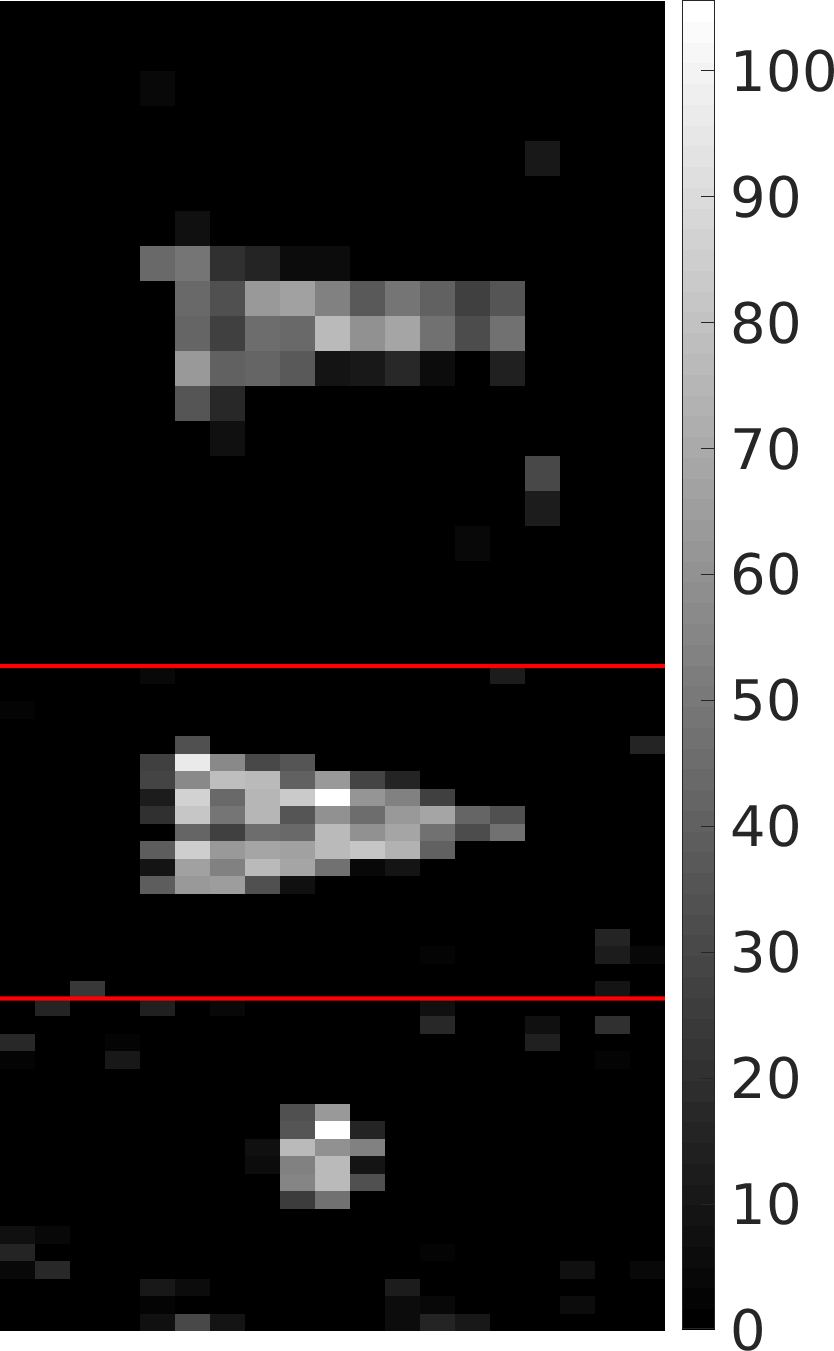}&
 \includegraphics[width=0.14\textwidth]{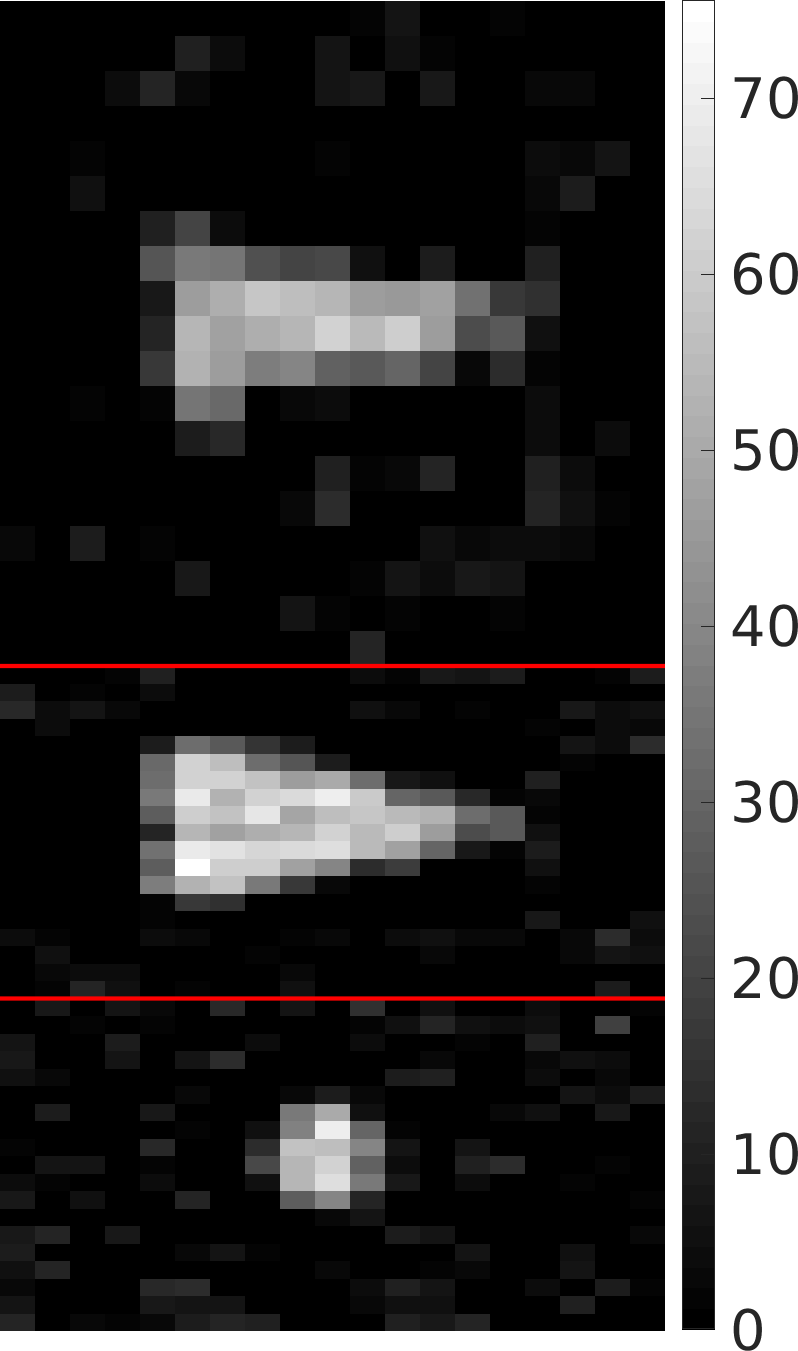}&
 \includegraphics[width=0.14\textwidth]{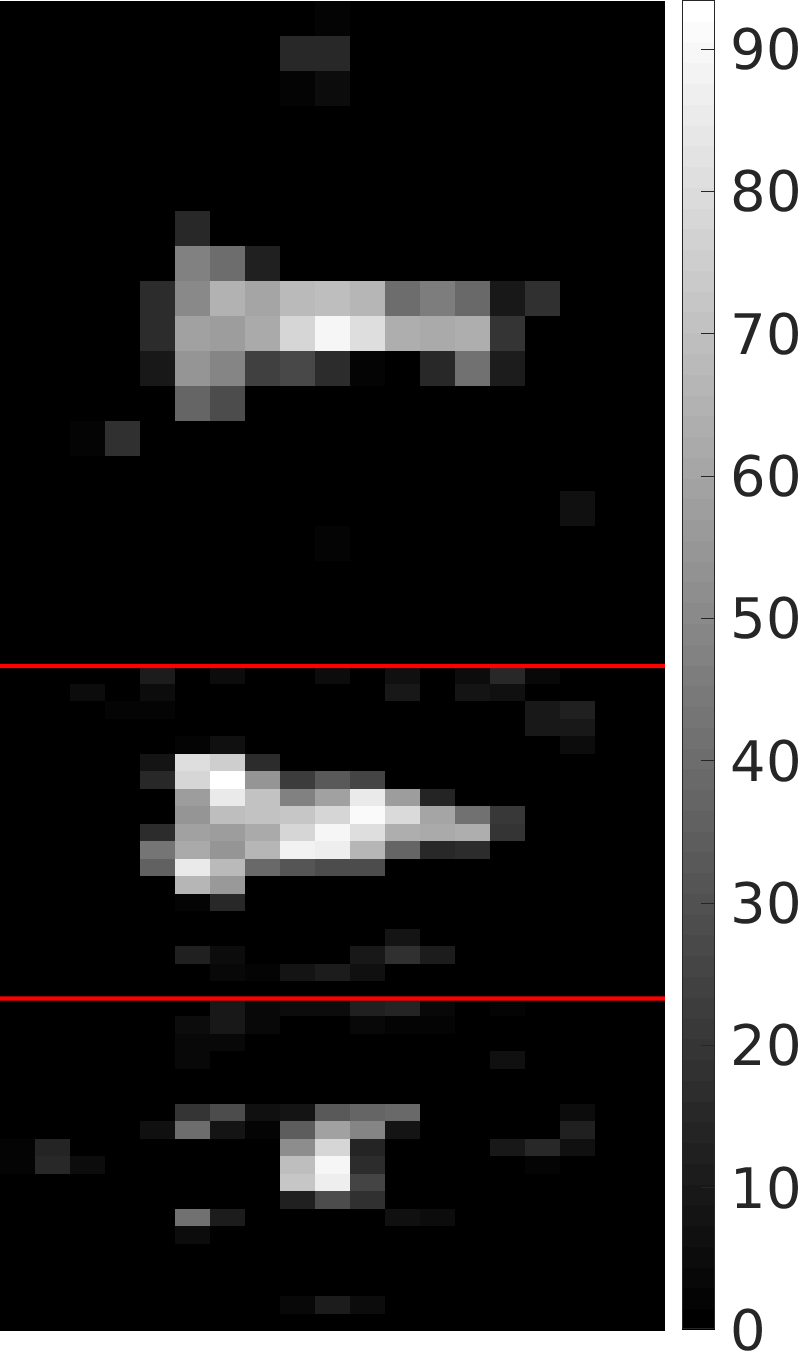}&
 \includegraphics[width=0.14\textwidth]{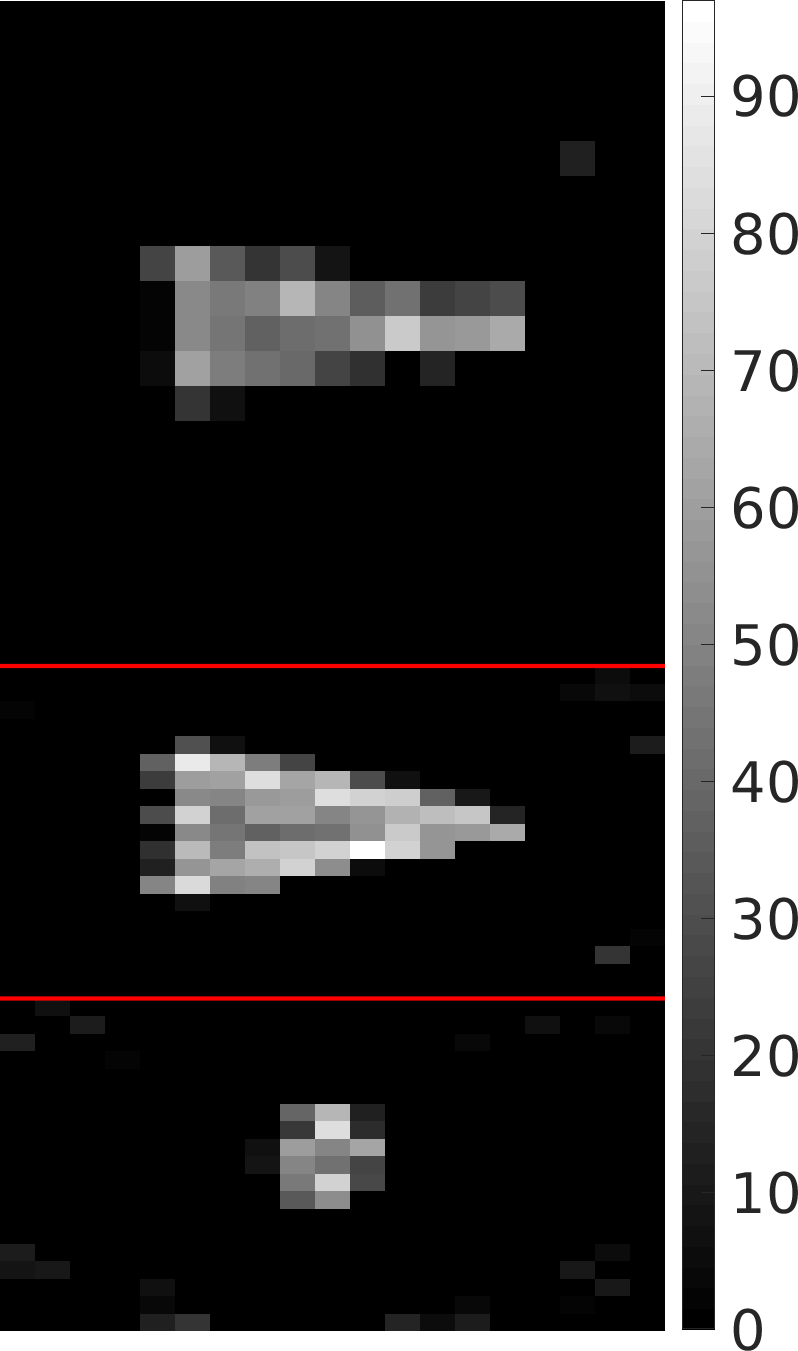}&
 \includegraphics[width=0.14\textwidth]{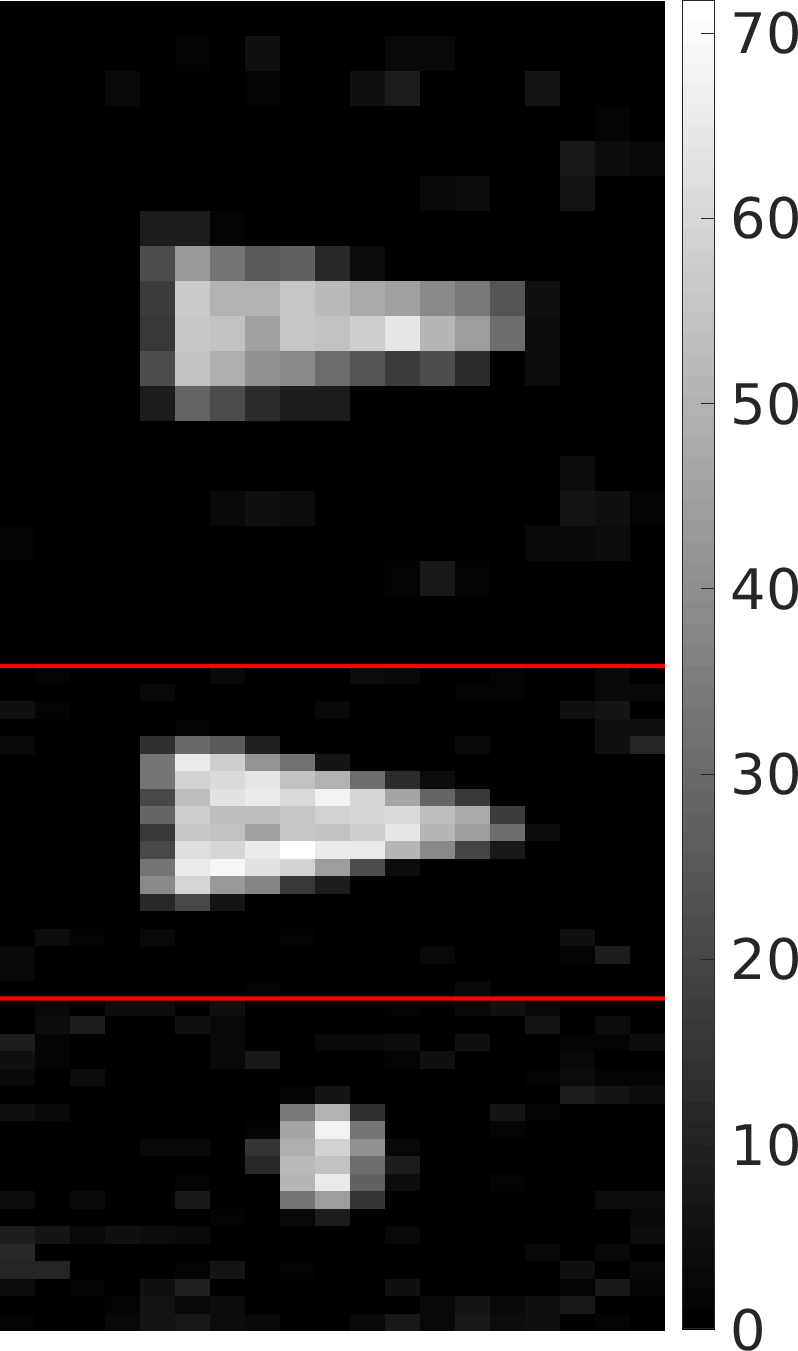}\\
\hline
\multicolumn{3}{l|}{$k=500$} & \multicolumn{3}{l}{}\\
 \includegraphics[width=0.14\textwidth]{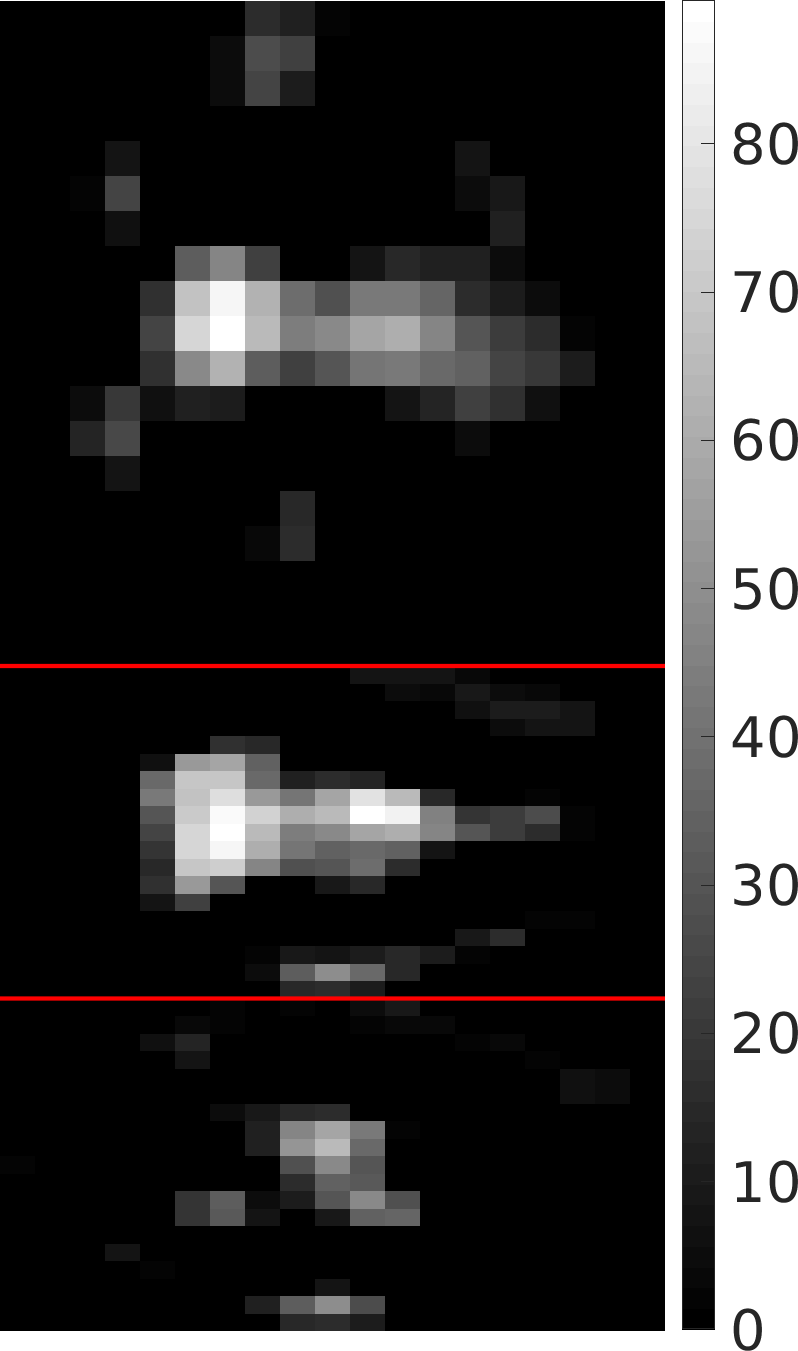}&
 \includegraphics[width=0.14\textwidth]{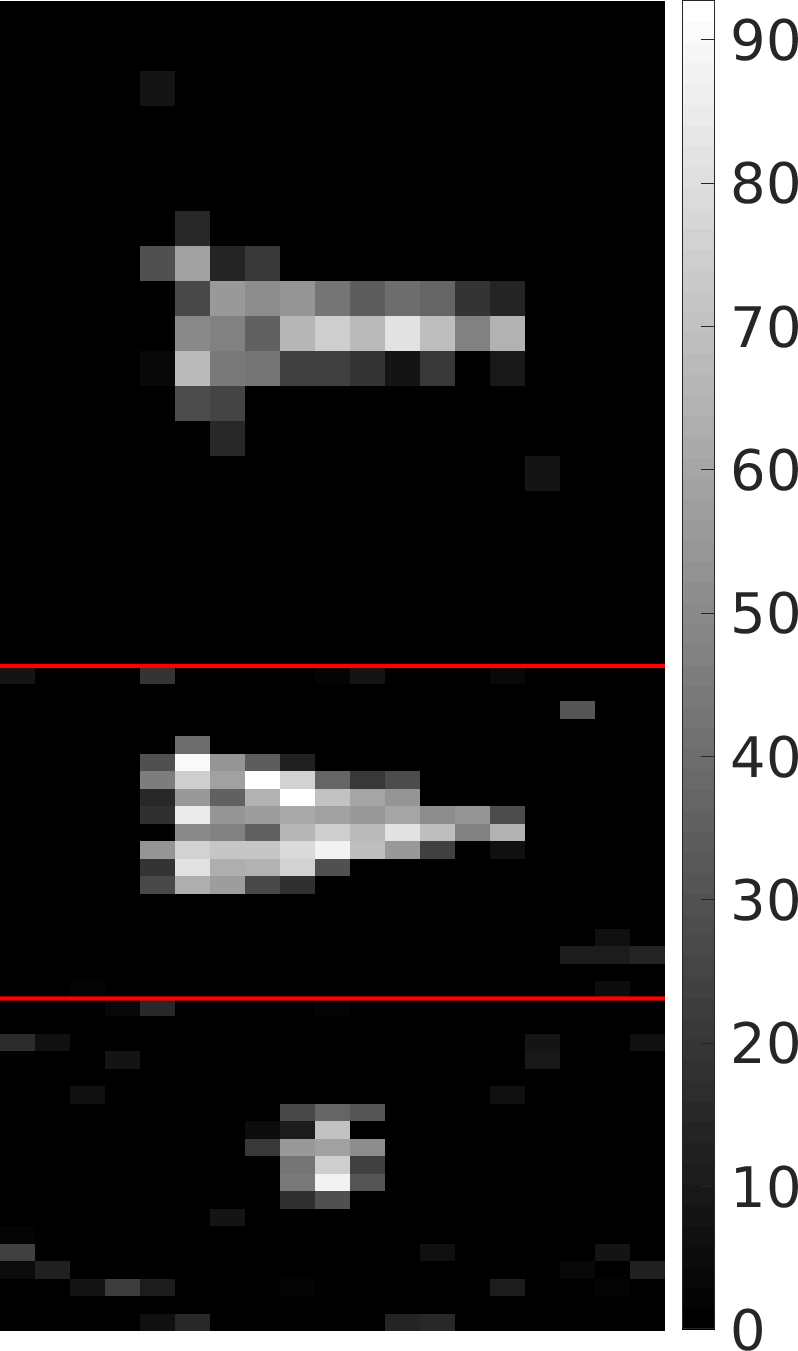}&
 \includegraphics[width=0.14\textwidth]{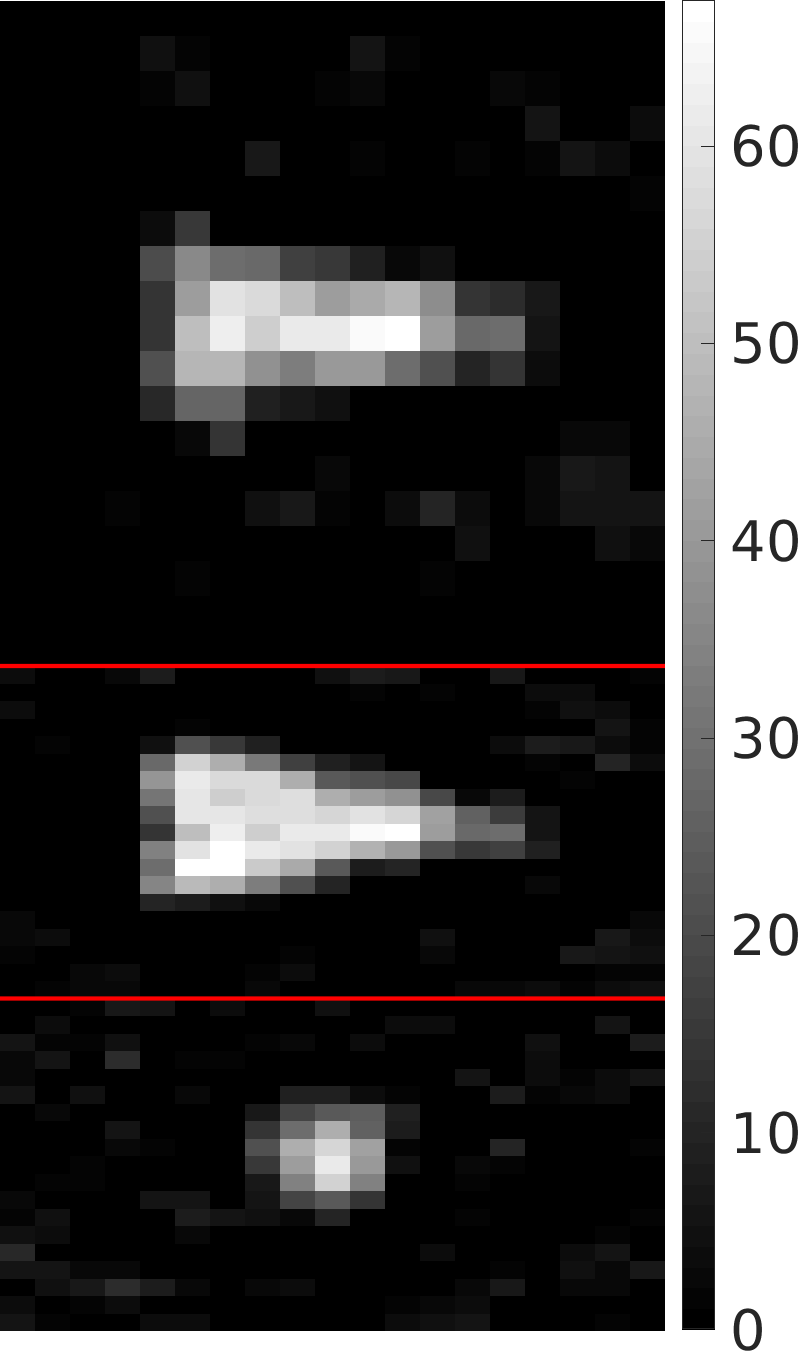}&
 \includegraphics[width=0.14\textwidth]{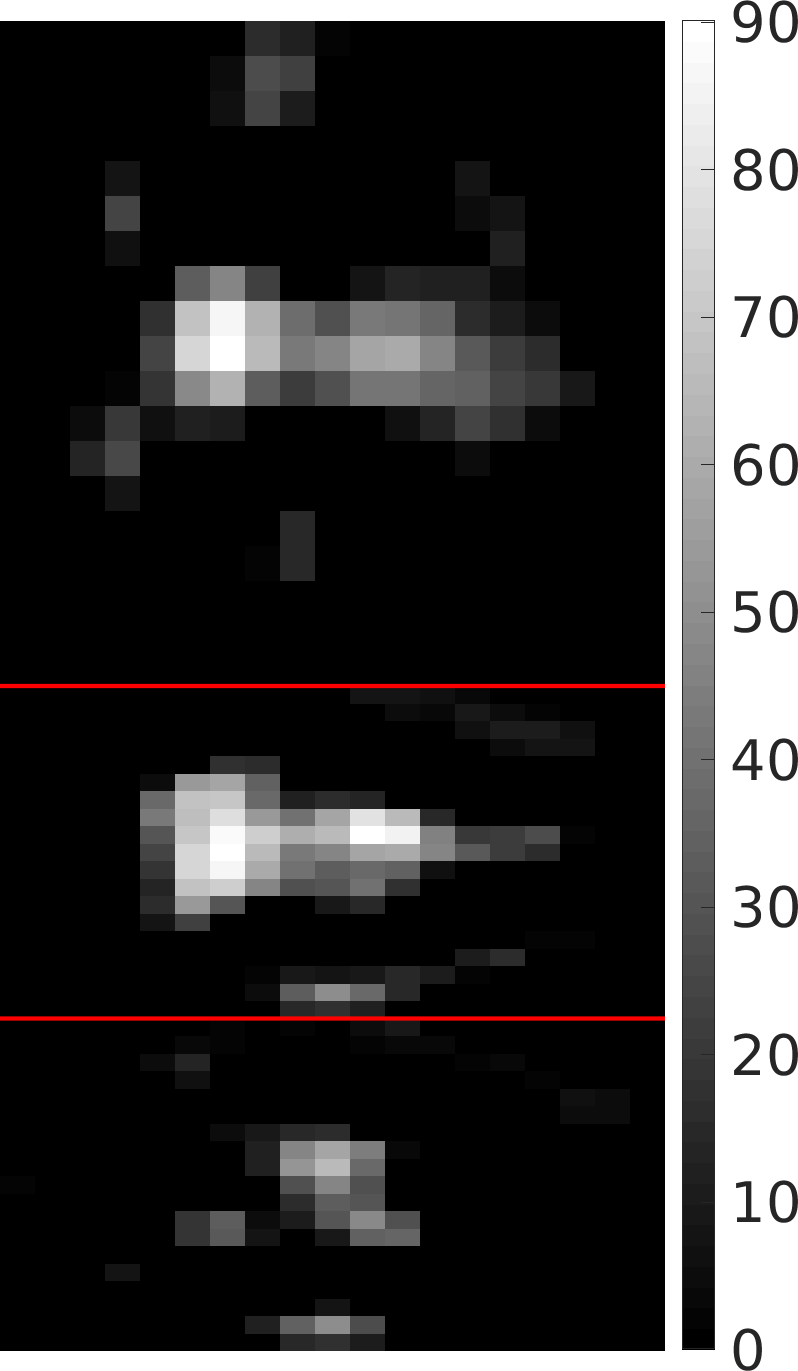}&
 \includegraphics[width=0.14\textwidth]{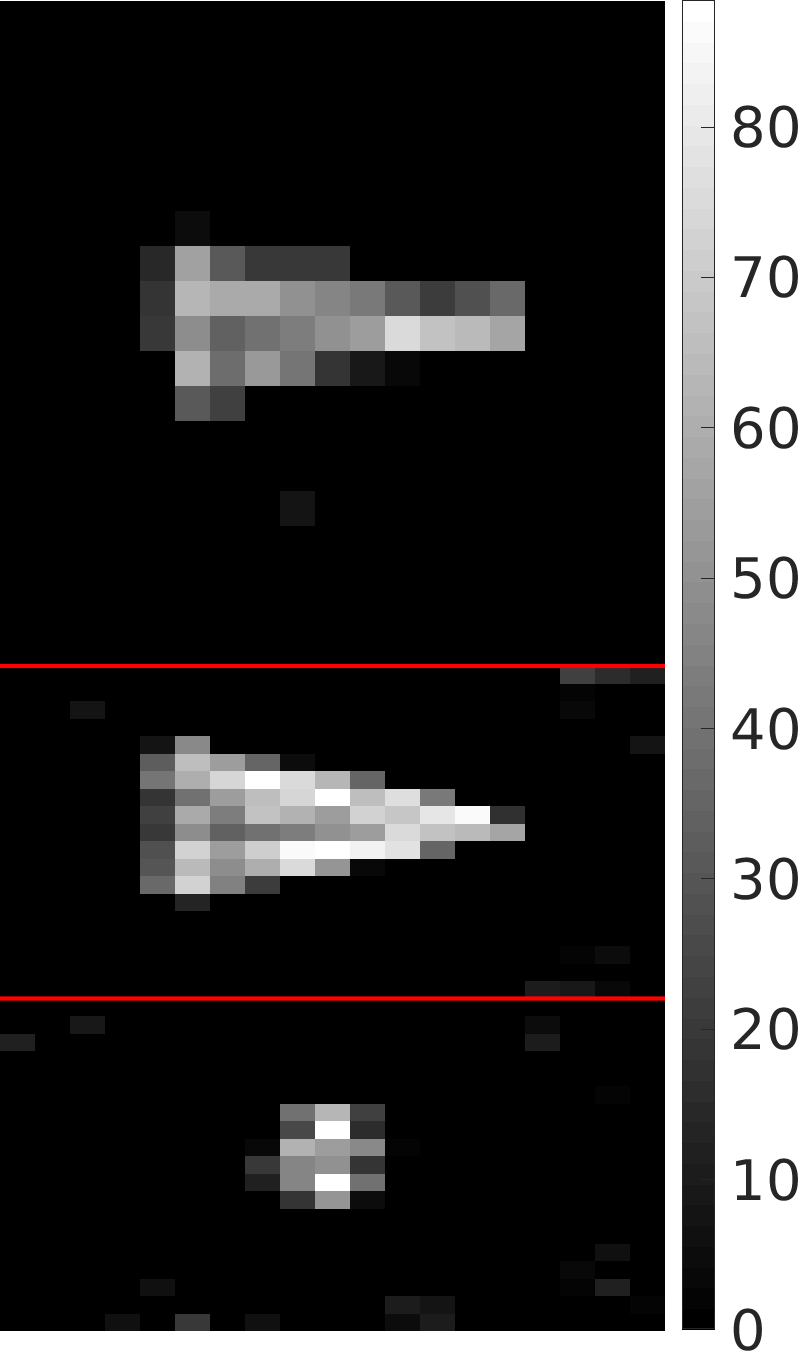}&
 \includegraphics[width=0.14\textwidth]{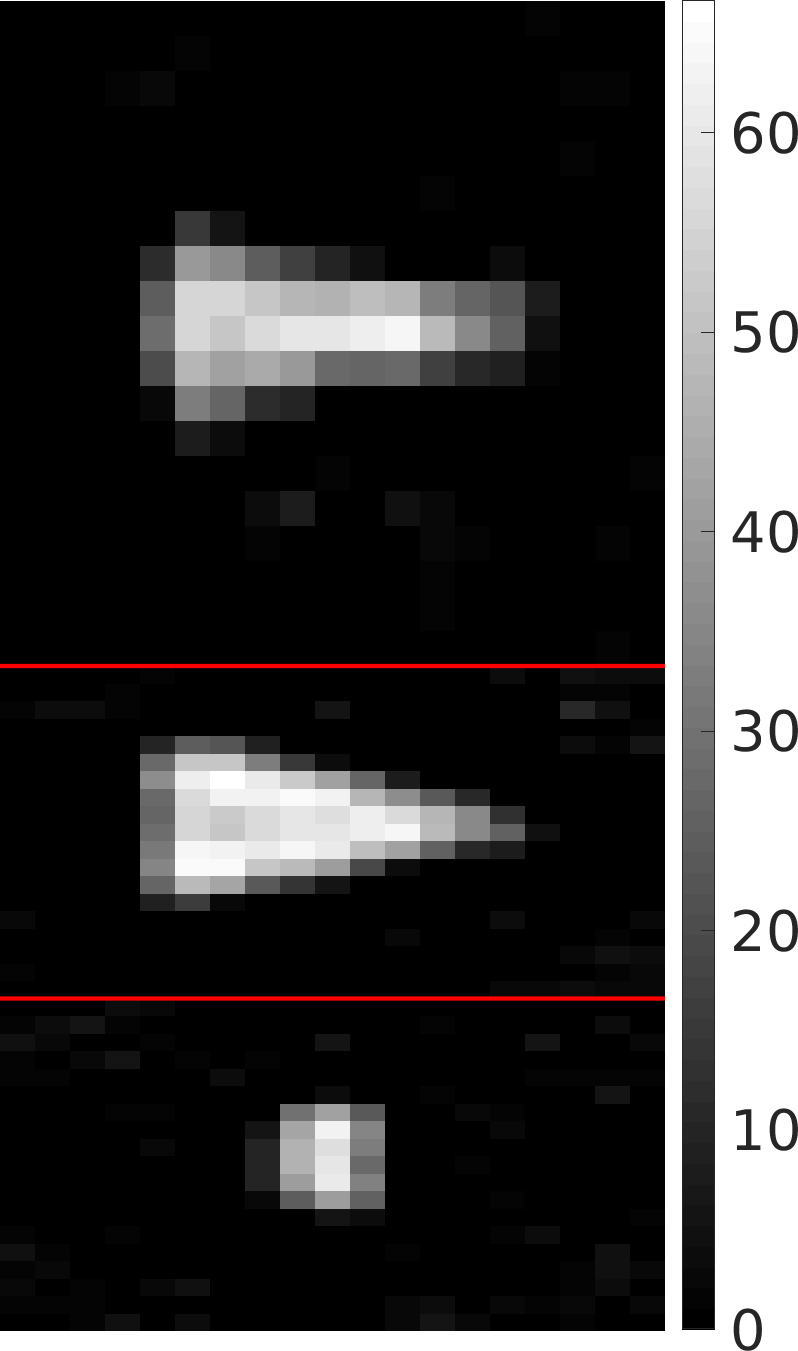}
\end{tabular}
\caption{``Shape'' phantom reconstructions (SNR, rSVD1, rSVD2) for $\alpha=9.54 \PLH 10^{-5}$.
Concentration in mmol/l.}
\label{fig:methods_alpha3}
\end{figure}

Next we focus on the role of whitening in rSVD acceleration. Whitening influences greatly both rSVD1 and 
rSVD2, especially when the $\alpha$ value is small: The whitened reconstructions are of better quality
since the background artifacts are strongly reduced; see Fig. \ref{fig:methods_alpha3}. Note that for small
$\alpha$, a small truncation number $k$ can be very beneficial for improving reconstruction quality, due to
its intrinsic regularizing effect (in a manner similar to the classical truncated SVD \cite{EnglHankeNeubauer:1996}).
Further, comparing $k=500$ for rSVD1 in Figs. \ref{fig:methods_alpha2} and \ref{fig:methods_alpha3}, e.g., in the
$x$-$y$-plane shows that whitening may enables further dimension reduction while maintaining reconstruction
quality (due to the change in the SV decay curve), concurring with the observation from Fig. \ref{tab:sv_energy}. 
Among the three methods under analysis, rSVD2 benefits most
from whitening, since for all three $\alpha$ values, the background artifacts
disappear almost completely. While the precise mechanism remains unclear, it may be attributed to the more
robust SVD without too noisy singular functions corresponding to large singular values. These observations
indicate that whitening is advantageous in the reconstruction: it enables using smaller $\alpha$ values in rSVD1
and rSVD2 to obtain acceptable reconstructions without unnecessary smoothing the actual phantom because
of using for a sufficiently large $\alpha$, cf. Fig. \ref{fig:methods_alpha1}. However, SNR benefits
little from whitening: it relies on the SNR-type quality measure for dimension reduction, already
exploiting the background noise characteristic to a certain degree. Thus, the dimensionality is already
dramatically reduced, and the remaining rows of the reduced system have a large SNR-type quality measure
and are only weakly influenced by the noise.

The computing times of the reconstruction methods are summarized in Table \ref{tab:cmp_times}, where we have 
ignored the cost of preprocessing (e.g., frequency selection or rSVD) since it can be carried out offline. STD is the 
most expensive one among all methods under consideration. The computing time for rSVD1 and SNR are more or less comparable, when using
same $k$ value, due to similar complexity (more precisely, rSVD1 has slightly longer computing times due to the
additional matrix-vector multiplication to project the measurement into the space spanned by the $k$ singular functions
in $\tilde{U}_k$ respectively $\tilde{U}_{W;k}$). rSVD2 is the fastest method due to its non-iterative nature, even if
one takes into account the 20 sweeps over the corresponding reduced systems for SNR and rSVD1. Thus, all the acceleration
approaches can significantly reduce the overall computational cost, with the speedup factor essentially determined by the size of
the reduced system. Note that the speedup is especially important, since in practice one has to choose a proper $\alpha$
value, which inevitably requires solving a fair number of optimization problems. In particular, it holds promise
as a nearly online algorithm.

\begin{table}[hbt!]
\centering
\begin{tabular}{l|cccc}
\hline
 $k$ & SNR & rSVD1 & rSVD2 \\
\hline
 $500$ &    0.3792$\pm$0.0227&    0.3880$\pm$0.0265&    0.0115$\pm$0.0011\\
$1000$ &    0.7572$\pm$0.0391&    0.7775$\pm$0.0403&    0.0210$\pm$0.0022\\
$1500$ &    1.1489$\pm$0.0618&    1.1743$\pm$0.0609&    0.0300$\pm$0.0023\\
$2000$ &    1.5465$\pm$0.0910&    1.5605$\pm$0.0712&    0.0395$\pm$0.0022\\
\hline
\end{tabular}
\caption{Computing times (in seconds) using \texttt{MATLAB} (on a server with 2$\times$Intel
\textsuperscript{\textregistered} Xeon\textsuperscript{\textregistered}  Broadwell-EP Series Processor E5-2687W
v4, 3.00 GHz, 12-Core, and 1.5 TB DDR4 PC2666 main memory), mean and standard deviation over 100 reconstructions.
The computing time for STD is 53.5308$\pm$2.6653. STD for $20n$ iterations, SNR and rSVD1 for $20k$ iterations.
$n=70446$ and $m=6859$ voxels.}\label{tab:cmp_times}
\end{table}

In summary, randomized SVD can significantly accelerate the reconstruction algorithms, within which the whitening
procedure is highly beneficial, while maintaining the overall accuracy.

\subsection{Performance of choice rules}

Last, we illustrate the performance of the two choice rules described in Section \ref{ssec:rule}, i.e., discrepancy
principle (DP) and quasi-optimality  (QO) criterion. The numerical results are given in Figs.
\ref{fig:STD_parameter_choice}-\ref{fig:discrepancy_principle} (with the initial value $\alpha_0=100$ and
decreasing factor $q=0.5$). Due to the challenging nature of choosing appropriate $\tau$, $\delta$, $\sigma$ and
$\epsilon$ (which may also differ for the compared methods), we used the minimum of the first 50 parameters given
by the geometric sequence $\alpha_i$ as the upper bound $\tau\delta + \sigma\epsilon$ for DP in \eqref{eqn:discrepancy}.

\begin{figure}[hbt!]
\centering
 \begin{tabular}{ccc|ccc}
\multicolumn{3}{c|}{Non-whitened} & \multicolumn{3}{c}{Whitened}\\
\hline
 & (discr.) & (quasi.) & & (discr.) & (quasi.)  \\
 & $\alpha_{i^\ast}=1.56 \PLH 10^{0}$  & $\alpha_{i^\ast}=7.81 \PLH 10^{-1}$& & $\alpha_{i^\ast}=1.56 \PLH 10^{0}$ & $\alpha_{i^\ast}=2.44 \PLH 10^{-2}$  \\
\hline
& \includegraphics[width=0.14\textwidth]{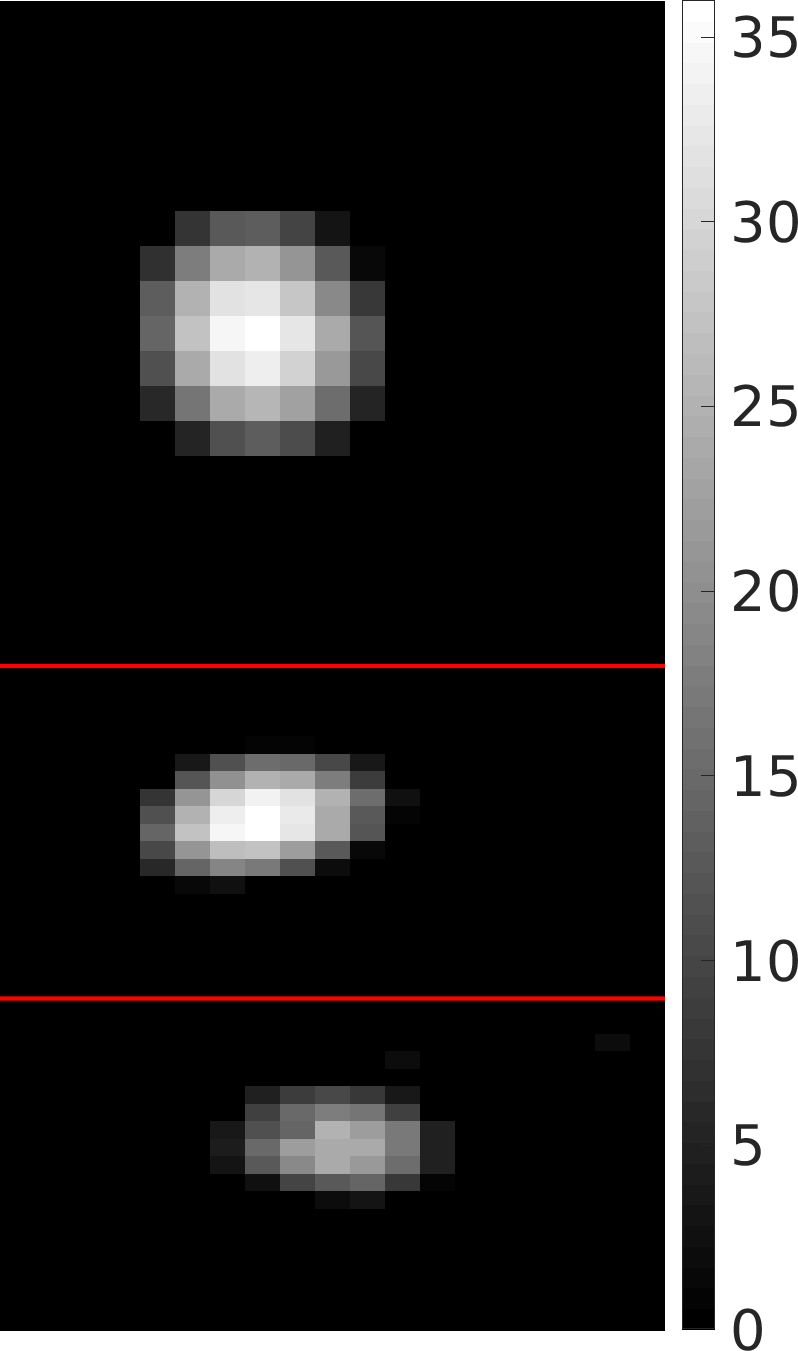}&
\includegraphics[width=0.14\textwidth]{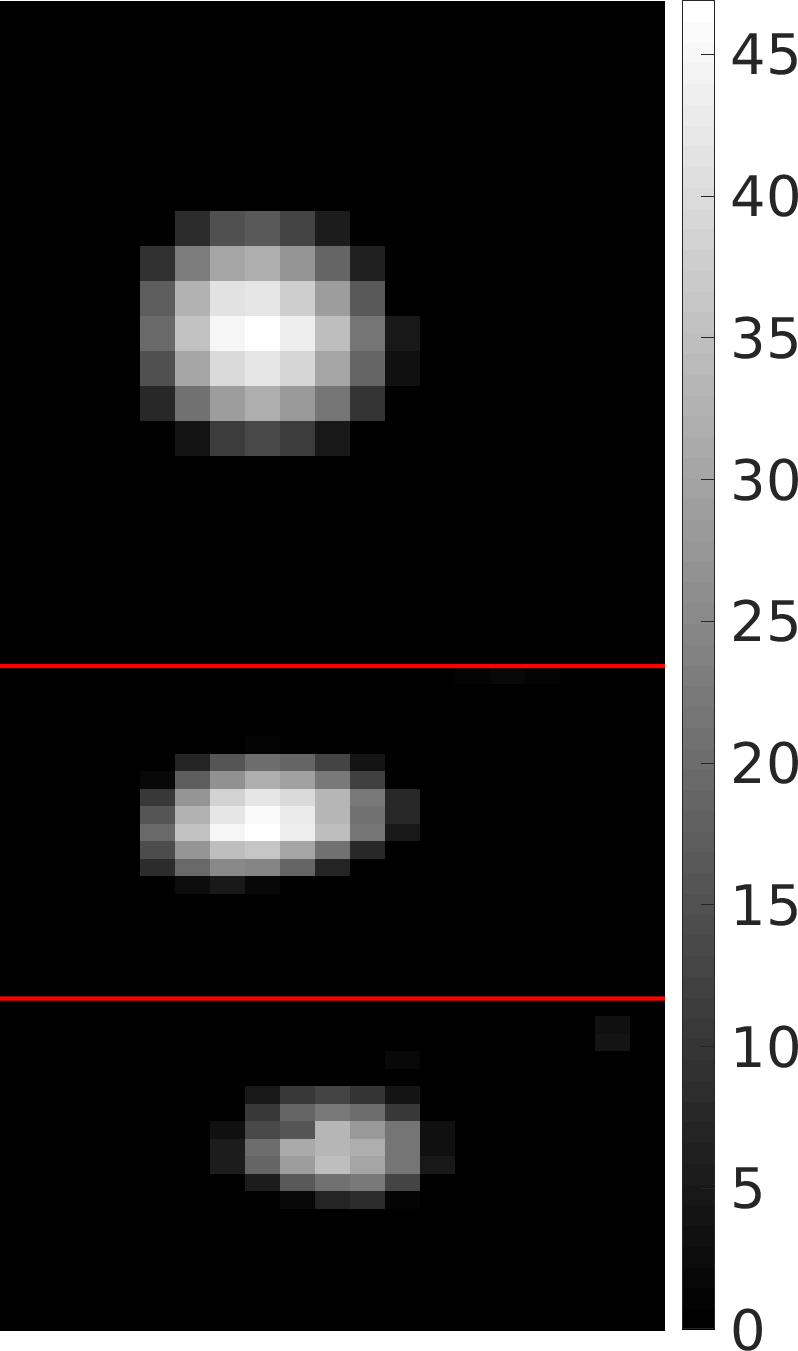}&
& \includegraphics[width=0.14\textwidth]{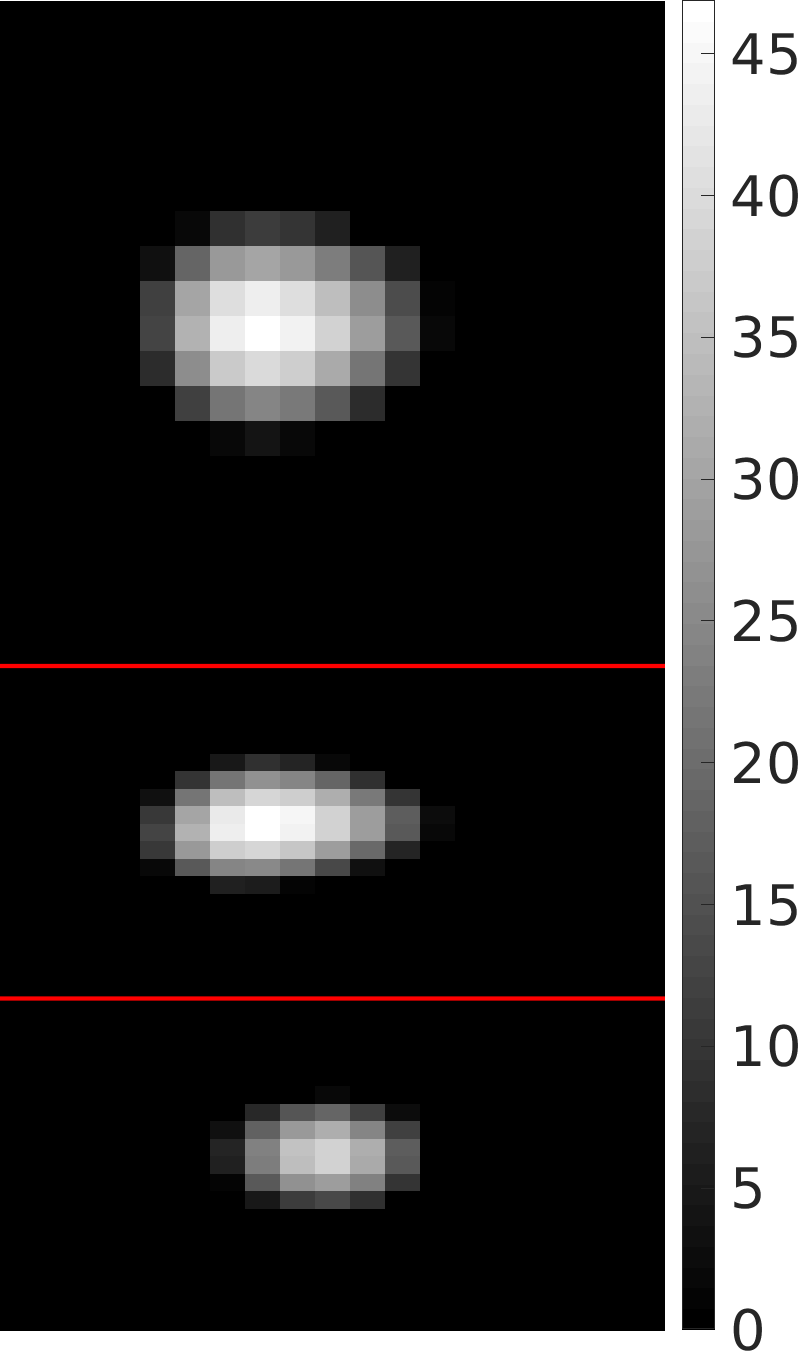}
&  \includegraphics[width=0.14\textwidth]{figures/plots_open_MPI_no_snr_thr/shapePhantom/use_cpp0/dim2000/method1_alpha11.png}
\\
\end{tabular}
\caption{$\alpha_{i^\ast}$ chosen as described in Figs. \ref{fig:quasi_optimality} and \ref{fig:discrepancy_principle} for the
STD reconstruction. Concentration in mmol/l.}
\label{fig:STD_parameter_choice}
\end{figure}

In the non-whitened case, for the given choice of parameters, DP can only give reasonable results for SNR (cf. Fig.
\ref{fig:discrepancy_principle}). This might be related to the SNR-type quality measure: it gives a reduced
system with less noise in each row such that DP can terminate at smaller $\alpha$ values. In all other
non-whitened methods, there might be rows with noise contributions having a larger magnitude, which causes
an early stopping of the choice rule and gives strongly regularized / smoothed reconstructions (also decreased
concentration values). Whitening is also not able to improve the performance of DP. These observations are
in line with the well known fact that DP tends to yield overly smoothing reconstructions, by choosing a too large
regularization parameter \cite{EnglHankeNeubauer:1996}. These empirical observations indicate that the
delicacy of applying DP to MPI imaging, and further research is needed to make it feasible, with one
crucial issue being to obtain a reliable estimate on the noise level and bounds on the modeling errors.

The QO criterion shows excellent performance in the non-whitened case except for  SNR ($k=500$) and
STD. In all methods the performance is further improved when combined with whitening.
Particularly for the proposed methods (rSVD1, rSVD2), it has a far superior performance than DP, and
for these two methods, the best performance is reached for the whitened case. When compared with DP,
STD and SNR  show also improved or comparable performance, except in the SNR non-whitened
case for $k=500$, which, as mentioned earlier, is insufficient to capture the essential information
content in the dataset (and thus does not allow accurate reconstruction due to intrinsic information loss).
For the QO criterion, whitening greatly improves the performance in all cases.

\begin{figure}
\begin{tabular}{ccc|ccc}
\multicolumn{3}{c|}{Non-whitened} & \multicolumn{3}{c}{Whitened}\\
\hline
SNR & rSVD1 & rSVD2 & SNR & rSVD1 & rSVD2 \\
\hline
\multicolumn{3}{l|}{$k=1000$} & \multicolumn{3}{l}{}\\
$\alpha_{i^\ast}=2.44 \PLH 10^{-2}$ & $\alpha_{i^\ast}=2.44 \PLH 10^{-2}$ &$\alpha_{i^\ast}=1.56 \PLH 10^{0}$ & $\alpha_{i^\ast}=4.88 \PLH 10^{-2}$& $\alpha_{i^\ast}=1.22 \PLH 10^{-2}$ & $\alpha_{i^\ast}=1.56 \PLH 10^{0}$ \\
 \includegraphics[width=0.13\textwidth]{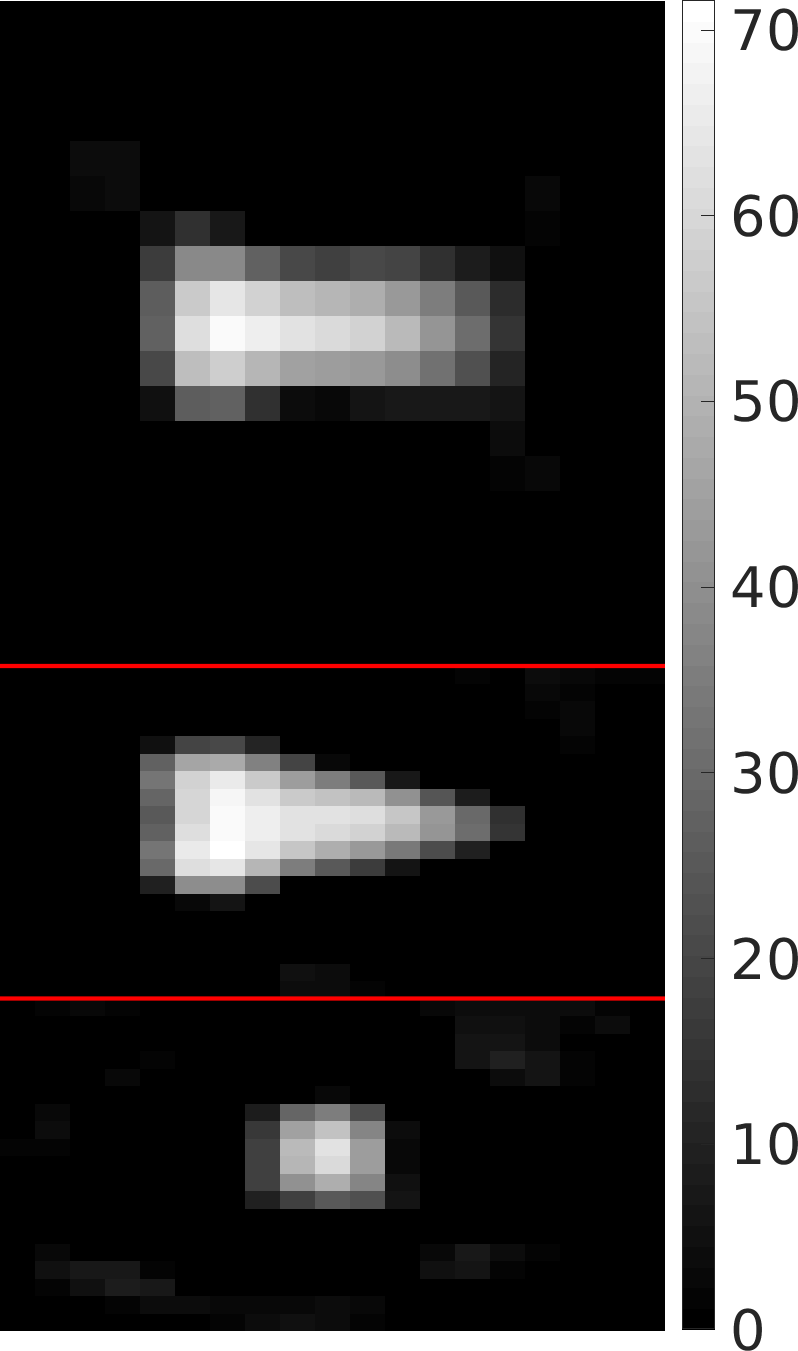}&
 \includegraphics[width=0.13\textwidth]{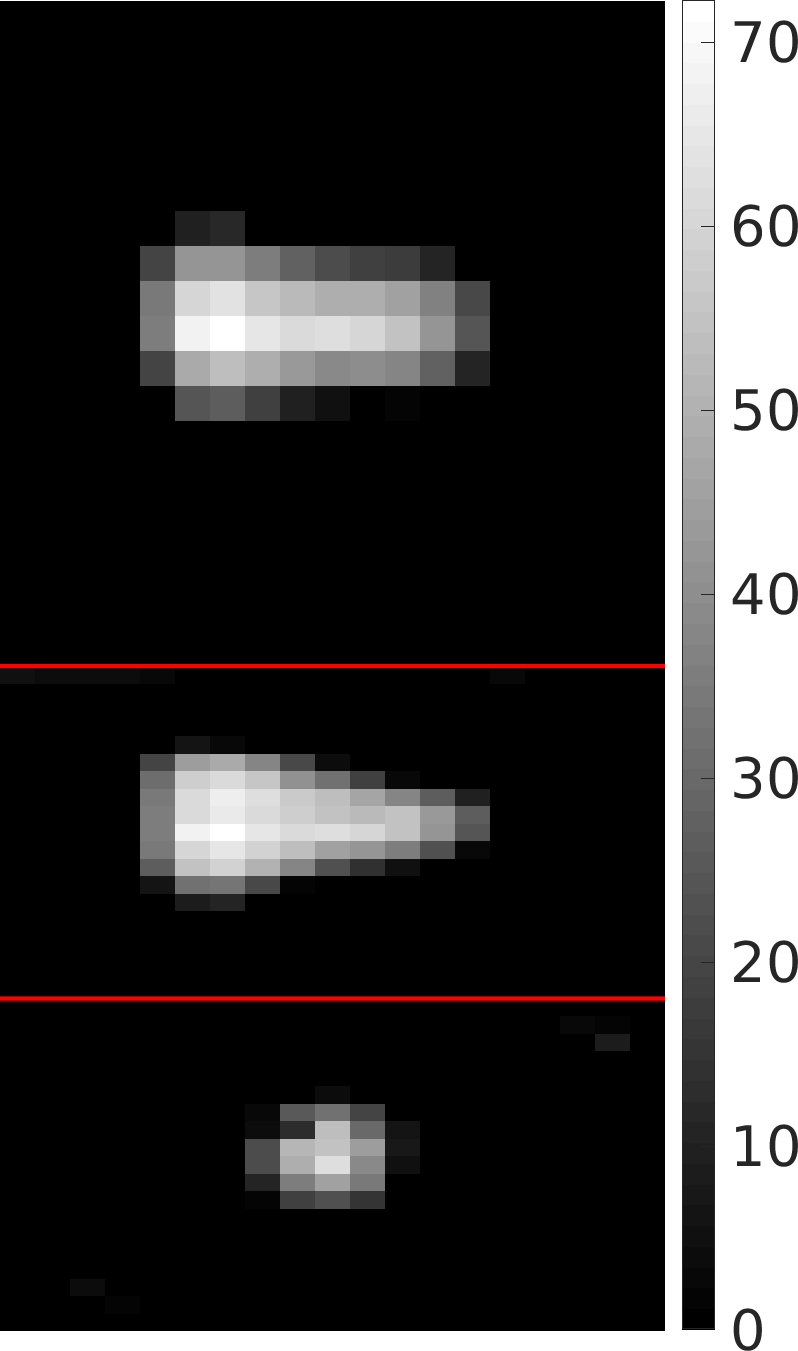}&
 \includegraphics[width=0.13\textwidth]{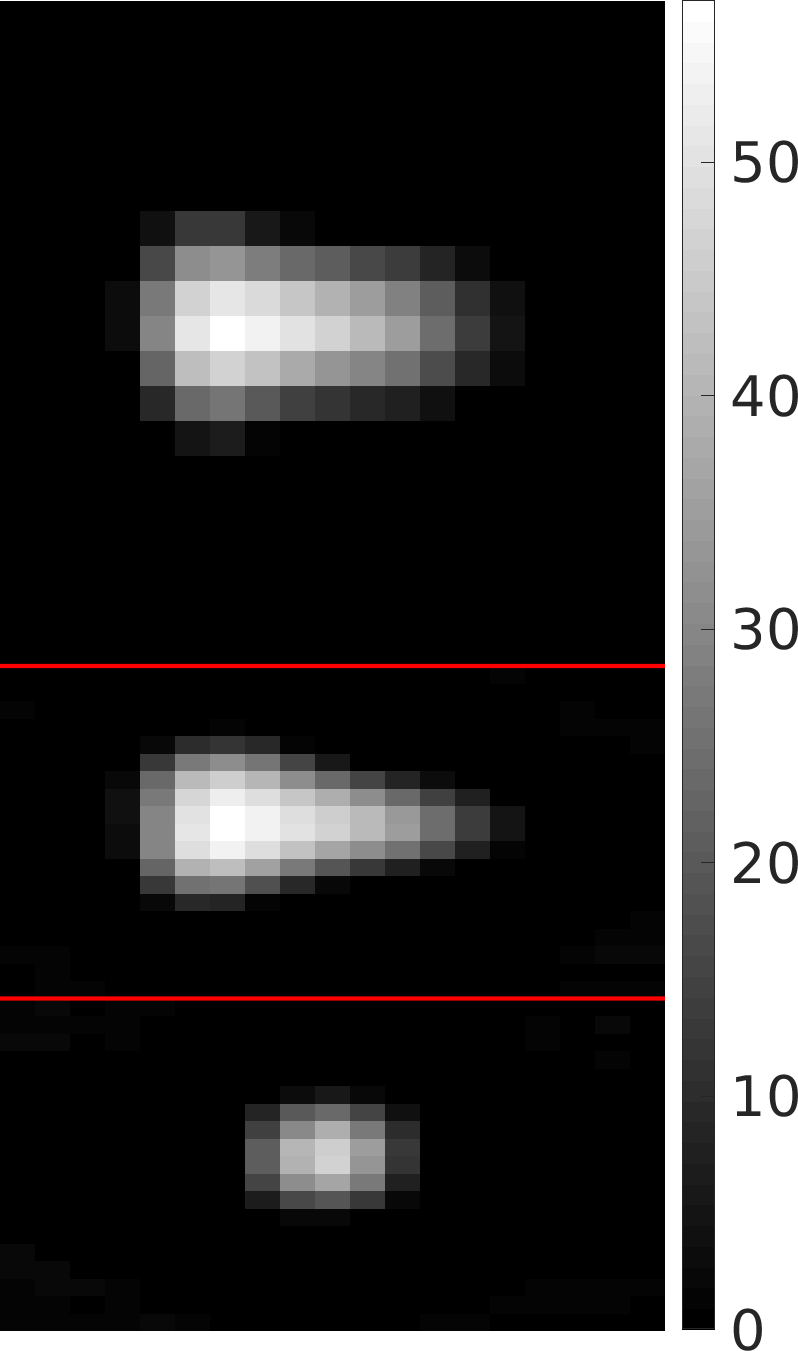}&
 \includegraphics[width=0.13\textwidth]{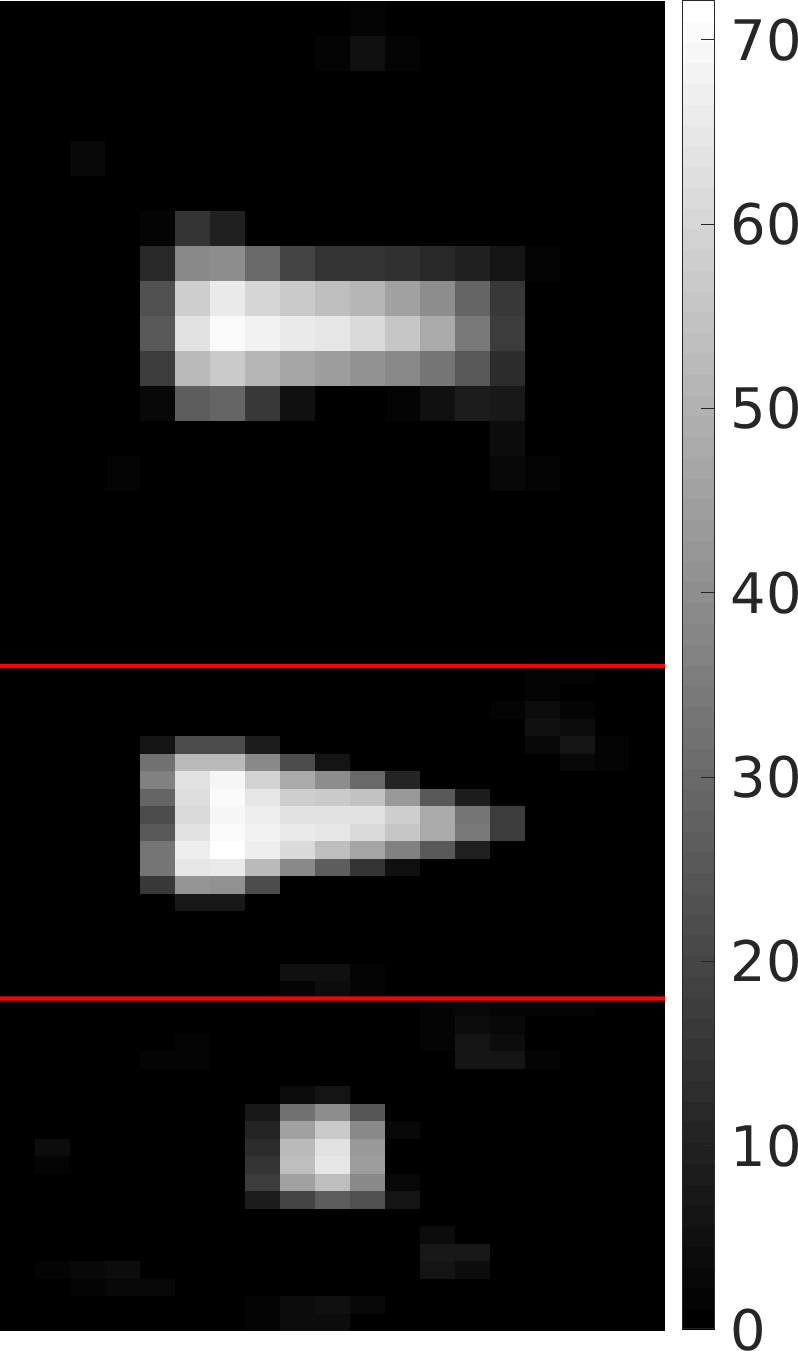}&
 \includegraphics[width=0.13\textwidth]{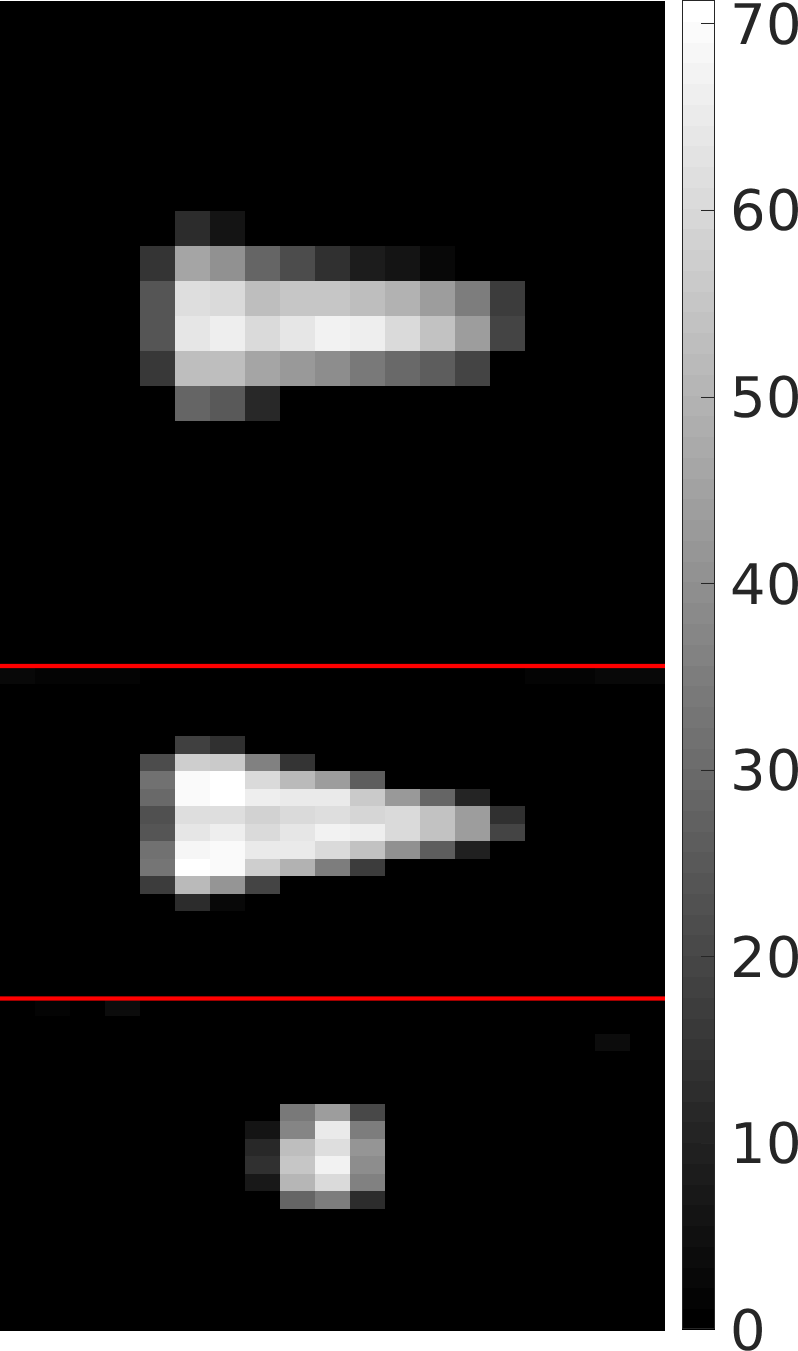}&
 \includegraphics[width=0.13\textwidth]{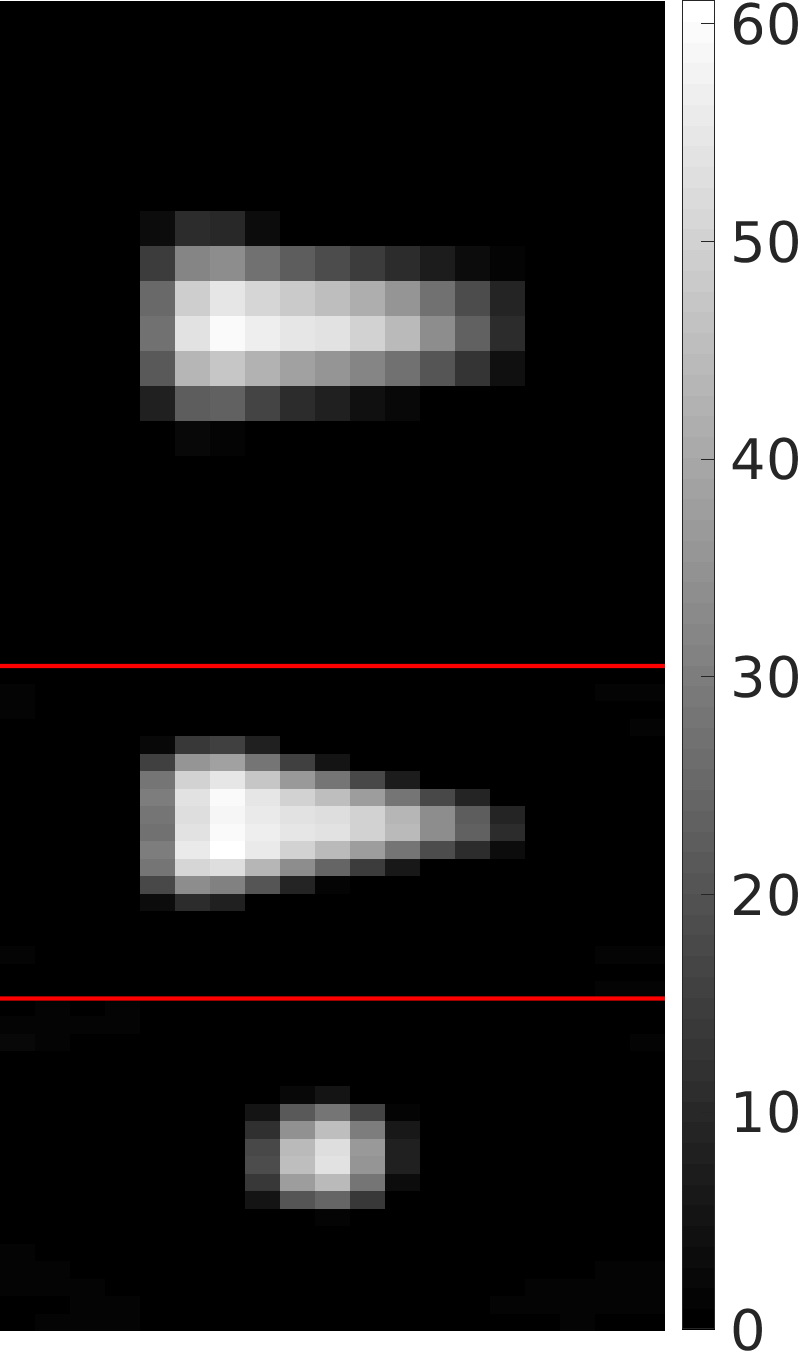}\\
\hline
\multicolumn{3}{l|}{$k=500$} & \multicolumn{3}{l}{}\\
$\alpha_{i^\ast}=12.5 \PLH 10^{0}$ & $\alpha_{i^\ast}=2.44 \PLH 10^{-2}$ &$\alpha_{i^\ast}=1.56 \PLH 10^{0}$ & $\alpha_{i^\ast}=9.77 \PLH 10^{-2}$& $\alpha_{i^\ast}=1.22 \PLH 10^{-2}$ & $\alpha_{i^\ast}=7.27 \PLH 10^{-10}$ \\
 \includegraphics[width=0.13\textwidth]{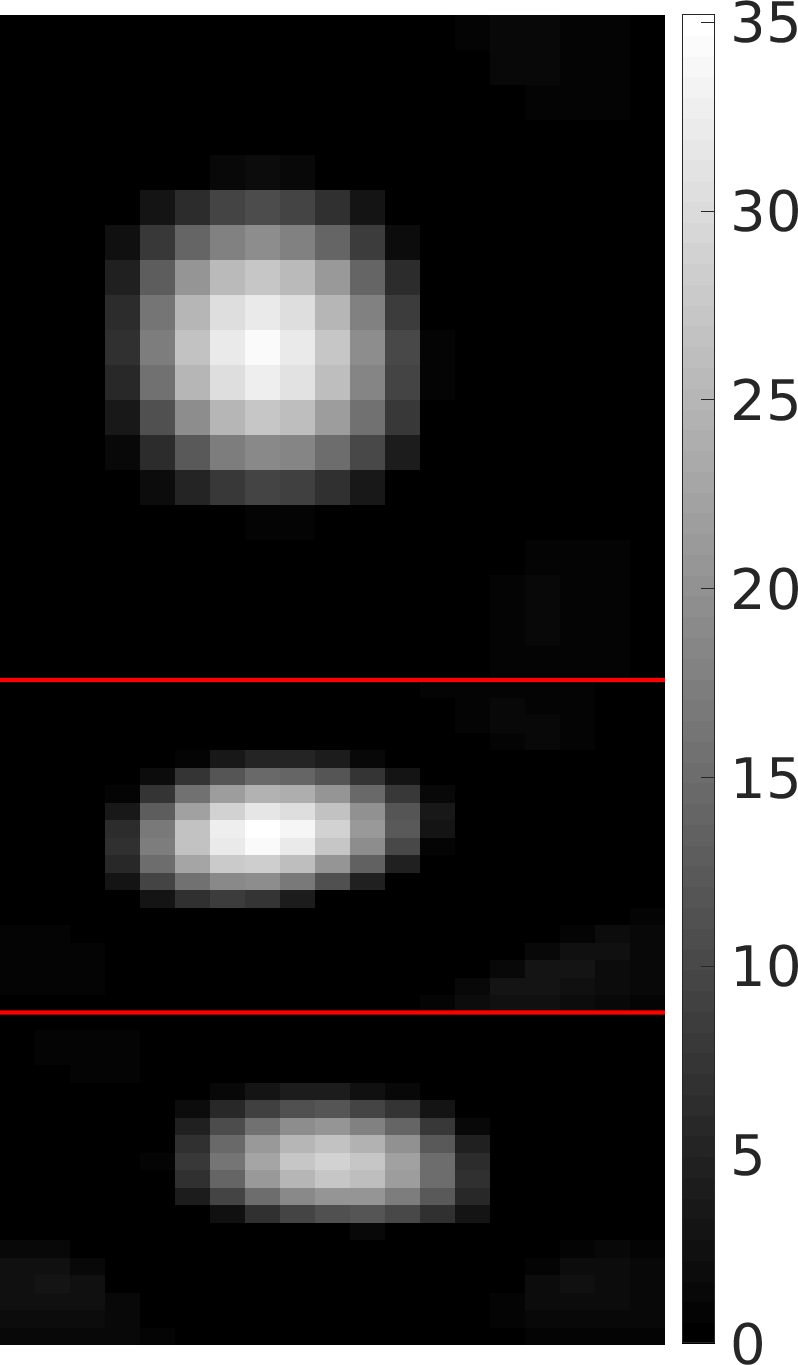}&
 \includegraphics[width=0.13\textwidth]{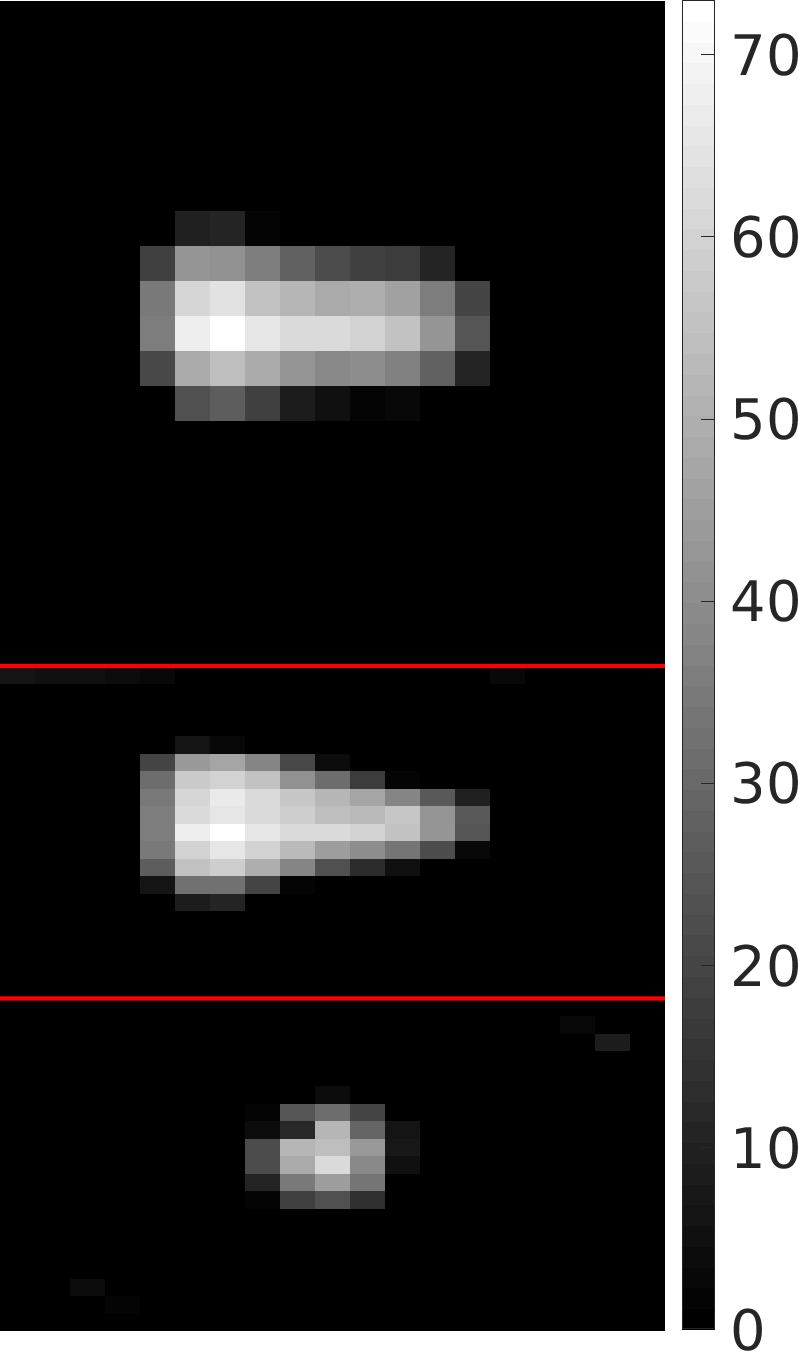}&
 \includegraphics[width=0.13\textwidth]{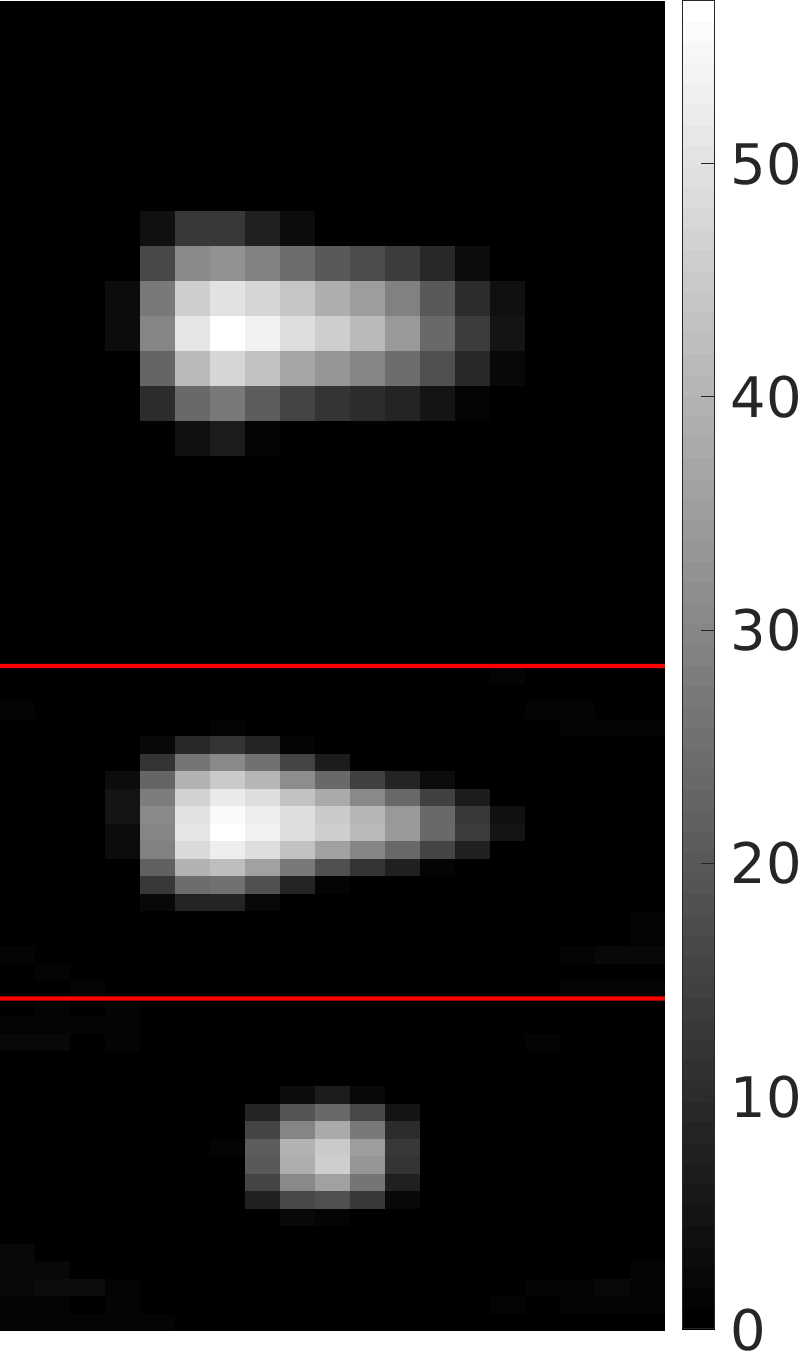}&
  \includegraphics[width=0.13\textwidth]{figures/plots_open_MPI_no_snr_thr/shapePhantom/use_cpp0/dim500/method6_alpha11.png}&
 \includegraphics[width=0.13\textwidth]{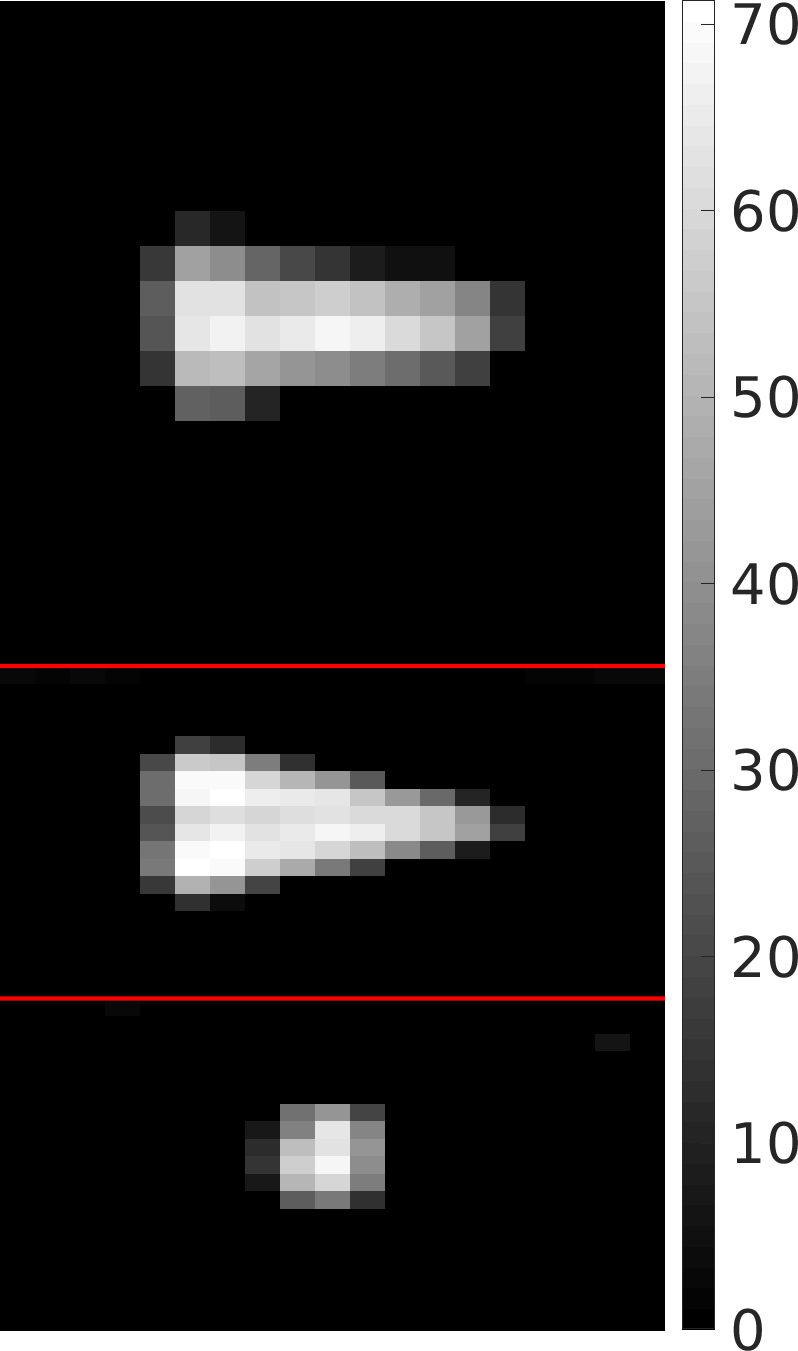}&
\includegraphics[width=0.13\textwidth]{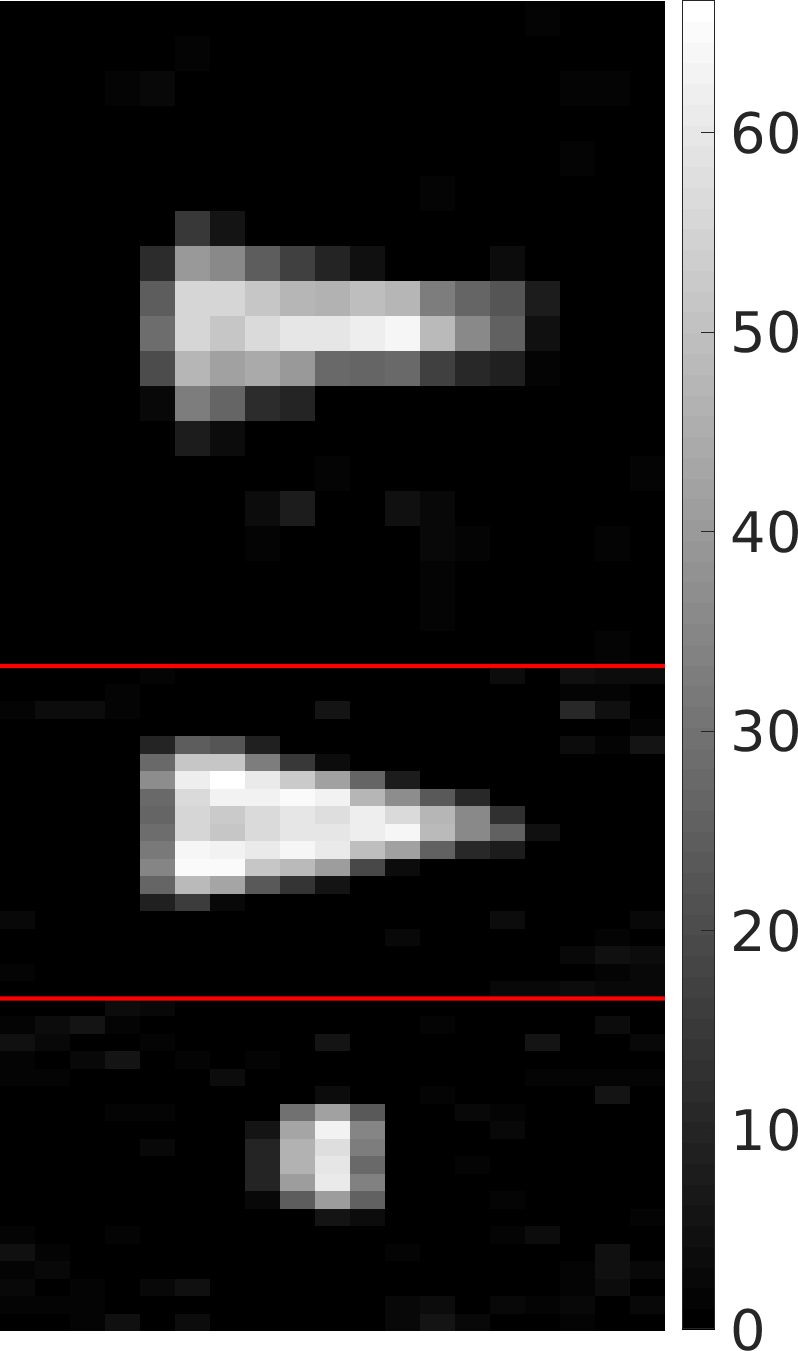}
\end{tabular}
\caption{$\alpha_{i^\ast}$ chosen according to the quasi-optimality principle in \eqref{eqn:quasi_optimality}. Concentration
in mmol/l. }
\label{fig:quasi_optimality}
\end{figure}

\begin{figure}
\begin{tabular}{ccc|ccc}
\multicolumn{3}{c|}{Non-whitened} & \multicolumn{3}{c}{Whitened}\\
\hline
SNR & rSVD1 & rSVD2 & SNR & rSVD1 & rSVD2 \\
\hline
\multicolumn{3}{l|}{$k=1000$} & \multicolumn{3}{l}{}\\
$\alpha_{i^\ast}=2.44 \PLH 10^{-2}$ & $\alpha_{i^\ast}=1.56 \PLH 10^{0}$ &$\alpha_{i^\ast}=25.0 \PLH 10^{0}$ & $\alpha_{i^\ast}=4.88 \PLH 10^{-2}$& $\alpha_{i^\ast}=1.56 \PLH 10^{0}$ & $\alpha_{i^\ast}= 25.0 \PLH 10^{0}$ \\
\includegraphics[width=0.13\textwidth]{figures/plots_open_MPI_no_snr_thr/shapePhantom/use_cpp0/dim1000/method12_alpha13.png}&
 \includegraphics[width=0.13\textwidth]{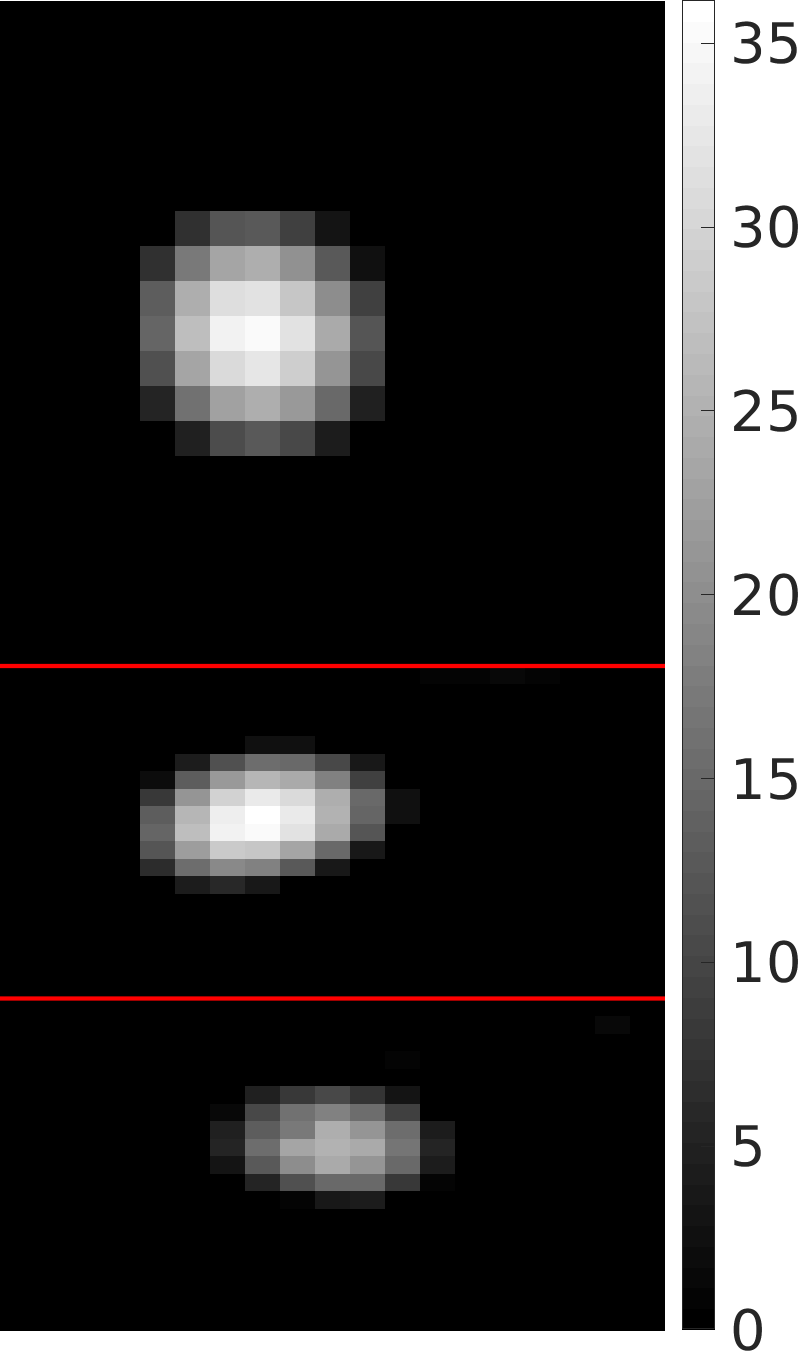}&
  \includegraphics[width=0.13\textwidth]{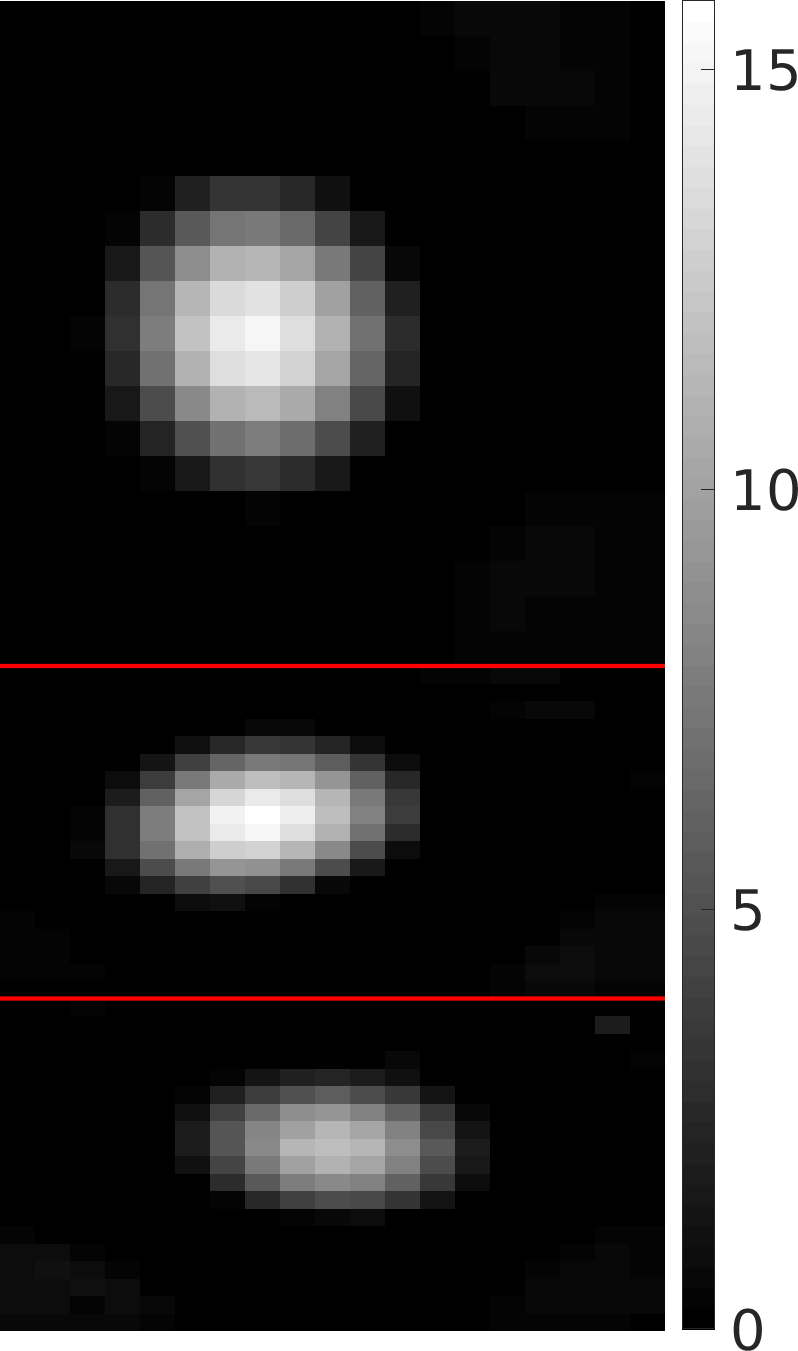}&
 \includegraphics[width=0.13\textwidth]{figures/plots_open_MPI_no_snr_thr/shapePhantom/use_cpp0/dim1000/method6_alpha12.png}&
 \includegraphics[width=0.13\textwidth]{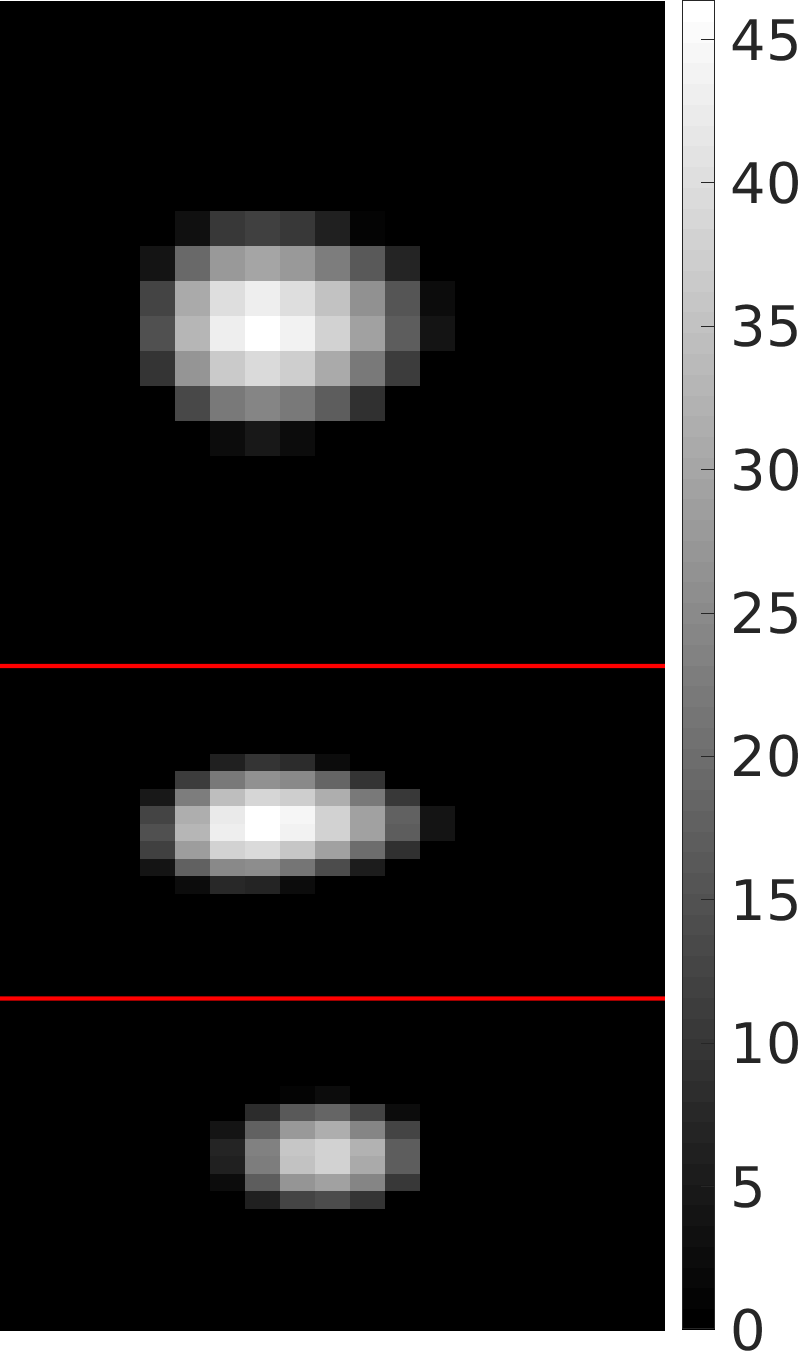}&
 \includegraphics[width=0.13\textwidth]{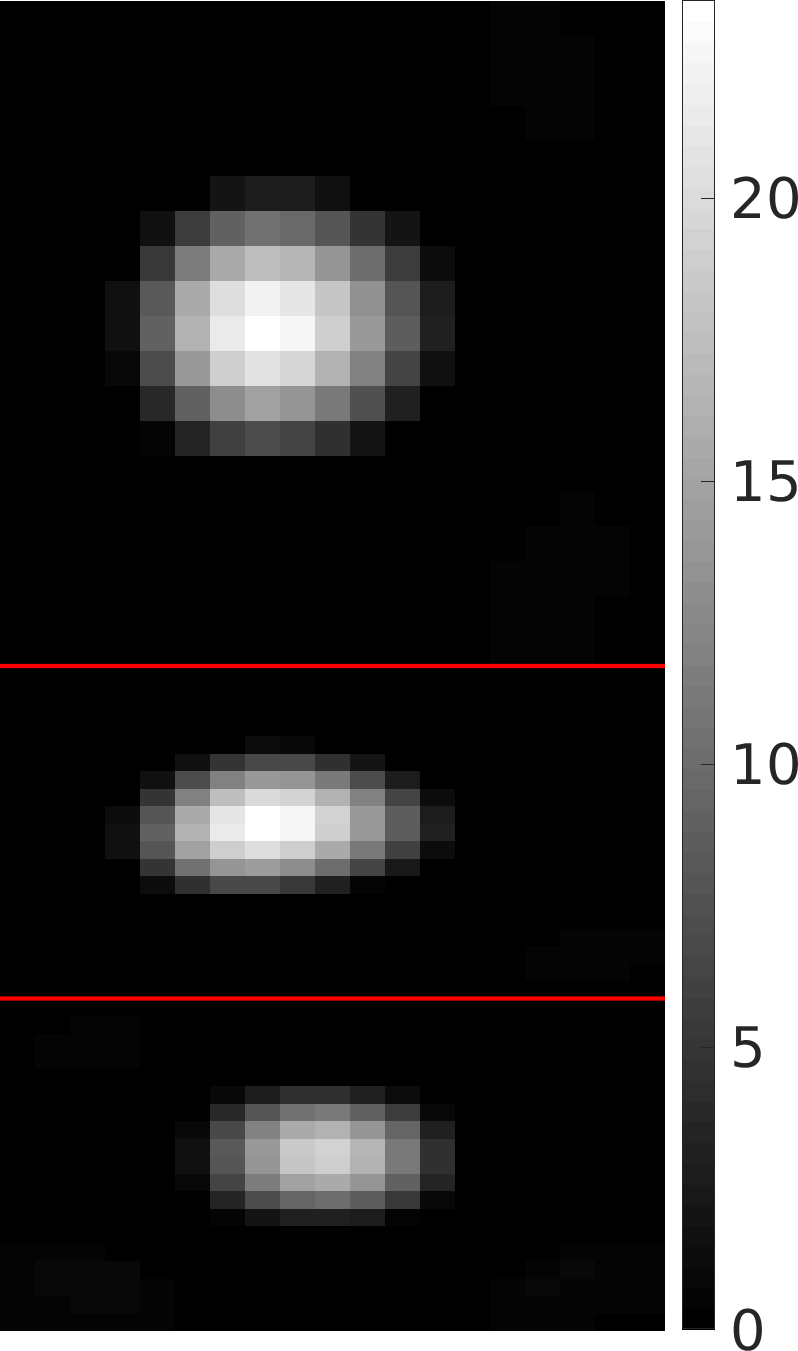}\\
\hline
\multicolumn{3}{l|}{$k=500$} & \multicolumn{3}{l}{}\\
$\alpha_{i^\ast}=4.88 \PLH 10^{-2}$ & $\alpha_{i^\ast}=1.56 \PLH 10^{0}$ &$\alpha_{i^\ast}=25.0 \PLH 10^{0}$ & $\alpha_{i^\ast}=9.77 \PLH 10^{-2}$& $\alpha_{i^\ast}=1.56 \PLH 10^{0}$ & $\alpha_{i^\ast}= 25.0 \PLH 10^{0}$ \\
 \includegraphics[width=0.13\textwidth]{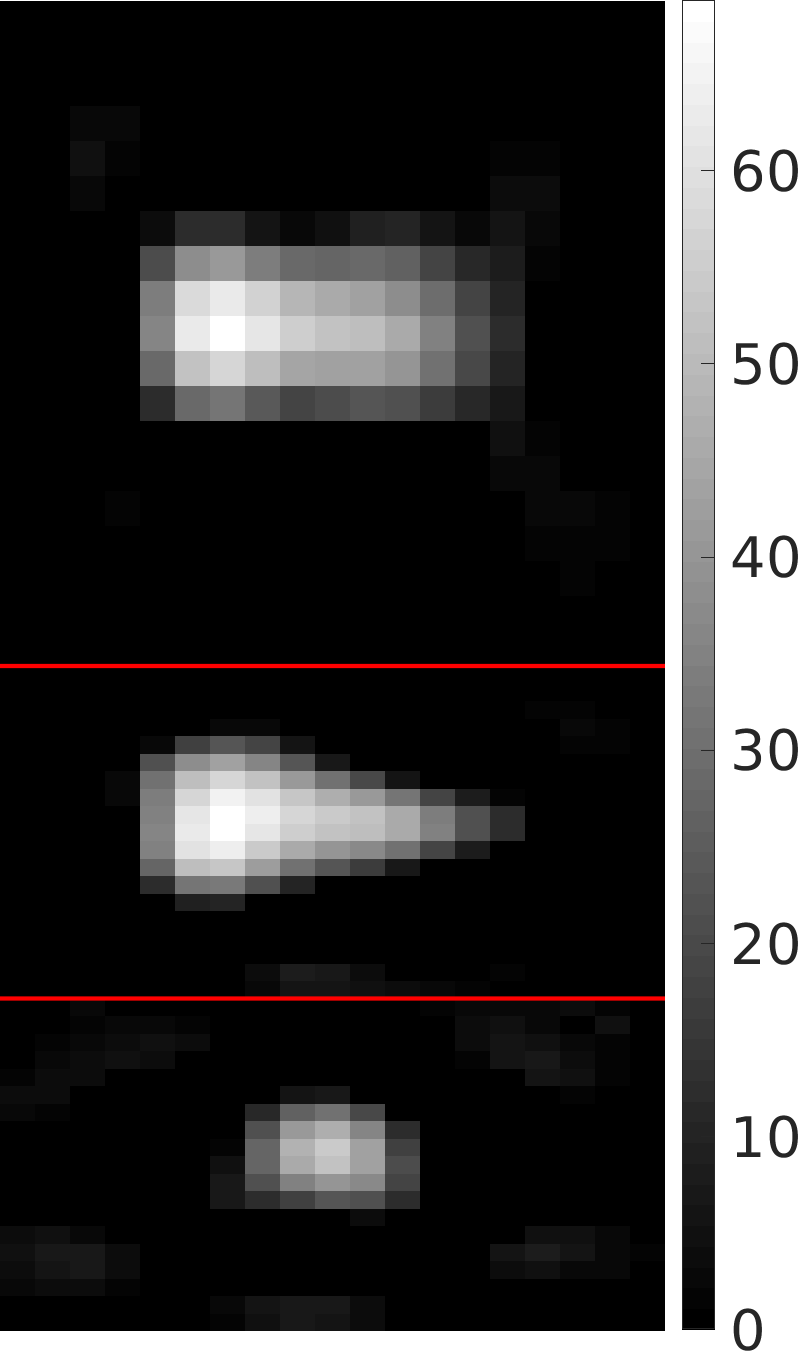}&
 \includegraphics[width=0.13\textwidth]{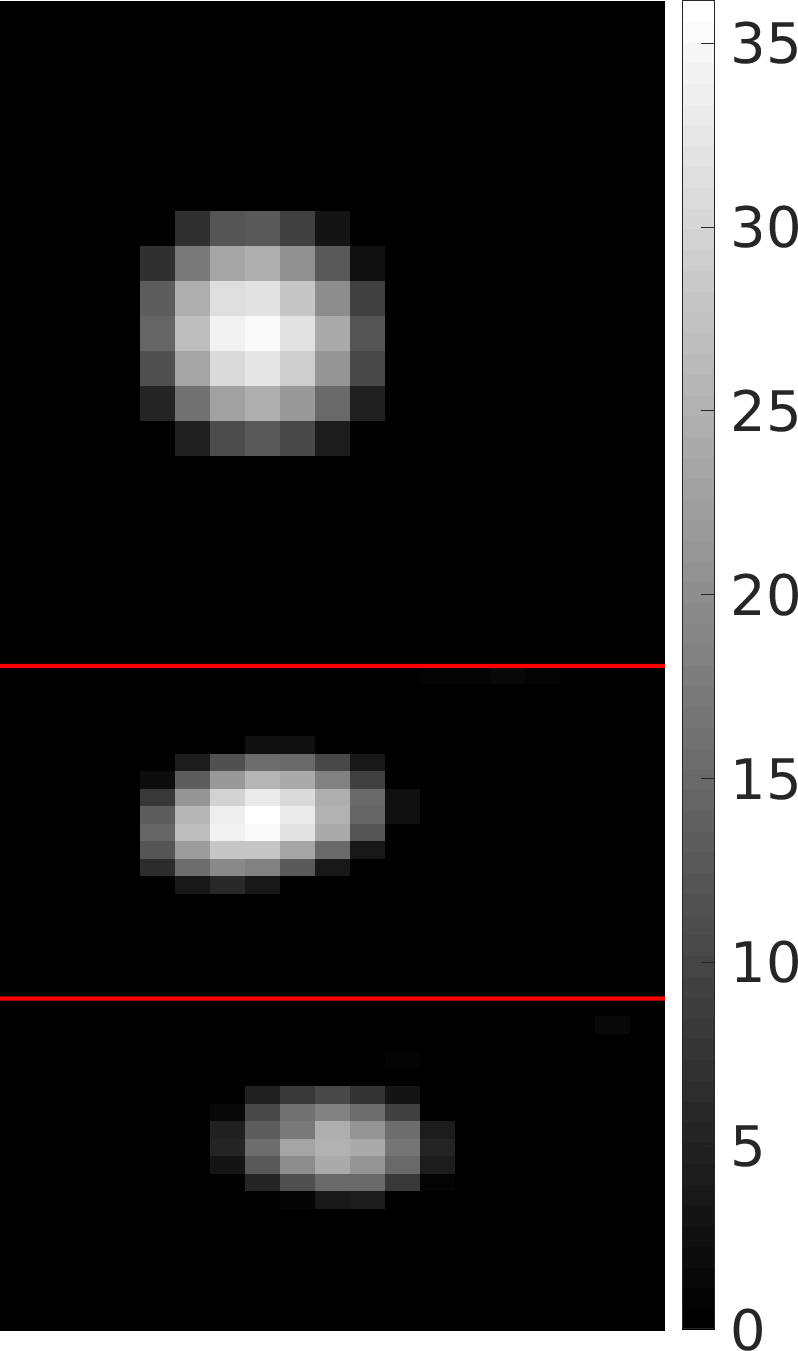}&
 \includegraphics[width=0.13\textwidth]{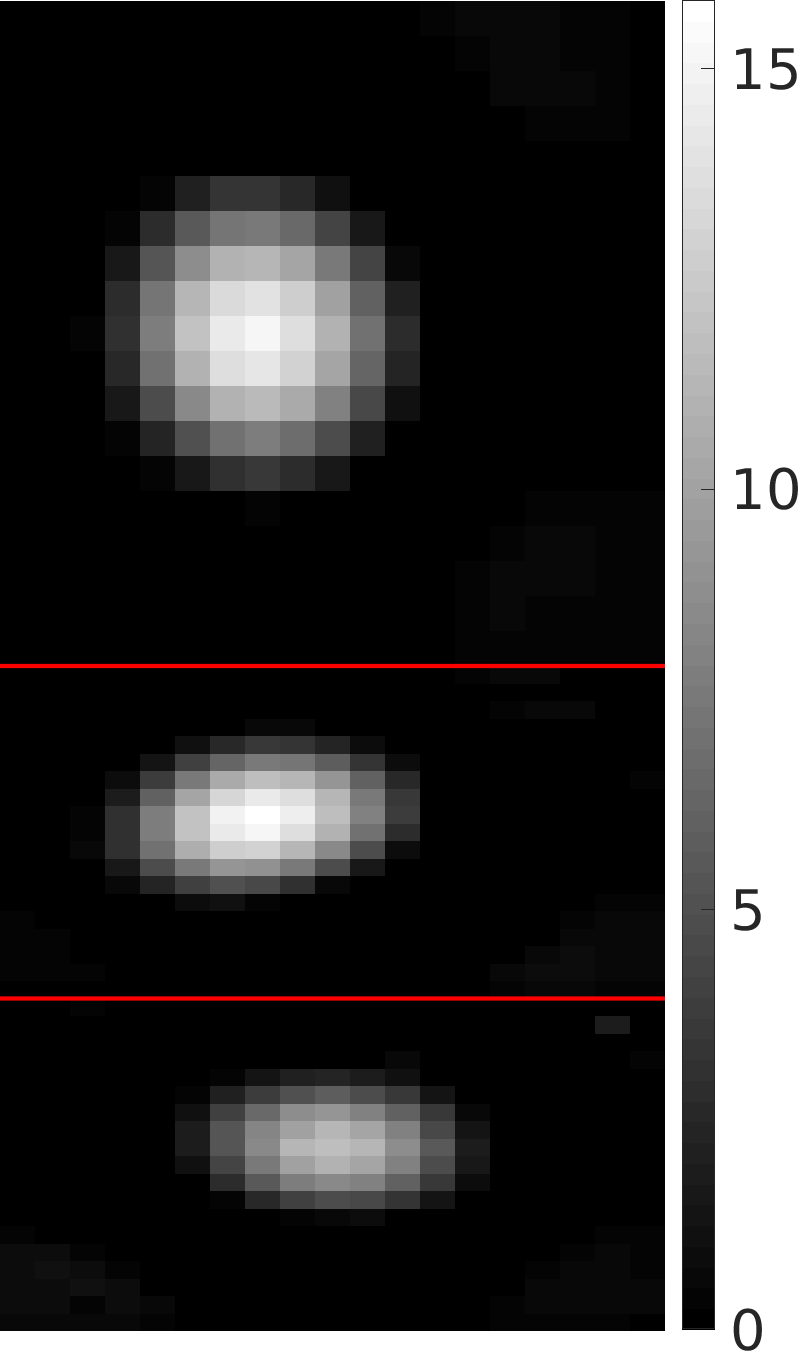}&
 \includegraphics[width=0.13\textwidth]{figures/plots_open_MPI_no_snr_thr/shapePhantom/use_cpp0/dim500/method6_alpha11.png}&
 \includegraphics[width=0.13\textwidth]{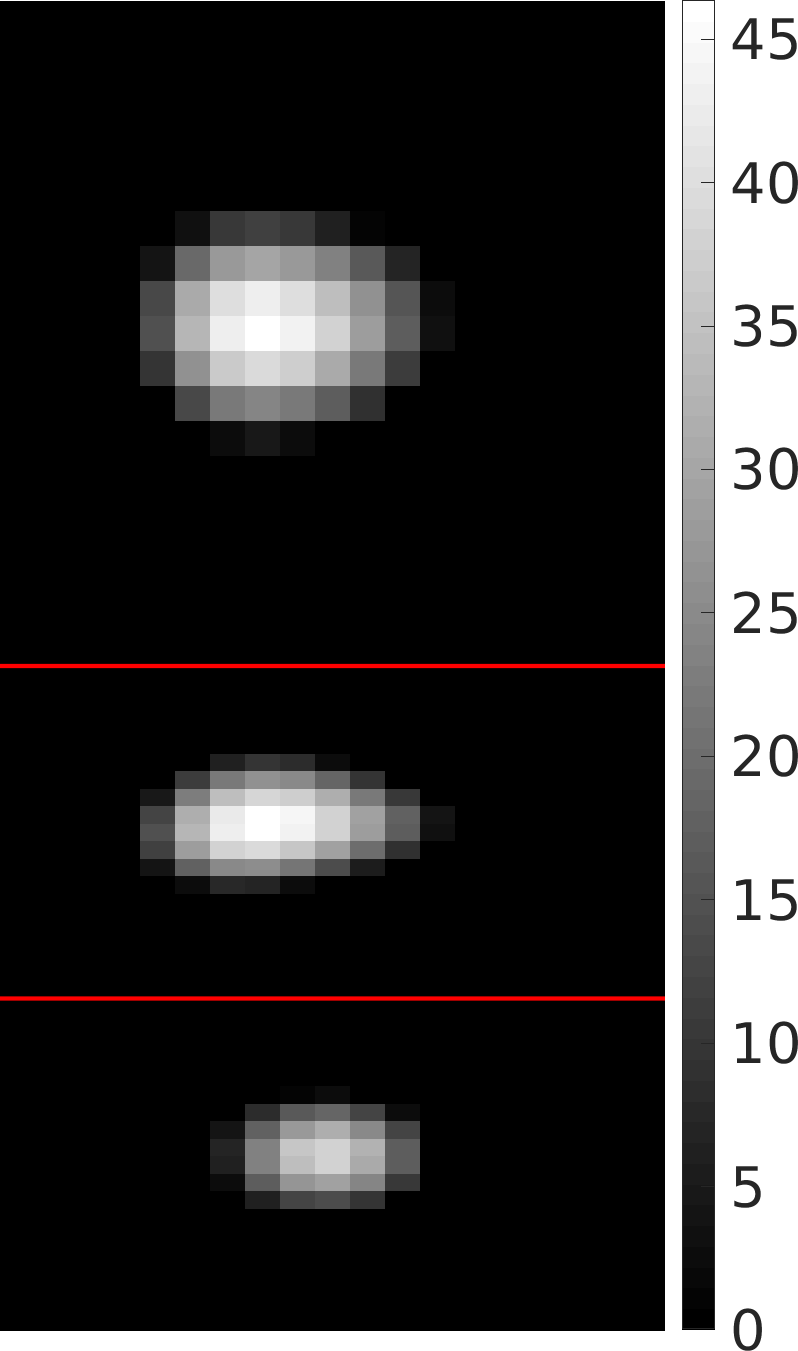}&
 \includegraphics[width=0.13\textwidth]{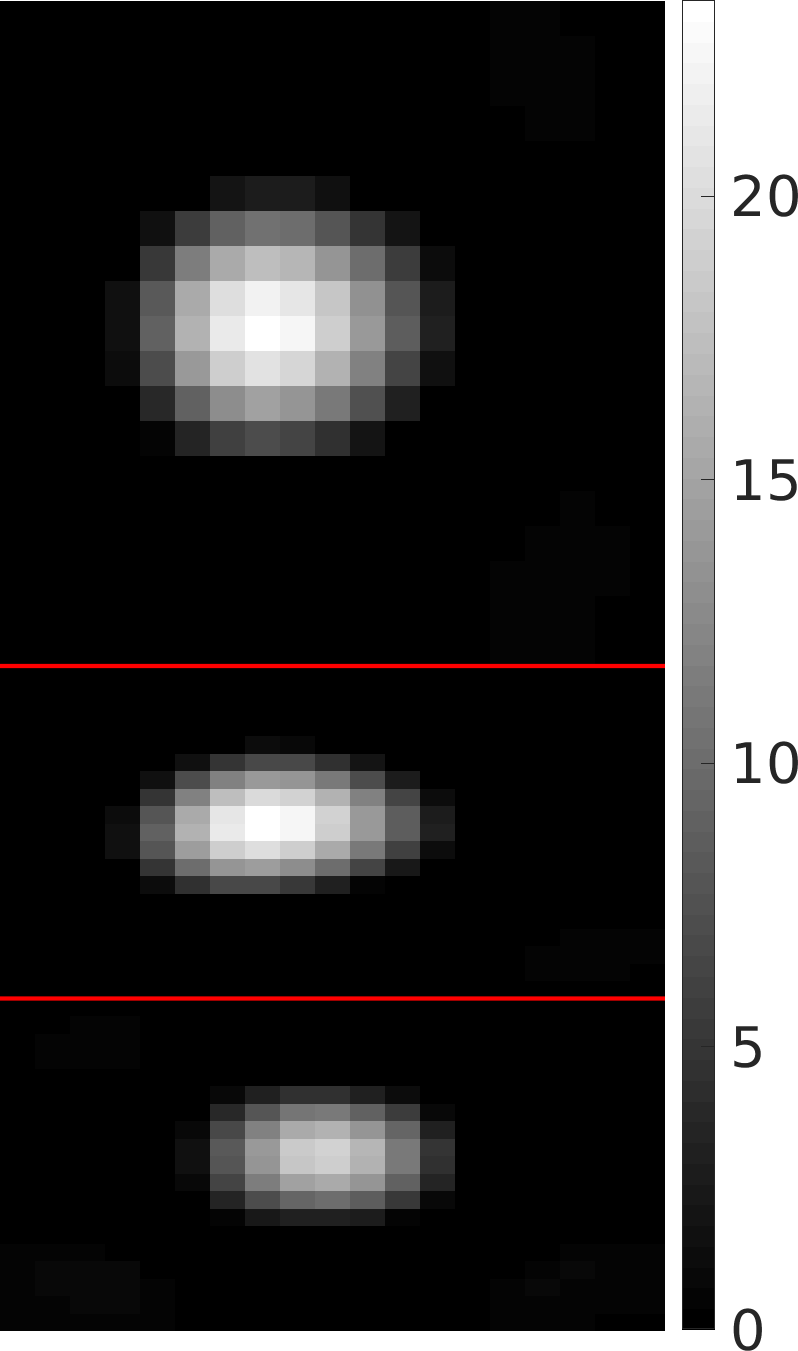}\\
\end{tabular}
\caption{$\alpha_{i^\ast}$ being the regularization parameter with minimal residual which is in line with the discrepancy
principle in \eqref{eqn:discrepancy} for an appropriate choice of $\tau\delta +\sigma \epsilon$. Concentration
in mmol/l.} 
\label{fig:discrepancy_principle}
\end{figure}

\section{Concluding remarks}\label{sec:conc}

In this work we have discussed several numerical issues in the MPI reconstruction from the perspective of modern
inverse theory. First, we propose to include a whitening strategy and to solve a generalized least squares problem
that is adapted to the noise statistics. This step can significantly improve the robustness of the algorithm and its
accuracy. Second, we propose a dimension reduction strategy in the data space via randomized
SVD. The randomized SVD is computationally efficient and scales to very large matrices. This step can greatly
reduce the number of equations, and arrive at an accurate reduced system. The numerical results show that by combining
whitening and low-rank approximation, one can obtain reconstructions of similar quality compared to the benchmark
approach, but at a much lower computational complexity, and meanwhile can improve the image quality when compared
with alternative system reduction approaches like
SNR. Third, we described two parameter choice rules, i.e., discrepancy principle and quasi-optimality criterion,
and numerically studied their performance. It is found that that the choice rules can benefit from whitening, and
the quasi-optimality criterion allows a robust parameter choice with good reconstruction quality. These experimental
findings indicate that the algorithmic techniques can facilitate developing fast robust MPI reconstruction
algorithms.

This study has implications on several issues in the context of MPI reconstruction. The low-rank approximation
provides an alternative (and complementary) to the sparse approximation approaches for the forward map
\cite{Lampe_Fast_2012,knopp2015local,Schmiester2017}. In contrast to these works, the dimension reduction based
on randomized SVD does not rely on an \textit{a priori} choice of a basis for system representation, and
it is optimal with respect to the (weighted) Frobenius / spectral norm. In theory, the ill-posed nature of the
MPI inverse problem \cite{KluthJinLi:2017} allows a memory-efficient representation (i.e., low-rank approximation
with a small $k$) without significant loss of reconstruction quality, which is also fully confirmed by the numerical
results in Section \ref{sec:numer}. The proposed method also does not require an SNR-type quality measure, which
is computed from the noisy measured system matrix data and empty scanner measurements \cite{franke2017}. The
latter is utilized in the proposed method (with a sufficiently large number of repetitions) to obtain a
reasonable approximation of the covariance matrix $C$ for the whitening step. In contrast to the SNR-type
quality measure, the noise characteristic is incorporated via the proposed whitening strategy. The simple
nature of the used background measurement correction does not require additional empty scanner measurements
during the calibration, which can potentially prolong the calibration due to expensive additional robot movements
\cite{weizenecker2009three,Them2016}. Finally, the optimal dimension reduction builds the basis for developing efficient
online reconstructions. In this context the proposed method is advantageous since it allows more robust and
faster (due to possibly much more effective dimension reduction) image reconstruction when compared with the
online reconstruction approach based on the SNR-type quality measure proposed in \cite{Knopp_Online_2016}.


\section*{Acknowledgements}
T. Kluth acknowledges funding by the Deutsche Forschungsgemeinschaft (DFG, German Research Foundation) - project number 281474342/GRK2224/1 ``Pi$^3$ : Parameter Identification - Analysis, Algorithms, Applications''.

\bibliographystyle{abbrv}
\bibliography{literature}

\appendix
\section{Error estimate}\label{app:error}
Now we give an error estimate on the approximation $\tilde x_\alpha^\delta$ in a general setting.
Let $X$ be a Banach space, and $Y$ be a Hilbert space, and $A:X\rightarrow Y$ be a compact
linear operator. Consider the following inverse problem $Ax = y^\dag,$
where $x\in X$. Instead of the exact data $y^\dag=Ax^\dag$, corresponding to the exact solution $x^\dag$, we have
$y^\delta\in Y$ with an accuracy $\delta = \|y^\delta-y^\dag\|$. Let $\tilde A:X\to Y$
be an approximate forward map with $\epsilon = \|\tilde A-A\|$. Then we aim at finding an approximate
solution $\tilde x_\alpha^\delta$ by means of variational regularization \cite{ItoJin:2015}
\begin{equation*}
  \tilde J_\alpha (x) = \tfrac12\|\tilde Ax-y^\delta\|^2 + \alpha\psi(x),
\end{equation*}
where the functional $\psi:X\to \mathbb{R}_+\cap\{0\}$ is a convex, proper and lower-semicontinuous functional. The
common choice includes $\psi(x)=\frac12\|x\|^2$, $\psi(x)=\|x\|_{\ell^1}$ and $\psi(x)=|x|_{\rm TV}$ etc.
We denote by $x_\alpha^\delta$ a minimizer to $J_\alpha$ and by $\tilde x_\alpha^\delta$
a corresponding minimizer to $\tilde J_\alpha$ with a noisy operator $\tilde A$. By $x^\dag$,
we denote a minimum-$\psi$ solution of the equation $Ax=y^\dag$: $x^\dag = \arg\min_{x\in X: Ax=y^\dag}
\psi(x)$. It is easy to see that the derivation below remains valid for in the presence of nonnegativity
constraint, so long as the minimum-$\psi$ solution is feasible.

Let $\partial \psi(x)$ be the subdifferential of $\psi$ at $x$ \cite{EkelandTemam:1999}. For any
$\xi\in \partial \psi(x)$, we define the Bregman distance from $x$ to $x'$ with respect to $\xi$ by
\begin{equation*}
  d_{\xi}(x',x) = \psi(x')-\psi(x)-\langle \xi,x'-x\rangle.
\end{equation*}

Then we have the following error estimate on the approximation $\tilde x_\alpha^\delta$. It may serve as a
guideline for determining the accuracy of the constructed approximation $\tilde A$: the model error
$\epsilon=\|A-\tilde A\|$ should be comparable with data error $\delta$ in order not to compromise
the reconstruction accuracy. The proof is standard \cite{SchusterKlatenbacher:2012,ItoJin:2015} and it is 
given only for completeness.
\begin{theorem}\label{thm:error}
Assume that the exact solution $x^\dag$ fulfills the following source condition: there exists $w\in Y$ such
that $A^*w \in\partial \psi(x^\dag)$. Then for the minimizer $\tilde x_\alpha^\delta$ to the functional $\tilde J_\alpha$, there holds
\begin{equation*}
 d_{\xi}(\tilde x_\alpha^\delta,x^\dag)
  \leq  \alpha^{-1}(\epsilon\|x^\dag\|+\delta)^2  + \alpha\|w\|^2 + \epsilon \|w\|\|x^\dag-\tilde x_\alpha^\delta\|.
\end{equation*}
\end{theorem}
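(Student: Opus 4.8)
The plan is to exploit the minimizing property of $\tilde x_\alpha^\delta$ together with the source condition $\xi := A^*w \in \partial\psi(x^\dag)$, and to convert the resulting inequality into a bound on the Bregman distance by completing the square and then applying Young's inequality. First I would write down the defining inequality $\tilde J_\alpha(\tilde x_\alpha^\delta) \le \tilde J_\alpha(x^\dag)$ and estimate the comparison term on the right. Since $Ax^\dag = y^\dag$, the triangle inequality gives $\|\tilde Ax^\dag - y^\delta\| \le \|(\tilde A - A)x^\dag\| + \|Ax^\dag - y^\dag\| + \|y^\dag - y^\delta\| \le \epsilon\|x^\dag\| + \delta$. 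Abbreviating $s := \epsilon\|x^\dag\| + \delta$ and writing $r := \tilde A\tilde x_\alpha^\delta - y^\delta$ for the residual, this reduces the minimality inequality to $\tfrac12\|r\|^2 + \alpha\bigl(\psi(\tilde x_\alpha^\delta) - \psi(x^\dag)\bigr) \le \tfrac12 s^2$.

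Next I would introduce the Bregman distance. By its definition $\psi(\tilde x_\alpha^\delta) - \psi(x^\dag) = d_\xi(\tilde x_\alpha^\delta, x^\dag) + \langle \xi, \tilde x_\alpha^\delta - x^\dag\rangle$, so that controlling $d_\xi$ amounts to controlling the linear term $\langle \xi, \tilde x_\alpha^\delta - x^\dag\rangle = \langle w, A(\tilde x_\alpha^\delta - x^\dag)\rangle$, where I have used the source representation $\xi = A^*w$ and the adjoint. The key manipulation, and the step I expect to demand the most bookkeeping care, is to split the operator as $A = \tilde A + (A - \tilde A)$ so that the perturbation acts on the \emph{difference} $\tilde x_\alpha^\delta - x^\dag$ rather than on $\tilde x_\alpha^\delta$ alone; this is exactly what yields the term $\epsilon\|w\|\|x^\dag - \tilde x_\alpha^\delta\|$ appearing (unresolved) on the right-hand side of the claim. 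Using $\tilde A(\tilde x_\alpha^\delta - x^\dag) = r - (\tilde A x^\dag - y^\delta)$ and bounding each piece by Cauchy--Schwarz, together with $\|\tilde A x^\dag - y^\delta\| \le s$ and $\|(A - \tilde A)(\tilde x_\alpha^\delta - x^\dag)\| \le \epsilon\|x^\dag - \tilde x_\alpha^\delta\|$, produces $-\alpha\langle \xi, \tilde x_\alpha^\delta - x^\dag\rangle \le \alpha\|w\|\|r\| + \alpha\|w\| s + \alpha\epsilon\|w\|\|x^\dag - \tilde x_\alpha^\delta\|$.

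Finally I would close the estimate with two Young-type steps. Inserting the last bound gives $\alpha\, d_\xi(\tilde x_\alpha^\delta, x^\dag) \le \tfrac12 s^2 - \tfrac12\|r\|^2 + \alpha\|w\|\|r\| + \alpha\|w\| s + \alpha\epsilon\|w\|\|x^\dag - \tilde x_\alpha^\delta\|$. Completing the square in $\|r\|$ dissipates the residual, $-\tfrac12\|r\|^2 + \alpha\|w\|\|r\| \le \tfrac12\alpha^2\|w\|^2$, and applying $\alpha\|w\| s \le \tfrac12 s^2 + \tfrac12\alpha^2\|w\|^2$ handles the remaining cross term. Collecting the constants yields $\alpha\, d_\xi(\tilde x_\alpha^\delta, x^\dag) \le s^2 + \alpha^2\|w\|^2 + \alpha\epsilon\|w\|\|x^\dag - \tilde x_\alpha^\delta\|$, and dividing through by $\alpha$ gives the asserted inequality with $s = \epsilon\|x^\dag\| + \delta$. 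The whole argument is linear in the minimality inequality and uses only convexity of $\psi$ (to ensure $d_\xi \ge 0$ and that the Bregman identity holds), so it carries over verbatim to the nonnegativity-constrained case whenever $x^\dag$ is feasible, as noted before the theorem. I do not anticipate a genuine difficulty here; the only delicate point is the operator splitting, which determines whether $\epsilon$ multiplies $\|x^\dag - \tilde x_\alpha^\delta\|$ or the larger $\|\tilde x_\alpha^\delta\|$, and hence whether the final bound is of the stated sharp form.
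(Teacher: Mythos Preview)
Your proposal is correct and follows essentially the same approach as the paper: minimizing property, Bregman identity, source condition $\xi=A^*w$, the operator splitting $A=\tilde A+(A-\tilde A)$ applied to the difference $\tilde x_\alpha^\delta-x^\dag$, and Young's inequality. The only cosmetic difference is bookkeeping: you retain the residual $\|r\|^2=\|\tilde A\tilde x_\alpha^\delta-y^\delta\|^2$ on the left and complete the square in the scalar $\|r\|$, whereas the paper first expands $\|\tilde Ax^\dag-y^\delta\|^2$ via polarization to isolate $\tfrac12\|\tilde A(x^\dag-\tilde x_\alpha^\delta)\|^2$ on the left and absorbs the inner products $\langle \tilde Ax^\dag-y^\delta+\alpha w,\tilde A(x^\dag-\tilde x_\alpha^\delta)\rangle$ into that term; both routes yield identical constants.
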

\begin{proof}
By the minimizing property of $\tilde x_\alpha^\delta$, we obtain
\begin{equation*}
  \tfrac{1}{2}\|\tilde A\tilde x_\alpha^\delta-y^\delta\|^2 + \alpha
  d_{\xi}(\tilde x_\alpha^\delta,x^\dag) \leq \tfrac{1}{2}\|\tilde Ax^\dag-y^\delta\|^2-\alpha\langle \xi,\tilde x_\alpha^\delta-x^\dag\rangle.
\end{equation*}
Under the source condition, rearranging the inequality yields
\begin{align*}
\tfrac{1}{2}\|\tilde A\tilde x_\alpha^\delta-y^\delta\|^2 + \alpha d_{\xi}(\tilde x_\alpha^\delta,x^\dag) & \leq
\tfrac{1}{2}\|\tilde Ax^\dag-y^\delta\|^2 - \alpha\langle w,A(\tilde x_\alpha^\delta-x^\dag)\rangle.
\end{align*}
Next we rewrite the terms on the right hand side as
\begin{align*}
  \|\tilde Ax^\dag-y^\delta\|^2 
    & = \|\tilde A\tilde x_\alpha^\delta-y^\delta\|^2 + 2\langle \tilde A\tilde x_\alpha^\delta-y^\delta,\tilde A(x^\dag-\tilde x_\alpha^\delta)\rangle + \|\tilde A(x^
    \dag-\tilde x_\alpha^\delta)\|^2,\\
  \langle w,A(\tilde x_\alpha^\delta-x^\dag)\rangle & = \langle w,A_k(\tilde x_\alpha^\delta-x^\dag)\rangle +\langle w,(A-A_k)(\tilde x_\alpha^\delta-x^\dag)\rangle.
\end{align*}
Combining the last three estimates yields
\begin{align*}
   \tfrac{1}{2}\|\tilde A\tilde x_\alpha^\delta-y^\delta\|^2 + \alpha d_{\xi}(\tilde x_\alpha^\delta,x^\dag)
  \leq & \tfrac12\|\tilde A\tilde x_\alpha^\delta-y^\delta\|^2 + \langle\tilde A\tilde x_\alpha^\delta-y^\delta,\tilde A(x^\dagger-\tilde x_\alpha^\delta)\rangle + \tfrac{1}{2}\|\tilde A(x^\dagger-\tilde x_\alpha^\delta)\|^2\\
   &  +\alpha\langle w,\tilde A(x^\dag-\tilde x_\alpha^\delta)\rangle +\alpha\langle w,(A-\tilde A)(x^\dag-\tilde x_\alpha^\delta)\rangle\\
   =& \tfrac12\|\tilde A\tilde x_\alpha^\delta-y^\delta\|^2 + \langle\tilde Ax^\dag-y^\delta,\tilde A(x^\dagger-\tilde x_\alpha^\delta)\rangle -\tfrac{1}{2}\|\tilde A(x^\dagger-\tilde x_\alpha^\delta)\|^2\\
   &  +\alpha\langle w,\tilde A(x^\dagger-\tilde x_\alpha^\delta)\rangle +\alpha\langle w,(A-\tilde A)(x^\dag-\tilde x_\alpha^\delta)\rangle.
\end{align*}
Collecting the terms, we obtain
\begin{align*}
 \tfrac{1}{2}\|\tilde A(x^\dag-\tilde x_\alpha^\delta)\|^2 + \alpha d_{\xi}(\tilde x_\alpha^\delta,x^\dag)
  \leq  \langle\tilde Ax^\dag-y^\delta,\tilde A(x^\dagger-\tilde x_\alpha^\delta)\rangle +\alpha\langle w,\tilde A(x^\dagger-\tilde x_\alpha^\delta)\rangle +\alpha\langle w,(A-\tilde A)(x^\dag-\tilde x_\alpha^\delta)\rangle
\end{align*}
Thus, by means of Cauchy-Schwarz inequality and Young's inequality, 
\begin{equation*}
 \alpha d_{\xi}(\tilde x_\alpha^\delta,x^\dag)
  \leq  \|\tilde Ax^\dag-y^\delta\|^2 + \alpha^2\|w\|^2 + \alpha\|w\|\|A-\tilde A\| \|x^\dag-\tilde x_\alpha^\delta\|.
\end{equation*}
Meanwhile by the triangle inequality,
\begin{align*}
  \|\tilde Ax^\dag-y^\delta\| & \leq \|(\tilde A-A)x^\dag\| + \|Ax^\dag-y^\delta\| \\
   & \leq \|\tilde A-A\|\|x^\dag\| + \|y^\dagger-y^\delta\| \leq \epsilon \|x^\dag\| + \delta.
\end{align*}
Upon substituting the estimate, we obtain
\begin{equation*}
 d_{\xi}(\tilde x_\alpha^\delta,x^\dag)
  \leq  \alpha^{-1}(\epsilon\|x^\dag\|+\delta)^2  + \alpha\|w\|^2 + \epsilon \|w\|\|x^\dag-\tilde x_\alpha^\delta\|.
\end{equation*}
This completes the proof of the theorem.
\end{proof}
\begin{remark}
For the quadratic penalty $\psi(x)=\frac12\|x\|^2$, the associated Bregman distance $d_\xi(x',x)$ is
given by $d_\xi(x',x)=\frac12\|x'-x\|^2$, and thus the error estimate in Theorem \ref{thm:error} reduces to
\begin{equation*}
 \|\tilde x_\alpha^\delta-x^\dag\|^2
  \leq  2\alpha^{-1}(\epsilon\|x^\dag\|+\delta)^2  + 2\alpha\|w\|^2 + 2\epsilon \|w\|\|x^\dag-\tilde x_\alpha^\delta\|,
\end{equation*}
which together with Young's inequality yields
\begin{equation*}
  \|\tilde x_\alpha^\delta-x^\dag\|^2
  \leq  4\alpha^{-1}(\epsilon\|x^\dag\|+\delta)^2  + 4\alpha\|w\|^2 + 4\epsilon^2 \|w\|^2.
\end{equation*}
The estimate shows that roughly one should choose $\epsilon:=\|A-\tilde A\| $ such that $\epsilon \|x^\dag\|\approx\delta$,
in order to ensure that the overall accuracy is not compromised. This directly gives a guiding principle for
constructing the low-rank approximation $\tilde A$ in Section \ref{ssec:rsvd}. Note that the last term in the estimate
is generally of high order, and the first two terms essentially determines the accuracy.
\end{remark}
\end{document}